\begin{document}

\newcommand{\corr}[2]{{\color{red}\sout{#1}} {\color{blue} {#2} \color{black}}}
\newcommand{\primozcomment}[1] {\color{magenta} ***  #1 *** \color{black}}
\newcommand{\roggicomment}[1] {\color{green} ***  #1 *** \color{black}}
\newcommand{\wcomment}[1] {\color{orange} ***  #1 *** \color{black}}

\newtheorem{theorem}{Theorem}[section]
\newtheorem{lemma}[theorem]{Lemma}
\newtheorem{corollary}[theorem]{Corollary}
\newtheorem{question}[theorem]{Question}
\newtheorem{proposition}[theorem]{Proposition}
\theoremstyle{definition}
\newtheorem{definition}[theorem]{Definition}
\newtheorem{example}[theorem]{Example}
\newtheorem*{remark}{Remark}

\newtheorem{thmx}{Theorem}
\renewcommand{\thethmx}{\Alph{thmx}} 

\newcommand{\Z}{\mbox{${\bf Z}$}}
\newcommand{\Q}{\mbox{${\bf Q}$}}
\newcommand{\N}{\mbox{${\bf N}$}}
\newcommand{\elt}{\mbox{$v \in G$}}
\newcommand{\elg}{\mbox{$\phi \in \Gamma$}}
\newcommand{\gstab}{\mbox{$G_\alpha$}}
\newcommand{\caa}{\mbox{$C_1(g)$}}
\newcommand{\cia}{\mbox{$C_i(g)$}}
\newcommand{\ima}{\mbox{${\rm Im}$}}
\newcommand{\de}{\mbox{${\rm deg}$}}
\newcommand{\dist}{\mbox{${\rm dist}$}}
\newcommand{\wre}{\mbox{${\rm Wr\,}$}}
\newcommand{\res}{\mbox{$\mid$}}
\newcommand{\nin}{\mbox{$\ \not\in\ $}}
\newcommand{\fix}{\mbox{${\rm fix}$}}
\newcommand{\pr}{\mbox{${\rm pr}$}}
\newcommand{\sym}{\mbox{${\rm Sym}\;$}}
\newcommand{\aut}{\mbox{${\rm Aut}$}}
\newcommand{\auttn}[0]{\mbox{${\rm Aut}\;T_n$}}
\newcommand{\autx}[0]{\mbox{${\rm Aut}\;X$}}
\newcommand{\autga}[0]{\mbox{${\rm Aut}(\Gamma)$}}
\newcommand{\autde}[0]{\mbox{${\rm Aut}(\Delta)$}}
\newcommand{\out}{\mbox{${\rm out}$}}
\newcommand{\inn}{\mbox{${\rm in}$}}
\newcommand{\Al}{\mbox{${\rm Al}$}}
\newcommand{\Pl}{\mbox{${\rm Pl}$}}
\newcommand{\Fix}{\mbox{${\rm Fix}$}}
\newcommand{\rightquotient}{{\setminus}}

\newcommand{\ZZ}{\mathbb{Z}}
\newcommand{\RR}{\mathbb{R}}
\newcommand{\QQ}{\mathbb{Q}}
\newcommand{\NN}{\mathbb{N}}

\newcommand{\vK}{\vec{K}}
\newcommand{\vY}{\vec{Y}}
\newcommand{\vZZ}{\vec{\mathbb{Z}}}

\renewcommand{\div}{{\hbox { \rm div }}}
\renewcommand{\mod}{{\hbox { \rm mod }}}
\newcommand{\id}{\hbox{\rm id}}
\newcommand{\Aut}{\mathrm{Aut}}
\newcommand{\tG}{{\widetilde{G}}}
\newcommand{\tH}{{\widetilde{H}}}
\newcommand{\tHphi}{{\widetilde{H}}_\varphi}
\newcommand{\CDC}{\mathop{{\rm CDC}}}
\newcommand{\CDHC}{\mathop{{\rm CDHC}}}
\newcommand{\g}{\mathbf g}
\newcommand{\h}{\mathbf h}
\newcommand{\V}{\mathrm V}
\newcommand{\VX}{{\mathrm V}X}
\newcommand{\E}{\mathrm E}
\newcommand{\A}{\mathrm A}
\newcommand{\R}{\mathrm R}
\newcommand{\TFaut}{{\mathop{{\rm TF}}}}
\newcommand{\Sym}{\mathrm{Sym}}
\newcommand{\Star}{{\mathop{{\rm Star}}}}
\newcommand{\md}{\mbox{${\rm md}$}}
\newcommand{\im}{\mbox{${\rm im}$}}

\newcommand{\tdlc}{totally disconnected, locally compact }
\newcommand{\cgtdlc}{compactly generated, totally disconnected, locally compact }

\title{Cayley--Abels graphs and invariants of totally disconnected, locally compact groups}
 
\author{Arnbj\"org Soff\'ia \'Arnad\'ottir\footnote{Department of Combinatorics and Optimization, University of Waterloo,
200 University Avenue West, Waterloo, ON, Canada  N2L 3G1.  e-mail:  {\tt soffia.arnadottir@uwaterloo.ca}
},
Waltraud Lederle\footnote{IRMP, Universit\'e Catholique de Louvain, 
Chemin du Cyclotron 2, 1348 Louvain-la-Neuve, Belgium.  e-mail:  {\tt waltraud.lederle@uclouvain.be}}\\
and R\"ognvaldur G. M\"oller\footnote{Science Institute, University of Iceland, IS-107 Reykjav\'ik, Iceland.  e-mail: {\tt roggi@raunvis.hi.is}}}


\maketitle

\unmarkedfntext{The second named author was supported by the Early Postdoc.Mobility scholarship number 175106 from the Swiss National Science Foundation. Part of this work was done when she was visiting The University of Newcastle with the International Visitor Program of the Sydney Mathematical Research Institute.}

\begin{abstract}
A connected, locally finite graph $\Gamma$ is a Cayley--Abels graph for a totally disconnected, locally compact group $G$ if $G$ acts vertex-transitively with compact, open vertex stabilizers on $\Gamma$. Define the minimal degree of $G$ as the minimal degree of a Cayley--Abels graph of $G$.  We relate the minimal degree in various ways to the modular function, the scale function and the structure of compact open subgroups. As an application, we prove that if $T_d$ denotes the $d$-regular tree, then the minimal degree of $\aut(T_d)$ is $d$ for all $d\geq 2$.
\end{abstract}


\section*{Introduction}

Let $G$ be a totally disconnected, locally compact group.  A locally finite, connected graph $\Gamma$
on which $G$ acts vertex-transitively with compact, open vertex stabilizers is called a Cayley--Abels graph for $G$. It was introduced by Abels \cite{Abels1974} in the context of Specker compactifications of locally compact groups. A totally disconnected, locally finite group has a Cayley--Abels graph if and only if it is compactly generated. The relation between $G$ and a Cayley--Abels graph $\Gamma$ is in many ways similar to the relation between a finitely generated group and its Cayley graph with respect to a finite generating set, see \cite{KronMoller2008}. In this work we are interested in the minimal degree $\md(G)$ of a Cayley--Abels graph for $G$.   The following general questions seem natural starting points for an investigation of this invariant:
\begin{enumerate}
\item What does the minimal degree tell us about the group?
\item How does the minimal degree relate to other properties of the group?
\item What is the minimal degree for some interesting groups?
\end{enumerate}

In this work the authors tackle all of these questions, at least to some extent.  

For small values of $d$, we can give a characterization - the first two parts of the following theorem are obvious facts, the third is a part of Theorem \ref{T2ends}.

\begin{thmx}
 {\em Let $G$ be a compactly generated, totally disconnected, locally compact group.
 Then $\md(G)$ is
 \begin{enumerate}
     \item equal to $0$ if and only if $G$ is compact.
     \item never equal to $1$.
     \item equal to $2$ if and only if $G$ has a compact, open, normal subgroup $K$ such that $G/K$ is isomorphic to $\ZZ$ or the infinite dihedral group $D_\infty$.
 \end{enumerate}
 }
\end{thmx}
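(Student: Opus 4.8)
The plan is to handle the three claims separately, with essentially all of the work going into (3). For part (1), the only connected $0$-regular graph is a single vertex; a $G$-action on it is vertex-transitive with stabilizer all of $G$, so such a graph is a Cayley--Abels graph precisely when $G$ is compact, giving $\md(G)=0 \iff G$ compact. For part (2), the only connected $1$-regular graph is a single edge $K_2$; any vertex-transitive action on it has a vertex stabilizer of index at most $2$, which, being compact and open, forces $G$ to be compact and hence $\md(G)=0$. Thus $1$ is never attained.

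For part (3) I would use throughout the elementary fact that the only connected $2$-regular graphs are the finite cycles $C_n$ and the two-way infinite line $L$ (the graph with vertex set $\ZZ$ and edges between consecutive integers). For the forward direction, suppose $\md(G)=2$ and fix a $2$-regular Cayley--Abels graph $\Gamma$. If $\Gamma$ were a finite cycle, the compact open vertex stabilizer would have finite index, making $G$ compact and contradicting part (1); hence $\Gamma\cong L$. The action yields a homomorphism $\rho\colon G\to \aut(L)$, and $\aut(L)\cong D_\infty$. Set $K=\ker\rho$. Since an automorphism of $L$ fixing two adjacent vertices is the identity, $K$ is exactly the pointwise stabilizer of an edge, $K=G_v\cap G_{v+1}$, a finite intersection of compact open subgroups; therefore $K$ is compact and open, and it is normal as the kernel of $\rho$. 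By the first isomorphism theorem $G/K\cong\im\rho$, a vertex-transitive subgroup of $D_\infty$ (and $K$ open makes $G/K$ discrete, so this is an isomorphism of topological groups).

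The step requiring the most care — and the one I would flag as the main obstacle — is pinning down which subgroups $H\le D_\infty=\langle t,s\mid s^2,\,sts=t^{-1}\rangle$ act transitively on $\ZZ$, together with the verification above that $K$ is open (this is where local finiteness of $\Gamma$ is genuinely used). For the classification I would split on whether $H$ contains a reflection: if not, then $H\le\langle t\rangle\cong\ZZ$ and transitivity forces $H=\langle t\rangle\cong\ZZ$; if so, writing $H\cap\langle t\rangle=\langle t^k\rangle$ one finds $H\cong D_\infty$, and computing the orbit of a single vertex shows transitivity forces $k\in\{1,2\}$, so $H\cong D_\infty$ again. In either case $G/K\cong\ZZ$ or $D_\infty$, completing the forward direction.

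For the backward direction, suppose $K\trianglelefteq G$ is compact and open with $G/K\cong\ZZ$ or $D_\infty$. I would pull back the standard vertex-transitive action of $\ZZ$ (respectively $D_\infty$) on $L$ along the quotient map $G\to G/K$. A vertex stabilizer is then the preimage of a subgroup of order $1$ or $2$, hence a finite union of cosets of $K$, so it is compact and open; the action is transitive and $L$ is connected and $2$-regular, so $L$ is a Cayley--Abels graph of degree $2$ and $\md(G)\le 2$. Finally, since $G/K$ is infinite while $K$ is compact, $G$ is non-compact, so $\md(G)\ne 0$ by part (1) and $\md(G)\ne 1$ by part (2); therefore $\md(G)=2$.
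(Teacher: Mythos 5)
Your proof is correct, but for part (3) it takes a genuinely different route from the paper's. The paper obtains this part as a piece of the four-way equivalence in Theorem~\ref{T2ends}: from $\md(G)=2$ it concludes only that the integer line is a Cayley--Abels graph, hence that $G$ has exactly two ends, and then invokes Abels' structure theorem for two-ended groups (\cite[Satz 4.5]{Abels1974}, see also M\"oller--Seifter) to produce the surjection onto $\ZZ$ or $D_\infty$ with compact, open kernel; the converse is handled, much as in your proposal, by pulling back actions of $\ZZ$ and $D_\infty$ on the line (the paper uses the regular actions, e.g.\ $D_\infty$ acting on its Cayley graph with respect to two involutions, while your pullback of the standard action with order-$2$ vertex stabilizers works equally well). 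You instead prove the hard direction directly: identify the $2$-regular Cayley--Abels graph with the line $L$ (finite cycles being excluded by non-compactness), observe that the kernel $K$ of the action equals $G_v\cap G_{v+1}$, which is compact and open because an automorphism of $L$ fixing two adjacent vertices is trivial, and then classify the vertex-transitive subgroups of $\aut(L)\cong D_\infty$ by hand. What your approach buys is self-containedness: no end theory and no external structure theorem are needed, and every step is elementary. What the paper's approach buys is a stronger theorem: Theorem~\ref{T2ends} simultaneously establishes the equivalences with having precisely two ends and with possessing a co-compact discrete cyclic subgroup, situating the result in the classical line of Hopf and Wall for finitely generated groups, and it reuses machinery (quasi-isometry invariance of ends) already present in the paper. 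One cosmetic remark on your subgroup classification: once $H$ contains a reflection and $H\cap\langle t\rangle=\langle t^k\rangle$ with $k\geq 1$, the isomorphism $H\cong D_\infty$ holds for every such $k$; transitivity is needed only to exclude the case $k=0$, where $H$ has order $2$ (the constraint $k\in\{1,2\}$ is a consequence of transitivity, not an input to the isomorphism type).
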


 The characterization in the third part resembles the characterization of finitely generated groups with two ends. 
In a companion paper to this work, \cite{ArnadottirLederleMoller2020}, vertex-transitive group actions on cubic graphs with infinite vertex stabilizers are studied.  These results have consequences for groups with minimal degree $3$, e.g.~if $G$ is a compactly generated, totally disconnected, locally compact group with minimal degree 3 and does not have a compact, open, normal subgroup then $G$ is not uni\-scalar (i.e. there exists an element in $G$ that does not normalize a compact, open subgroup of $G$). 

In Section \ref{sec:discrete} we elaborate on groups having a compact, open, normal subgroup. We prove that this property can be detected from some minimal degree Cayley--Abels graph.

\begin{thmx}[Corollary \ref{cor:nearly discrete implies discrete action on a minimal CA graph}]
{\em Let $G$ be a compactly generated, totally disconnected, locally compact group.
 If $G$ has a compact, open, normal subgroup, then $G$ acts on some minimal degree Cayley--Abels graph with compact, open, normal kernel.}
\end{thmx}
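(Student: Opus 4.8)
The plan is to show that the minimal degree of $G$ is already attained by a Cayley--Abels graph whose vertex stabiliser contains a compact, open, normal subgroup; for such a graph the kernel of the action automatically contains that subgroup and is therefore compact, open and normal. Throughout I may assume $G$ is non-compact, since otherwise $\md(G)=0$ and the one-vertex graph already has kernel $G$.

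First I would fix a compact, open, normal subgroup $K\leq G$ and a Cayley--Abels graph $\Gamma$ realising the minimal degree $d=\md(G)$. Identifying the vertex set of $\Gamma$ with $G/U$, where $U=G_v$ is a vertex stabiliser, the degree $d$ equals the number of cosets $xU$ in the connection set $\hat{\mathcal C}=\bigcup_{xU\sim U}xU$, and $gU\sim hU$ in $\Gamma$ if and only if $h^{-1}g\in\hat{\mathcal C}$. Since $K$ is normal, $W:=KU=UK$ is again a compact, open subgroup, and $K\subseteq W$. I would then let $\Gamma'$ be the simple graph with vertex set $G/W$ obtained by projecting the edges of $\Gamma$ along the $G$-equivariant map $G/U\to G/W$ (discarding any that fall inside a single $W$-coset). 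This is a connected, locally finite graph on which $G$ acts vertex-transitively with compact, open stabiliser $W$, i.e.\ a Cayley--Abels graph for $G$.

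The heart of the argument is the claim that passing from $\Gamma$ to $\Gamma'$ does not increase the degree, and this is the step I expect to be the main obstacle: for a general intermediate compact open subgroup $U\leq W$ the projected graph can have strictly larger degree, because a single double coset $WxW$ may split into several left $W$-cosets. Normality of $K$ is exactly what rules this out. The adjacency condition for $\Gamma'$ is $h^{-1}g\in W\hat{\mathcal C}W=\bigcup_{xU\sim U}WxW$, and for each neighbour coset $xU$ one computes $WxW=KUxU=K\,(UxU)$; writing $UxU=\bigsqcup_j x_jU$ gives $WxW=\bigcup_j Kx_jU=\bigcup_j x_j(KU)=\bigcup_j x_jW$, where the last equality uses $Kx_j=x_jK$. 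Since $\hat{\mathcal C}$ is a union of double cosets, each $x_jU$ is again one of the original neighbour cosets, so every left $W$-coset occurring in $W\hat{\mathcal C}W$ has the form $xW$ for some neighbour coset $xU$. Hence the neighbours of the base vertex of $\Gamma'$ are indexed by a subset of the $d$ neighbours of $v$ in $\Gamma$, and $\deg\Gamma'\leq d$.

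Finally I would assemble the pieces. As $\Gamma'$ is a Cayley--Abels graph for $G$, its degree is at least $\md(G)=d$, so in fact $\deg\Gamma'=d$ and $\Gamma'$ is a minimal degree Cayley--Abels graph. The kernel of the action on $\Gamma'$ is $\bigcap_{g\in G}gWg^{-1}$, which contains $K$ because $K$ is normal and contained in $W$; being a closed subgroup of the compact group $W$ that contains the open subgroup $K$, this kernel is compact, open and normal, as required. (Equivalently, the construction shows $\md(G)=\md(G/K)$, the reverse inequality being immediate by lifting a minimal Cayley--Abels graph of the discrete group $G/K$ through the quotient $G\to G/K$.)
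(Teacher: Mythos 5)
Your proposal is correct and is essentially the paper's own argument rewritten in coset language: your graph $\Gamma'$ on $G/W$ with $W=KU$ is exactly the quotient $\Gamma/K$ of a minimal degree Cayley--Abels graph by the $K$-orbits, and your double-coset computation $WxW=K(UxU)=\bigcup_j x_jW$ is the coset-theoretic form of the paper's Lemma~\ref{lem:compact normal subgroups} (the degree of $\Gamma/N$ is at most that of $\Gamma$ for $N$ normal, precisely because normality makes the orbit partition homogeneous). From there both arguments conclude identically: minimality forces the degrees to agree, and the kernel of the action on the quotient contains $K$, hence is compact, open and normal.
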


Our investigation of the second question starts with a study of the modular function and its relationship to the action of a group on a Cayley--Abels graph, see Section~\ref{sec:modular}. 
Many of the results in Section \ref{sec:modular} are contained in the first authors Master's thesis, \cite{Arnadottir2016}.
We present a method to \lq\lq read\rq\rq\ the values of the modular function of the graph and generalize a theorem by Praeger to higher dimensions.
Further, we get a bound on the minimal degree based on the modular function of which the following is a special case.

\begin{thmx}[Theorem \ref{C-Adigraphs}]
\label{thm:image modular function cyclic}
{\em Let $G$ be a non-compact, compactly generated, totally disconnected, locally compact group. Assume the image of the modular function is generated by $p/q$, where $p$ and $q$ are co-prime, positive integers. Then $\md(G) \geq p+q$.}
\end{thmx}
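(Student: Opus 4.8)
The plan is to bound the degree of an \emph{arbitrary} Cayley--Abels graph $\Gamma$ for $G$ from below by $p+q$; taking the minimum over all such graphs then yields $\md(G)\ge p+q$. Fix such a $\Gamma$ of degree $d$ and a vertex $v$, and write $G_v$ for its (compact, open) stabiliser. The starting point is the formula relating the modular function to indices of vertex stabilisers developed in Section~\ref{sec:modular}: if $gv=u$ for adjacent vertices $u,v$, then $\Delta(g)=[G_v:G_v\cap G_u]/[G_u:G_v\cap G_u]$. Writing $c(x,y)$ for the size of the $G_x$-orbit of a neighbour $y$ of $x$ (equivalently $c(x,y)=[G_x:G_x\cap G_y]$), this reads $\Delta(g)=c(v,u)/c(u,v)$, and both $c(v,u),c(u,v)$ are integers in $\{1,\dots,d\}$ since they count neighbours.

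Next I would decompose the neighbours of $v$ into $G_v$-orbits $O_1,\dots,O_r$, with $|O_i|=n_i$ and $\sum_i n_i=d$. Choosing $u_i\in O_i$ and $g_i$ with $g_iv=u_i$, the value $\Delta(g_i)=n_i/m_i$ (where $m_i=c(u_i,v)$) depends only on the orbit, and equals $(p/q)^{k_i}$ for some $k_i\in\ZZ$ because it lies in $\im\Delta=\langle p/q\rangle$. Since $\Gamma$ is connected and the action is vertex-transitive, $G$ is generated by $G_v$ together with the $g_i$; as $\Delta$ is trivial on the compact group $G_v$, the exponents $k_1,\dots,k_r$ generate $\langle p/q\rangle$, forcing $\gcd_i k_i=1$ (the case $p=q=1$ being covered by Theorem~A). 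The crucial structural point is that the multiset $\{k_1,\dots,k_r\}$ is symmetric under negation: looking at the reverse edge, the $G_{u_i}$-orbit of $v$ has exponent $\Delta(g_i^{-1})=(p/q)^{-k_i}$, while vertex-transitivity (conjugating by $g_i$) together with the conjugation-invariance of $\Delta$ shows that the multiset of exponents at $u_i$ coincides with that at $v$; hence $-k_i\in\{k_1,\dots,k_r\}$.

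To finish, let $K=\max_i k_i$. Because the $k_i$ have gcd $1$ and are closed under negation, $K\ge 1$ and $-K$ also occurs among the exponents. The orbit with exponent $K$ satisfies $n_i/m_i=p^K/q^K$, a fraction already in lowest terms since $\gcd(p,q)=1$, so $p^K\mid n_i$ and thus $n_i\ge p^K\ge p$; symmetrically the orbit with exponent $-K$ has size $\ge q^K\ge q$. These are two distinct $G_v$-orbits (as $K\ne -K$), so $d\ge p^K+q^K\ge p+q$, as required. I expect the main obstacle to be the symmetry lemma for the exponent multiset --- pinning down that the reverse-edge exponent really is $-k_i$ and transporting the orbit data between $v$ and $u_i$ correctly --- together with the elementary but essential coprimality bookkeeping that turns the ratio $(p/q)^{\pm K}$ into the size lower bounds $p^K$ and $q^K$.
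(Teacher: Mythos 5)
Your proposal is correct and takes essentially the same route as the paper's proof of Theorem \ref{C-Adigraphs}: the arc-labelling of the Cayley--Abels graph by the modular function, the fact that the suborbit ratios at a vertex generate $\im(\Delta)$ (Corollary \ref{cor:generators of ImDelta}), and the key disjointness observation --- your negation-symmetric multiset of exponents is exactly the paper's pairing of the two disjoint sets $\beta_i G_\alpha$ and $(\alpha g_i^{-1})G_\alpha$ inside $\Gamma(\alpha)$ coming from a nontrivially labelled edge orbit. Your extraction of the maximal exponent $K$ and the divisibility bounds $n_i \geq p^K$, $n_j \geq q^K$ simply make explicit the coprimality bookkeeping that the paper leaves implicit in its $\min(A)$ formulation, and in fact yield the marginally sharper bound $d \geq p^K + q^K$.
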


Next is the relationship of the minimal degree with the local structure of the group (i.e.\ the structure of compact, open subgroups),  see Section~\ref{sec:composition}. 
We introduce a new local invariant, the {\em local simple content}, that is inspired by the local prime content introduced by Gl\"ockner \cite{Glockner2006}.
It is the set of all (isomorphism classes of) finite, simple groups that appear as a composition factor of every compact, open subgroup.  To treat this concept, a Jordan--H\"older Theorem for profinite groups is needed (Theorem~\ref{thm:JHprofinite}).

\begin{thmx}[Theorem \ref{thm:localsimplecontent}]
\label{thm:introduct lsc}
{\em Let $G$ be a compactly generated, totally disconnected, locally compact group. Assume $G$ does not have any compact, normal subgroup.
 The local simple content of $G$ is finite.
 Moreover, $\md(G)$ is strictly bigger than the smallest $n$ such that the symmetric group $S_n$ contains every element of the local simple content as subquotient.}
\end{thmx}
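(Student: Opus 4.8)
The plan is to extract everything from the action of $G$ on a single Cayley--Abels graph. Since $G$ is compactly generated it has a Cayley--Abels graph $\Gamma$; fix a vertex $v$ and write $U = G_v$ for its compact, open stabiliser, and let $d$ be the degree of $\Gamma$. First I would check that the $G$-action on $\Gamma$ is faithful: its kernel is a closed subgroup contained in the compact group $U$, hence compact, and it is normal, so it is trivial by hypothesis. For $r \ge 0$ let $U_r \le U$ be the pointwise stabiliser of the ball $B(v,r)$. Because $U$ fixes $v$ it preserves each sphere around $v$, so each $U_r$ is an open normal subgroup of $U$; faithfulness together with connectedness of $\Gamma$ gives $\bigcap_r U_r = 1$, so that $U_1 = \varprojlim U_1/U_r$.

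The combinatorial heart is an analysis of the layers $U_r/U_{r+1}$. An element of $U_r$ fixes $B(v,r)$ pointwise, so for each $u$ in the sphere $S(v,r)$ it permutes the set $N^+(u)$ of neighbours of $u$ lying in $S(v,r+1)$; restricting to these sets defines a homomorphism $U_r \to \prod_{u \in S(v,r)} \Sym(N^+(u))$ whose kernel is exactly $U_{r+1}$, since an element fixing every $N^+(u)$ pointwise fixes all of $S(v,r+1)$. Hence $U_r/U_{r+1}$ embeds into a direct product of symmetric groups. The decisive point is the bound on the degrees: for $r \ge 1$ every $u \in S(v,r)$ has at least one neighbour in $S(v,r-1)$, so $|N^+(u)| \le d-1$. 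Passing to subgroups and projecting onto the factors of a direct product then shows that every composition factor of $U_r/U_{r+1}$ is a subquotient of $S_{d-1}$ for all $r \ge 1$.

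Now I would feed this into the profinite Jordan--H\"older theorem (Theorem~\ref{thm:JHprofinite}). The composition factors of $U_1 = \varprojlim U_1/U_r$ are precisely those occurring in the successive quotients $U_r/U_{r+1}$ with $r \ge 1$, so by the previous paragraph each of them is a subquotient of $S_{d-1}$. Since $S_{d-1}$ has only finitely many simple subquotients, $U_1$ has only finitely many composition factors. As $U_1$ is itself a compact, open subgroup of $G$, the local simple content is contained in the set of composition factors of $U_1$; this yields at once the finiteness of the local simple content and the fact that every one of its members is a subquotient of $S_{d-1}$. By the minimality in the definition of $n$ we conclude $n \le d-1$, and since this holds for every Cayley--Abels graph of $G$, in particular for one of minimal degree, we obtain $\md(G) \ge n+1 > n$.

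I expect the main obstacle to be securing the \emph{strict} inequality rather than merely $\md(G) \ge n$. The naive choice $U = U_0$ fails, because the first layer $U_0/U_1$ acts on the full neighbourhood $S(v,1)$ of $v$ and only embeds into $S_d$; the trick is to work instead with the once-punctured stabiliser $U_1$, whose layers all involve a genuine backward neighbour and therefore only see $S_{d-1}$. A secondary technical point to get right is that a composition factor of a subgroup of a direct power of $S_{d-1}$ is a subquotient (not necessarily a composition factor) of $S_{d-1}$, which follows by an easy induction on the number of factors using the projections of a subdirect product.
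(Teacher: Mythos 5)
Your proof is correct, and it follows the same overall skeleton as the paper's proof of Theorem~\ref{thm:localsimplecontent} (itself modelled on Caprace--Reid--Willis): exhaust the graph, filter a compact open subgroup by pointwise stabilizers, bound the layers by symmetric groups of degree $d-1$ using the existence of a backward neighbour, and invoke the profinite Jordan--H\"older theorem (Theorem~\ref{thm:JHprofinite}). Within that skeleton you make two genuine departures. First, you add a whole sphere at a time, so each layer $U_r/U_{r+1}$ only embeds into a \emph{product} of symmetric groups of degree at most $d-1$, and you need your auxiliary observation that composition factors of subgroups of such products are subquotients of $S_{d-1}$; the paper instead adds one vertex's neighbourhood at a time, so each layer $G_{i-1}/G_i$ with $i\geq 2$ lands in a single copy of $S_{d-1}$ --- in fact in a point stabilizer of the local action $L\leq S_d$. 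This finer conclusion is not cosmetic: the paper uses it (that elements of the local simple content are subquotients of point stabilizers in $L$, hence that the local action of a putative degree-$4$ graph must be $S_3$ or $S_4$) in Claim~1 of the proof of Theorem~\ref{T5-regular}, and your sphere-based filtration does not directly recover it, though it fully suffices for the statement at hand. Second, your mechanism for strictness differs: you truncate the filtration at the once-punctured stabilizer $U_1$, so that \emph{every} layer has a backward neighbour, and then only use that each element of the local simple content is by definition a composition factor of the compact open subgroup $U_1$; the paper instead keeps the full series starting at $U=G_\alpha$ (whose first layer only sits in $S_d$) and discards that first layer via Lemma~\ref{lem:local simple content equivalences}, which says elements of the local simple content occur with infinite multiplicity and hence must appear in layers beyond the first. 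Your truncation trick is arguably more economical here, since it avoids the multiplicity argument entirely, at the modest cost of the subdirect-product lemma and of the sharper local-action information.
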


This is a refinement of a theorem about the local prime content by Caprace, Reid and Willis from \cite{CapraceReidWillis2017a}.
We relate their result to the values of the scale function: If $p$ is the largest prime that occurs as a factor in any value of the scale function then $\md(G)\geq p+1$, see Corollary \ref{cor:prime factor of scale function md}; this holds in particular for non-uniscalar $p$-adic Lie groups (Corollary \ref{cor:p adic Lie group md}).

The archetypal totally disconnected, locally compact groups are  automorphism groups of regular trees.
Theorem \ref{thm:regtree} and Theorem \ref{thm:biregtree} are simple applications of Theorem \ref{thm:introduct lsc} and Theorem \ref{thm:image modular function cyclic}, except for $\aut(T_5)$.
The snag is that the alternating group $A_4$ is not simple.  Proving that $\md(\aut(T_5))=5$ is  tricky and is done in Section~\ref{sec:5-valent}.

\begin{thmx}[Corollary \ref{cor:regular tree}, Corollary \ref{cor:end stabilizer in tree}, Example \ref{expl:tree}, Theorem \ref{T5-regular}]
\label{thm:regtree}
{\em Let $T_d$ be a regular tree of degree $d \geq 2$.
Let $\omega$ be an end of $T_d$.
\begin{enumerate}
    \item The tree $T_d$ is a minimal degree Cayley--Abels graph for the automorphism group $\aut(T_d)$ and the end stabilizer $\aut(T_d)_\omega$.   In particular $\md(\aut(T_d))=d$.  
    \item Let $\aut(T_d)^+$ be the index-$2$ subgroup of $\aut(T_d)$. Then $\md(\aut(T_d)^+) \leq 2d+2$ and $\md(\aut(T_d)_\omega)) = (d-1)^2+1$.
\end{enumerate}    }
\end{thmx}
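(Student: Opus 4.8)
The plan is to sandwich each minimal degree between an explicit Cayley--Abels graph, which gives the upper bound, and one of the two invariants developed above, which gives the matching lower bound: the modular function (Theorem \ref{thm:image modular function cyclic}) for the end stabilizers, which are highly non-unimodular, and the local simple content (Theorem \ref{thm:introduct lsc}) for the full group $\aut(T_d)$, which is unimodular and so out of reach of the modular function.

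First I would settle the upper bounds by writing down the graphs. The tree $T_d$ is itself a Cayley--Abels graph for $\aut(T_d)$: the action is vertex-transitive with compact open vertex stabilizers, so $\md(\aut(T_d)) \le d$. The full end stabilizer $\aut(T_d)_\omega$ still acts vertex-transitively on $T_d$, since it is transitive on each horocycle and a hyperbolic translation of length $1$ with an endpoint at $\omega$ shifts the Busemann height by $1$, removing the height obstruction; its vertex stabilizers are again compact and open, whence $\md(\aut(T_d)_\omega) \le d$. For the index-$2$ subgroup I would pass to the line graph $L(T_d)$: the group $\aut(T_d)^+$ is still edge-transitive (the type-swap fixing an edge is the nontrivial coset representative), so it acts vertex-transitively on $L(T_d)$ with compact open edge stabilizers, and $L(T_d)$ has degree $2(d-1) \le 2d+2$. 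Finally, for the end stabilizer inside $\aut(T_d)^+$ I would use the graph on one class $V_0$ of the bipartition whose edges join vertices at tree-distance $2$ along the $\omega$-direction; each vertex then has one grandparent and $(d-1)^2$ grandchildren, for degree $(d-1)^2+1$.

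The lower bounds are where the invariants earn their keep. For the non-unimodular end stabilizers I would read off the modular function from a single height shift. For $\aut(T_d)_\omega$ a height-$1$ shift $g$ satisfies $U \subseteq gUg^{-1}$ with $[gUg^{-1}:U]=d-1$, so $\Delta(g)=(d-1)^{\pm1}$ and the image of the modular function is $\langle d-1 \rangle$; Theorem \ref{thm:image modular function cyclic} with $p/q=(d-1)/1$ then gives $\md \ge d$, matching $T_d$. Inside $\aut(T_d)^+$ only even shifts survive, so the minimal shift contracts by $(d-1)^2$, the image is $\langle (d-1)^2 \rangle$, and Theorem \ref{thm:image modular function cyclic} yields $\ge (d-1)^2+1$. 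For $\aut(T_d)$ the modular function is useless, so I would instead compute the local simple content: the pointwise stabilizer of a ball is a compact open subgroup whose composition factors come only from the subtree action, that is from $S_{d-1}$, while a vertex stabilizer additionally sees $S_d$; intersecting over all compact open subgroups leaves exactly the composition factors of $S_{d-1}$. For $d\ge 6$ this set is $\{A_{d-1},C_2\}$, whose relevant $n$ (the smallest symmetric overgroup of $A_{d-1}$) is $d-1$, so Theorem \ref{thm:introduct lsc} gives $\md(\aut(T_d)) > d-1$. The cases $d\in\{3,4\}$ go identically using the factors $C_2$, resp.\ $C_2,C_3$, of $S_{d-1}$, while $d=2$ is immediate from $\md \ge 2$.

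The hard part is $d=5$. There $\aut(T_5)$ is unimodular, so Theorem \ref{thm:image modular function cyclic} says nothing, while the local simple content collapses to $\{C_2,C_3\}$: the composition factors of $S_4$ are only $C_2$ and $C_3$, precisely because $A_4$ is not simple. Theorem \ref{thm:introduct lsc} therefore delivers only $\md(\aut(T_5)) > 3$, i.e.\ $\ge 4$, one short of the truth. Excluding a $4$-regular Cayley--Abels graph for $\aut(T_5)$ is the genuine obstacle, and it requires the separate, more delicate analysis of Section~\ref{sec:5-valent} (Theorem \ref{T5-regular}), after which every claim follows. I expect the end-stabilizer bounds to be routine once the index $d-1$ is computed, and this single five-valent lower bound to be the only real difficulty.
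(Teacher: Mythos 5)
Your proposal is correct and takes essentially the same route as the paper: the same explicit graphs furnish the upper bounds ($T_d$ itself, the line graph of degree $2d-2$, and the tree of distance-$2$ steps towards $\omega$ on one bipartition class), Theorem~\ref{thm:image modular function cyclic} applied to the modular images $\langle d-1\rangle$ and $\langle (d-1)^2\rangle$ gives the end-stabilizer lower bounds exactly as in Corollary~\ref{cor:end stabilizer in tree} (which the paper phrases as the $d'=2$, resp.\ $d'=d$, cases of the bi-regular computation), the local simple content consisting of the composition factors of $S_{d-1}$ gives the lower bound for $\aut(T_d)$ as in Corollary~\ref{cor:regular tree}, and the $d=5$ case is correctly identified as the one genuine difficulty and deferred to Theorem~\ref{T5-regular}. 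The only step you should state explicitly is the hypothesis of Theorem~\ref{thm:introduct lsc} that $\aut(T_d)$ has no non-trivial compact normal subgroup (immediate from Lemma~\ref{lem:compact_acts_on_trees} together with vertex-transitivity), a point the paper itself flags as a recall/exercise before invoking Theorem~\ref{thm:localsimplecontent}.
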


More generally, for bi-regular trees we prove the following.

\begin{thmx}[Example \ref{expl:tree}, Corollary \ref{cor:end stabilizer in tree}]
\label{thm:biregtree}
{\em Let $T_{d,d'}$ be a bi-regular tree of degrees $d> d' \geq 2$. Let $\omega$ be an end of $T_{d,d'}$.
    Then the automorphism group $\aut(T_{d,d'})$ satisfies $\md(\aut(T_{d,d'})) \leq d+d'+2$ and the end stabilizer $\aut(T_{d,d'})_\omega$ satisfies $\md(\aut(T_{d,d'})_\omega)=(d-1)(d'-1)+1$.}
\end{thmx}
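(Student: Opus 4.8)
The plan is to treat the two groups separately and, for the end stabiliser, to pin down its modular function and then exhibit a Cayley--Abels graph whose degree matches the resulting lower bound. Throughout I use that, since $d\neq d'$, every automorphism of $T_{d,d'}$ preserves the bipartition into vertices of degree $d$ and of degree $d'$.

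For $\aut(T_{d,d'})$ the upper bound is the soft part. A vertex stabiliser induces the full symmetric group on the neighbours of that vertex, so $\aut(T_{d,d'})$ is transitive on the edge set. Hence the line graph $L$ of $T_{d,d'}$ (vertices the edges of the tree, two of them adjacent when they share an endpoint) is connected, locally finite, and carries a vertex-transitive $\aut(T_{d,d'})$-action whose vertex stabilisers are edge stabilisers of the tree, which are compact and open. Thus $L$ is a Cayley--Abels graph; an edge meets $d-1$ further edges at its degree-$d$ endpoint and $d'-1$ at its degree-$d'$ endpoint, so $L$ has degree $d+d'-2\le d+d'+2$, giving the first claim.

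For $G_\omega:=\aut(T_{d,d'})_\omega$ I would fix a geodesic ray to $\omega$ and use it to define horocyclic levels increasing towards $\omega$; orienting every edge towards $\omega$, each vertex acquires one parent and, if it has degree $d$ (resp. $d'$), exactly $d-1$ (resp. $d'-1$) children, of the opposite type. The displacement along $\omega$ is a continuous homomorphism $\tau\colon G_\omega\to\ZZ$ whose image is $2\ZZ$, since types must be preserved. Let $g\in G_\omega$ translate towards $\omega$ by two levels and let $v$ be a degree-$d$ vertex on its axis. Using the orbit-ratio expression $\Delta(g)=\frac{[G_v:G_v\cap G_{gv}]}{[G_{gv}:G_v\cap G_{gv}]}$, where $G_v$ is the compact open stabiliser of $v$ in $G_\omega$, the stabiliser of $v$ fixes the ray from $v$ to $\omega$ pointwise and hence fixes $gv$, whereas the stabiliser of $gv$ permutes the $(d-1)(d'-1)$ vertices lying two levels below $gv$ transitively. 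This gives $\Delta(g)=1/((d-1)(d'-1))$, so $(d-1)(d'-1)\in\im\Delta$.

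Next I would show $\im\Delta=\langle (d-1)(d'-1)\rangle$. The key point is $\ker\tau\subseteq\ker\Delta$: if $h$ fixes $\omega$ with zero displacement, then the rays $(v_n)$ and $(hv_n)$ both converge to $\omega$, hence share a tail, so $h$ fixes $v_N$ for some $N$; therefore $\ker\tau$ is the increasing union of the compact subgroups $(G_\omega)_{v_N}$, on each of which $\Delta$ is trivial. Consequently $\Delta$ factors through $\tau$ and its image is cyclic generated by $(d-1)(d'-1)$. (The same cofinality argument shows $G_\omega$ is generated by one such compact subgroup together with $g$, so it is compactly generated and $\md$ is defined.) Writing the generator as $p/q$ with $p=(d-1)(d'-1)$ and $q=1$, Theorem \ref{thm:image modular function cyclic} yields $\md(G_\omega)\ge (d-1)(d'-1)+1$. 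For the matching upper bound I would take the single $G_\omega$-orbit of degree-$d$ vertices and join $v$ to $w$ exactly when one of them is the grandparent of the other in the $\omega$-orientation; this adjacency is $G_\omega$-invariant, the action is transitive with compact open stabilisers, and the graph is connected because iterating the grandparent step reaches a common ancestor of any two vertices. Each degree-$d$ vertex has one grandparent and $(d-1)(d'-1)$ grandchildren, so this Cayley--Abels graph has degree $(d-1)(d'-1)+1$, forcing equality. The main obstacle is the modular-function analysis: both the correct evaluation of $\Delta$ on a translation and, above all, the proof that $\ker\tau\subseteq\ker\Delta$, which is what makes the image genuinely cyclic with $q=1$ and hence makes the bound of Theorem \ref{thm:image modular function cyclic} coincide with the degree of the constructed graph.
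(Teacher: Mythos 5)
Your proposal is correct and takes essentially the same route as the paper: the line graph of $T_{d,d'}$ (degree $d+d'-2\leq d+d'+2$) for the first bound, and for the end stabilizer exactly the paper's construction --- the ``grandparent'' tree on one bipartition class (edges $\{\alpha,\beta\}$ with $d(\alpha,\beta)=2$ and $\beta\in R_\alpha$) as a Cayley--Abels graph of degree $(d-1)(d'-1)+1$, matched by the lower bound from Theorem \ref{thm:image modular function cyclic} once $\im(\Delta)=\langle (d-1)(d'-1)\rangle$ is known. The single point of divergence is how cyclicity of $\im(\Delta)$ is justified: the paper reads it off the labelled graph via edge-transitivity together with Corollary \ref{cor:generators of ImDelta}, whereas you derive it intrinsically by factoring $\Delta$ through the displacement homomorphism $\tau$ using $\ker\tau\subseteq\ker\Delta$ (an increasing union of compact vertex stabilizers) --- a sound, self-contained variant that has the added benefit of exhibiting compact generation of $\aut(T_{d,d'})_\omega$ explicitly.
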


The authors conjecture that $\aut(T_{d,d'})=d+d'+2$.

\bigskip

{\bf Acknowledgements.}  The seeds for this project were sown in 2014 during a visit of George Willis to the University of Iceland when he asked the third author about the minimal degree of a Cayley--Abels graph for a totally disconnected, locally compact group. Conversely, part of this work was completed when the second author was visiting George Willis at The University of Newcastle in spring 2020. The second and third authors want to thank Pierre-Emmanuel Caprace for helpful conversations.

\section{Notation and preliminaries}

\subsection{Graphs}

A graph $\Gamma$ (undirected) is defined as a pair $(\V\Gamma, \E\Gamma)$, where $\V\Gamma$ is the set of {\em vertices} and $\E\Gamma$ is a set of two element subsets of $\V\Gamma$, whose elements we call the {\em edges} of $\Gamma$.  Our graphs thus neither have loops nor multiple edges.  The set of {\em arcs} of $\Gamma$, denoted by $\A\Gamma$, is the set of all ordered pairs $(\alpha, \beta)$ such that $\{\alpha, \beta\}\in \E\Gamma$.  
Two vertices $\alpha$ and $\beta$ are said to be {\em adjacent}, or {\em neighbours}, if $\{\alpha, \beta\}$ is an edge.  The set of neighbours of a vertex $\alpha$ is denoted $\Gamma(\alpha)$ and the {\em degree} of $\alpha$ is the cardinality of $\Gamma(\alpha)$.  A graph is {\em regular} if all vertices have the same degree $d$, and then we say that $d$ is the degree of the graph.
 A graph is {\em locally finite} if every vertex has finite degree.  

 We also consider digraphs (directed graphs). A \emph{digraph} consists of a vertex set $\V\Gamma$ and a subset  $\A\Gamma \subseteq \V\Gamma \times \V\Gamma$ that does not intersect the diagonal.  Elements of $\V\Gamma$ are called vertices and elements of $\A\Gamma$ are called {\em arcs}.  
The \emph{underlying undirected graph} of a digraph $\Gamma$ has the same vertex set as $\Gamma$ and the set of edges is the set of all pairs $\{\alpha, \beta\}$ where $(\alpha, \beta)$ or $(\beta, \alpha)$ is an arc in $\Gamma$.   
If $\alpha\in \V\Gamma$ in a digraph $\Gamma$ 
we define the sets of {\em in-} and {\em out-neighbours} as
$\inn(\alpha)=\{\beta\in \V\Gamma\mid (\beta,\alpha)\in \A\Gamma\}$ and 
$\out(\alpha)=\{\beta\in \V\Gamma\mid (\alpha,\beta)\in \A\Gamma\}$, respectively.
The cardinality of $\inn(\alpha)$ is the {\em in-degree} of $\alpha$ and the cardinality of $\out(\alpha)$ is the {\em out-degree} of $\alpha$.   A digraph is {\em regular} if any two vertices have the same in-degree and also the same out-degree.  

For an integer $s \geq 0$ an \emph{$s$-arc} in $\Gamma$ (here $\Gamma$ can be an undirected graph or a digraph) is an $(s+1)$-tuple $(\alpha_0,\dots,\alpha_s)$ of vertices such that for every $0 \leq i \leq s-1$ the ordered pair $(\alpha_{i},\alpha_{i+1})$ is an arc in $\Gamma$, and $\alpha_{i-1} \neq \alpha_{i+1}$ for all $1 \leq i \leq s-1$.   Infinite arcs come in three different shapes.  There are  1-way infinite arcs
$(\alpha_0, \alpha_1, \ldots)$ and there are 2-way infinite arcs $(\ldots, \alpha_{-1}, \alpha_0, \alpha_1, \ldots)$.  In all cases we insist that $(\alpha_{i},\alpha_{i+1})$ is an arc in $\Gamma$, and $\alpha_{i-1} \neq \alpha_{i+1}$ for all $i$.

A graph $\Gamma$ is said to be {\em connected} if for every pair of vertices $\alpha$ and $\beta$ in $\Gamma$ there exists an $s$-arc $(\alpha_0,\dots,\alpha_s)$ with $\alpha = \alpha_0$ and $\beta = \alpha_s$.  The smallest possible $s$ is the {\em distance} between $\alpha$ and $\beta$ and is denoted with $d_\Gamma(\alpha, \beta)$.  

\subsection{Group actions}

Let $\Sym(\Omega)$ denote the group of all permutations of the set $\Omega$.  By a {\em permutation group} on $\Omega$ we mean a subgroup of $\Sym(\Omega)$.  
An {\em action} of a group $G$ on a set $\Omega$ is defined as a homomorphism $\pi: G\to \Sym(\Omega)$.  We write our group action on the right so that if $\alpha\in\Omega$ and $g\in G$ then $\alpha g$ denotes the image of $\alpha$ under the permutation $\pi(g)$.   
The kernel of the action (i.e.~the kernel of the homomorphism $\pi$) is equal to the set $K=\{g\in G\mid \alpha g=\alpha, \mbox{ for all }\alpha\in \Omega\}$.     If $K=\{1\}$, we say that the action is {\em faithful}.  Set $G^\Omega=G/K$.  The homomomorphism $\pi$ induces an injective homomorphism $G^\Omega\to \Sym(\Omega)$ giving an action of $G^\Omega$ on $\Omega$.  This action is clearly faithful and thus we can think of $G^\Omega$ as a permutation group on $\Omega$.  We call $G^\Omega$ {\em the permutation group on $\Omega$ induced by the action of $G$}.    If the action $\pi$ is faithful, then $G^\Omega=G$ and we can think of $G$ itself as a permutation group.

The {\em stabilizer} in $G$ of $\alpha\in \Omega$ is the subgroup $G_\alpha=\{g\in G\mid \alpha g=\alpha\}$.   For a set $A\subseteq \Omega$ the {\em pointwise stabilizer} of $A$ is the subgroup 
$G_{(A)}=\{g\in G\mid \alpha g=\alpha\mbox{ for all }\alpha \in A\}$.   

The {\em orbit} of a point $\alpha \in \Omega$ under $G$ is the set $\alpha G=\{\alpha g\mid g \in G\}$ and for $S\subseteq G$ we define the $S$-orbit of $\alpha$ as the set $\alpha S=\{\alpha g\mid g \in S\}$.  The orbits of $G_\alpha$ are called {\em suborbits} of $G$.   
An action is said to be {\em transitive} if for any two points $\alpha, \beta\in \Omega$ there is an element $g\in G$ such that $\alpha g=\beta$, i.e. every orbit under $G$ is all of $\Omega$.    A permutation group $G$ on a set $\Omega$  is called \emph{free} (or \emph{semi-regular}) if $G_\alpha=\{1\}$ for all points $\alpha\in \Omega$ and \emph{regular} if it is free and transitive.

Suppose $\Gamma$ and $\Delta$ are graphs (or digraphs).
  A \emph{graph morphism}  $\varphi: \Gamma\to \Delta$ is a map $\varphi \colon \V\Gamma \to \V\Delta$ such that if $(\alpha, \beta) \in \A\Gamma$ then $(\varphi(\alpha),\varphi(\beta)) \in \A\Delta$.
  If a graph morphism $\varphi:\Gamma\to \Gamma$ is bijective and $\varphi$ induces a bijective map $\A\Gamma\to \A\Gamma$, then $\varphi$ is an \emph{automorphism} of $\Gamma$.  The set of all automorphisms of $\Gamma$ is a group, the {\em automorphism group of $\Gamma$},  denoted $\Aut(\Gamma)$.   We think of $\aut(\Gamma)$ as a permutation group on the vertex set of $\Gamma$, and the action on the vertex set induces actions on the set of edges and the set of arcs.  
  When we say that \lq\lq a group $G$ acts on a graph $\Gamma$\rq\rq\ we always mean an action by automorphisms.   Such an action is described by a homomorphism $G\to\aut(\Gamma)$.   The permutation group on the vertex set induced by $G$ will be denoted by $G^\Gamma$.

 A graph or a digraph $\Gamma$ is {\em vertex-transitive} if the automorphism group acts transitively on the vertex set.  We say that $\Gamma$ is {\em edge-transitive} ({\em arc-transitive}, $s$-{\em arc-transitive})  if the automorphism group acts transitively on the edge set (the arc set, the set of $s$-arcs).   When the automorphism group is $s$-arc-transitive for all $s$,  then $\Gamma$ is said to be {\em highly-arc-transitive}.
 

Consider now a group $G$ that acts vertex-transitively on a graph $\Gamma$.   Let $\alpha \in \V\Gamma$.
The stabilizer $G_\alpha$ clearly leaves the set $\Gamma(\alpha)$ invariant and thus induces an action on it. The kernel of this action is $G_{(\Gamma(\alpha))} \cap G_\alpha$ and the quotient $G_\alpha^{\Gamma(\alpha)}=G_\alpha / (G_{(\Gamma(\alpha))} \cap G_\alpha)$ embeds as a subgroup into $\Sym(\Gamma(\alpha))$.
Let now $\alpha'$ be another vertex of $\Gamma$. By assumption there exists $g \in G$ with $\alpha g = \alpha'$, and $G_{\alpha'}=g^{-1} G_\alpha g$ acts on $\Gamma(\alpha')$. The actions of  $G_\alpha$ on $\Gamma(\alpha)$ and $G_{\alpha'}$ on $\Gamma(\alpha')=\Gamma(\alpha)g$ are conjugate  via $g$ and hence isomorphic. Thus, the following is independent of the choice of $\alpha$.


\begin{definition}
  Let $\Gamma$ be a graph of degree $d$ on which a group $G$ acts vertex-transitively.
  Let $\alpha \in \V\Gamma$.
  The \emph{local action} of $G$ on $\Gamma$ is the conjugacy class of the group $G_{\alpha}/(G_{(\Gamma(\alpha))} \cap G_\alpha)$, seen as a subgroup of the symmetric group $S_d$.
\end{definition}

Given a group $G$ acting on a set $\Omega$ we say that an equivalence relation on $\Omega$ is a {\em $G$-congruence} if $\alpha g$ is equivalent to $\beta g$ if and only if $\alpha$ is equivalent to $\beta$.   The orbits of a normal subgroup $N\trianglelefteq G$ form the classes of a $G$-congruence.  

When $\sim$ denotes an equivalence relation on the vertex set of a graph (or a digraph) $\Gamma$ we can form the {\em quotient graph} ({\em quotient digraph}) $\Gamma/\sim$ which has the set of $\sim$-classes as a vertex set and if $A$ and $B$ are two $\sim$-classes then $(A,B)$ is an arc in $\Gamma/\sim$ if and only if there is a vertex $\alpha\in A$ and a vertex $\beta\in B$ such that $(\alpha,\beta)$ is an arc in $\Gamma$.  
If $H$ is a subgroup of $\autga$ then $\Gamma/H$ denotes the quotient graph of $\Gamma$ with respect to the equivalence relation whose classes are the $H$-orbits on the vertex set.  If $G$ acts on $\Gamma$ and $\sim$ is a $G$-congruence then $G$ has a natural action on the set of $\sim$-classes that gives an action on the quotient digraph $\Gamma/\sim$ by automorphisms.

\subsection{The permutation topology}

When $G$ is a group acting on a set $\Omega$, e.g.~the automorphism group of a graph $\Gamma$ acting on the vertex set $\V\Gamma$,  we can endow $G$ with the {\em permutation topology}, see for instance \cite{Woess1991} and \cite{Moller2010}.   One description of this topology is as follows:  A neighbourhood basis of the identity element consists of all subgroups of the form $G_{(\Phi)}$, where $\Phi$ ranges over all finite subsets of $\Omega$.  Thus a subgroup is open if and only if it contains the pointwise stabilizer of some finite subset of $\Omega$.  Another way to describe this topology is to think of $\Omega$ as having the discrete topology and then the permutation topology is the compact-open topology on $G$.  The compact-open topology has the property that the action map $\Omega\times G\to \Omega; (\alpha, g)\mapsto \alpha g$ is continuous.   If the group $G$ already has a topology and the stabilizer $G_\alpha$ of a point $\alpha\in \Omega$ is open, then the permutation topology is a subset of the topology on $G$.  
If the action of $G$ is faithful then $G$ is totally disconnected.  

Various topological properties of the permutation topology have natural descriptions in terms of the groups action.  For instance, a sequence $\{g_i\}$ of elements in $G$ converges to an element $g\in G$ in the permutation topology if and only if for each element $\alpha\in \Omega$ there is a number $N_\alpha$ such that if $i\geq N_\alpha$ then $\alpha g_i=\alpha g$.  We say that $G$ is {\em closed in the permutation topology} if $G^\Omega$ is closed in $\Sym(\Omega)$ with respect to the permutation topology on $\Sym(\Omega)$.  Compactness and co-compactness have natural descriptions in the permutation topology.

\begin{lemma}\label{lem:compact-cocompact}
Let $G$ be a group acting transitively on a countable set $\Omega$. Endow  $G$ with the permutation topology.  Assume $G$ is closed in the permutation topology.  Assume also that all suborbits are finite.   Then:
\begin{enumerate}
    \item {\rm (\cite[Lemma~1]{Woess1991})}  The stabilizer of a point $\alpha\in \Omega$ is compact.
    \item {\rm (\cite[Lemma~2]{Woess1991})}  A subset $A$ in $G$ has compact closure if and only if all orbits of $A$ are finite.
    \item {\rm (\cite[Proposition~1]{Nebbia2000}, cf.~\cite[Lemma~7.5]{Moller2002})}  A subgroup $H$ in $G$ is co-compact, i.e.~$G/H$ is compact, if and only if $H$ has only finitely many orbits on $\Omega$.
\end{enumerate}
\end{lemma}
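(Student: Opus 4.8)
The plan is to reduce all three statements to Tychonoff's theorem together with the single structural fact that a closed subgroup of $\Sym(\Omega)$ is recovered as a closed subspace of a product of copies of the discrete set $\Omega$. We may assume the action is faithful, replacing $G$ by the closed subgroup $G^\Omega \leq \Sym(\Omega)$. Concretely, the permutation topology is the subspace topology induced by the injection $\Sym(\Omega) \hookrightarrow \prod_{\beta \in \Omega}\Omega \times \prod_{\beta\in\Omega}\Omega$, $g \mapsto ((\beta g)_\beta, (\beta g^{-1})_\beta)$, and $\Sym(\Omega)$ is closed in this product (the conditions $g\circ g^{-1}=g^{-1}\circ g = \id$ are closed). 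The finiteness of suborbits confines the relevant coordinates to finite sets, Tychonoff supplies compactness, and the closedness of $G$ upgrades ``compact in the ambient product'' to ``compact in $G$''.

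For part (1), first note that $G_\alpha = G_{(\{\alpha\})}$ is a basic open subgroup. Restricting the embedding above to $G_\alpha$, every coordinate $\beta g$ (and $\beta g^{-1}$) ranges only over the suborbit $\beta G_\alpha$, which is finite by hypothesis; hence the image of $G_\alpha$ lies in the compact set $\prod_\beta (\beta G_\alpha) \times \prod_\beta(\beta G_\alpha)$. It remains to see the image is closed: a limit $f$ of a net in $G_\alpha$ is a bijection of $\Omega$ by the usual argument pairing $g$ with $g^{-1}$, it fixes $\alpha$ since every net member does, and it lies in $G$ because $G$ is closed; thus $f \in G \cap \Sym(\Omega)_\alpha = G_\alpha$, and a closed subset of a compact space is compact. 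The delicate point, and the one I expect to require the most care, is exactly this closedness claim: one must use that the permutation topology is pointwise convergence of $g$ \emph{and} of $g^{-1}$ (pointwise convergence of $g$ alone does not make inversion continuous on an infinite $\Sym(\Omega)$), and verify that a net of bijections fixing $\alpha$ has a bijective limit lying in $G$. Everything in parts (2) and (3) is then formal.

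For part (2), the forward direction uses that the orbit map $G \to \Omega$, $g \mapsto \beta g$, is continuous — indeed locally constant, since $G_\beta$ is open — into the discrete space $\Omega$; if $\overline{A}$ is compact then its image $\beta\overline{A}$ is a compact, hence finite, subset of $\Omega$, and $\beta A \subseteq \beta\overline A$ is finite for every $\beta$. Conversely, if $\alpha A$ is finite, choose by transitivity finitely many $g_1,\dots,g_n$ with $\alpha g_i$ enumerating $\alpha A$; then each $a \in A$ satisfies $\alpha a = \alpha g_i$ for some $i$, so $a \in G_\alpha g_i$, whence $A \subseteq \bigcup_{i=1}^n G_\alpha g_i$. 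This is a finite union of translates of the compact group $G_\alpha$, so it is compact, and $\overline A$ is a closed subset of it.

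For part (3), I would identify the $G$-set $\Omega$ with the coset space $G_\alpha\rightquotient G$ via $\alpha g \mapsto G_\alpha g$, so that the $H$-orbits on $\Omega$ correspond to the double cosets in $G_\alpha\rightquotient G/H$, equivalently to the orbits of the left $G_\alpha$-action on $G/H$. If $G/H$ is compact, then since $G_\alpha$ is open each orbit $G_\alpha gH$ is open in $G/H$; these orbits partition the compact space $G/H$ into disjoint open sets, forcing finitely many, i.e.\ finitely many $H$-orbits. Conversely, given finitely many $H$-orbits with representatives $\alpha g_1,\dots,\alpha g_k$, the same coset computation as in part (2) yields $G = \big(\bigcup_{j=1}^k G_\alpha g_j\big)H = CH$ with $C$ compact, so $G/H$ is a continuous image of the compact set $C$ and therefore compact.
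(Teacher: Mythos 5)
Your proof is correct, but note that the paper does not actually prove this lemma: it is quoted with citations to Woess (\cite[Lemmas 1 and 2]{Woess1991}) and Nebbia/M\"oller for part (3), so there is no internal argument to compare against. Your self-contained treatment is the standard one and is essentially what those references do: realize $G^\Omega$ as a closed subspace of a product of discrete fibres via the two-sided embedding $g \mapsto ((\beta g)_\beta, (\beta g^{-1})_\beta)$, get compactness of $G_\alpha$ from Tychonoff and the finiteness of suborbits, and then derive (2) and (3) formally from (1) via the coset decompositions $A \subseteq \bigcup_i G_\alpha g_i$ and $G = \bigl(\bigcup_j G_\alpha g_j\bigr)H$, together with the observation that the open sets $G_\alpha g H/H$ partition the compact space $G/H$. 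All of these steps check out, including the reduction to the faithful case (the permutation topology on $G$ is the initial topology from $G \to G^\Omega$, so quasi-compactness of preimages transfers) and the verification that a limit of a net in $G_\alpha$ is a bijection fixing $\alpha$ that lies in $G$.

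One correction to your commentary, though it does not affect the proof: your parenthetical claim that \lq\lq pointwise convergence of $g$ alone does not make inversion continuous on an infinite $\Sym(\Omega)$\rq\rq\ is false. Inversion \emph{is} continuous on $\Sym(\Omega)$ with the one-sided pointwise topology, because a condition on inverses converts into a condition on the elements themselves: $\beta g^{-1} = \gamma$ if and only if $\gamma g = \beta$, so the preimage under inversion of a basic open set is again basic open; consequently the one-sided and two-sided subspace topologies on $\Sym(\Omega)$ coincide, and both agree with the paper's permutation topology given by cosets of pointwise stabilizers. What genuinely fails, and what your two-sided embedding is actually needed for, is \emph{closedness}: the image of $\Sym(\Omega)$ under the one-sided map into $\prod_{\beta}\Omega$ is not closed, since a net of permutations can converge pointwise to an injection that is not surjective. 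The two-sided embedding repairs exactly this, making the image closed (your displayed closed conditions $f'_{f_\beta} = \beta$ and $f_{f'_\beta} = \beta$ are the right ones), so that \lq\lq closed subset of a compact box\rq\rq\ yields compactness of $G_\alpha$. With that one sentence of justification amended, the argument is complete as written.
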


Suppose that $G$ already has a given topology and acts transitively on a set $\Omega$ and the stabilizers of points are compact open subgroups.   Then any compact subset of $G$ will also be compact in the permutation topology and thus have finite orbits on $\Omega$.  

From the above it  also follows that if  $G$ is a closed subgroup of the automorphism group of a locally finite graph $\Gamma$, then $G$ with the permutation topology is a totally disconnected, locally compact group.  The reader may also find it reassuring to know that if $G$ is a permutation group on a countable set $\Omega$, then the permutation topology is metrizable:  Enumerate
the points in
$\Omega$ as $\alpha_1, \alpha_2, \ldots$.  Take two elements $g, h\in G$.
Let $n$ be the smallest number such that $\alpha_n g\neq \alpha_n h$
or $\alpha_n g^{-1}\neq
\alpha_n h^{-1}$.  Set $d(g,h)=1/2^n$.   Then $d$ is a metric on $G$
that induces the permutation topology.

\subsection{Cayley--Abels graphs}

The central concept in this work is  the concept of a Cayley--Abels graph for a compactly generated, totally disconnected, locally compact group.  
The fundamental result about totally disconnected, locally compact groups is the theorem of van Dantzig \cite{Dantzig1936} that says that such a group contains a compact open subgroup. 

\begin{definition}
  Let $G$ be a totally disconnected, locally compact group. A connected, locally finite graph that $G$ acts vertex-transitively on such that the stabilizers of vertices are compact open subgroups is a called a \emph{Cayley--Abels graph} for $G$. 
\end{definition}

A Cayley--Abels graph for a totally disconnected, locally compact group exists if and only if the group is compactly generated.  It is easy to show that if $\Gamma$ is a Cayley--Abels graph and $\alpha$ a vertex and $g_1, \ldots, g_d$ are elements in $G$ such that $\{\alpha g_1, \ldots, \alpha g_d\}=\Gamma(\alpha)$, then the subgroup $\langle g_1, \ldots, g_d\rangle$ is transitive and the compact set $G_\alpha\cup\{g_1, \ldots, g_d\}$ generates $G$.  As a consequence of this we see that if $g_1,\ldots, g_k$ are elements in $G$ such that the set $\{\alpha g_1, \ldots, \alpha g_k\}$ contains a representative of every orbit of $G_\alpha$ on $\Gamma(\alpha)$ then the set $G_\alpha\cup\{g_1, \ldots, g_k\}$ generates $G$.  These facts will be used repeatedly. Two different ways of constructing a Cayley--Abels graph for a compactly generated, totally disconnected, locally compact group $G$ are described below.  

The first construction (see \cite[Construction 1]{KronMoller2008}) goes as follows.  Start with the set $G/U$ of right cosets of some compact open subgroup $U$ of $G$.  This will be the vertex set of our graph.  Then choose group elements $g_1, \ldots, g_n$ such that $G=\langle U, g_1, \ldots, g_n\rangle$ and finally take some element $\alpha\in G/U$ and let the edge set be the union of the $G$-orbits $\{\alpha, \alpha g_1\}G, \ldots, \{\alpha, \alpha g_n\}G$.  The resulting graph is a Cayley--Abels graph for $G$.

The second construction is in Abels' paper \cite[Beispiel 5.2]{Abels1974} (see also \cite[Construction 2]{KronMoller2008}).  In Abels' construction you start by taking a compact generating set $S$ and a compact open subgroup $U$.  Then construct the Cayley graph with respect to this generating set $S$ and finally you contract the right cosets of $U$.  After all this, you are left with a locally finite connected graph that $G$ acts transitively on.  The stabilizers of vertices are $U$ and its conjugates.  

In both construction the constructor has choices and thus there will always be more than one possible Cayley--Abels graph.  But they are all locally finite and thus the following concept is well defined.

\begin{definition}
Let $G$ be a compactly generated, totally disconnected, locally compact group.  The number  $\md(G)$ denotes the minimal degree of a Cayley--Abels graph for $G$. 
\end{definition}

\begin{remark}
Any two Cayley--Abels graphs for a compactly generated, totally disconnected group $G$ are quasi-isometric to each other (see \cite[Theorem 2.7]{KronMoller2008}). This implies that the two graphs have the same \lq\lq large scale\rq\rq\ properties, e.g.\ any two Cayley--Abels graph of a group $G$ have the same number of ends.
In this work we will not need the concept of quasi-isometry, except that it will appear again in a remark in Section~\ref{sec:discrete} and in Section~\ref{sec:degree2or3}.
\end{remark}

\subsection{The scale function and tidy supgroups}

The pioneering paper of Willis \cite{Willis1994} started a new wave of interest in totally disconnected, locally compact groups.  The fundamental concepts of Willis's theory are the {\em scale function} and {\em tidy subgroups}.  In a later paper, \cite{Willis2001}, Willis gave the following simple definitions of these fundamental concepts and showed that these new definitions are equivalent to his original definitions in \cite{Willis1994}.

\begin{definition} {\rm (See \cite[Theorem~3.1]{Willis2001})} \label{def:scale}
  Let \(G\) be a totally disconnected, locally compact group. The {\it scale function} on \(G\) is defined as
  \[s(g) =\min\{|U:U\cap g^{-1}Ug| \mid U \text{ compact open subgroup of } G\}.\]
  A compact open subgroup \(U\) of \(G\) is said to be {\it tidy for \(g\)} if and only if this minimum is attained at $U$, i.e. \(s(g) = |U:U\cap g^{-1}Ug|\).
\end{definition} 

The following proposition allows us to compute the scale function of elements by using an action of the group on a set.  

\begin{proposition}\label{prop:scale}{\rm (\cite[Corollary 7.8]{Moller2002})}
Let \(G\) be a totally disconnected, locally compact group and let $U$ be a compact, open subgroup of $G$.  Consider the action of $G$ on the set of right cosets $\Omega=G/U$.  Set $\alpha=U$ and think of $\alpha$ as a point in $\Omega$. If $g\in G$ then 
$$s(g)=\lim_{n\to \infty} |(\alpha g^n) G_\alpha|^{1/n},$$
and, furthermore, $s(g)=1$ if and only if there is a constant $C$ such that $|(\alpha g^i)G_\alpha|\leq C$ for all $i=0, 1, 2, \ldots$.
\end{proposition}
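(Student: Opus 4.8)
The plan is to reduce the orbit-size statement to one about indices of compact open subgroups, prove that these indices form a submultiplicative sequence so that the limit exists, and then identify the limit with $s(g)$ by comparing an arbitrary compact open subgroup with one that is tidy for $g$.

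First I would make the translation explicit. Since $\alpha=U$ we have $G_\alpha=U$, and the point $\alpha g^n$ is the coset $Ug^n$; computing the stabiliser of $Ug^n$ inside $U$ shows that, writing $f(n)=|(\alpha g^n)G_\alpha|$, one has $f(n)=[U:U\cap g^{-n}Ug^n]$. In particular $f(n)\geq s(g^n)=s(g)^n$ straight from the definition of the scale together with Willis's identity $s(g^n)=s(g)^n$, which already gives $\liminf_n f(n)^{1/n}\geq s(g)$.

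The main work is to show that $f$ is submultiplicative, $f(m+n)\leq f(m)f(n)$. The elementary tool is the monotonicity of indices: if $H\leq K$ are subgroups and $L$ is any subgroup, then the natural map of coset spaces $H/(H\cap L)\hookrightarrow K/(K\cap L)$ is injective, so $[H:H\cap L]\leq [K:K\cap L]$. Applying this to a suitable conjugate of $U\cap g^{-m}Ug^m$ sitting inside $U$, together with the index chain rule for $W=U\cap g^{-m}Ug^m\cap g^{-(m+n)}Ug^{m+n}$ in $U$, bounds $f(m+n)$ by $f(m)f(n)$. Fekete's lemma then yields that $\lim_n f(n)^{1/n}$ exists and equals $\inf_n f(n)^{1/n}$. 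The same monotonicity lemma, applied to $W=U\cap U'$ for two compact open subgroups $U,U'$, gives $f_U(n)\leq [U:W]\,[U':W]\,f_{U'}(n)$ and, by symmetry, the reverse inequality, so after taking $n$-th roots the value $L=\lim f_U(n)^{1/n}$ does not depend on the choice of compact open subgroup $U$.

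To evaluate $L$, I would choose $U_0$ tidy for $g$: Willis's tidy theory gives $[U_0:U_0\cap g^{-n}U_0g^n]=s(g)^n$ for every $n$, hence $L=\lim (s(g)^n)^{1/n}=s(g)$, which proves the limit formula for every $U$. For the final equivalence, if the orbit sizes are bounded by $C$ then $1\leq f(i)^{1/i}\leq C^{1/i}\to 1$, so $s(g)=1$; conversely $s(g)=1$ forces a subgroup $U_0$ tidy for $g$ to satisfy $gU_0g^{-1}\subseteq U_0$, whence $U_0\subseteq g^{-n}U_0g^n$ and $f_{U_0}(n)=1$ for all $n$, and the comparison above then bounds $f_U(i)$ uniformly in $i$. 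The step I expect to be most delicate is precisely this transfer: the naive equality $f(n)=s(g)^n$ fails for a non-tidy $U$, so both the existence of the limit and its independence of $U$ must be carried entirely by the submultiplicativity and monotonicity estimates, with tidy subgroups invoked only at the very end to pin down the value.
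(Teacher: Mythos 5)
Your proof is correct, and all the steps check out: the identification $f(n)=|(\alpha g^n)G_\alpha|=[U:U\cap g^{-n}Ug^n]$ is right (the stabilizer of $Ug^n$ in $U$ is exactly $U\cap g^{-n}Ug^n$), the submultiplicativity $f(m+n)\leq f(m)f(n)$ does follow from the chain $[U:W]=[U:U_m][U_m:W]$ with $W=U\cap g^{-m}Ug^m\cap g^{-(m+n)}Ug^{m+n}$ together with the injectivity-of-coset-maps lemma applied inside $g^{-m}Ug^m$, and the two-sided comparison $f_U(n)\leq [U:W][U':W]f_{U'}(n)$ for $W=U\cap U'$ is exactly what makes the limit independent of $U$. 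Be aware, though, that the paper itself does not prove this proposition — it is quoted verbatim from M\"oller's paper (Corollary 7.8 there), so there is no in-paper argument to match. Compared with M\"oller's original proof, which develops the statement through his graph- and permutation-theoretic analysis of tidy subgroups (the same machinery that yields the identity $[U_0:U_0\cap g^{-n}U_0g^n]=s(g)^n$ you invoke, cf.\ Corollary 3.5 of that paper), your route is more elementary in structure: Fekete's lemma and index monotonicity carry the existence and $U$-independence of the limit, and tidy theory enters only once, as a black box, to evaluate the limit — indeed, with a tidy $U_0$ in hand your opening $\liminf$ bound via $s(g^n)=s(g)^n$ becomes redundant, since $f_{U_0}(n)=s(g)^n$ plus $U$-independence already gives both inequalities. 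Your treatment of the uniscalar case is also sound: $s(g)=1$ forces $U_0\subseteq g^{-1}U_0g$ for tidy $U_0$, hence $f_{U_0}\equiv 1$, and the comparison constant $[U:W][U_0:W]$ gives the uniform bound $C$; the converse direction via $f(i)^{1/i}\leq C^{1/i}\to 1$ is immediate. In short: a correct, self-contained alternative whose only non-elementary input is the single tidy-subgroup identity, whereas the cited source derives the statement inside a larger structural theory.
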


\section{Discrete actions}\label{sec:discrete}

Every discrete group is trivially a totally disconnected, locally compact group, but then the topology carries no information about the group. It is useful to know when the given group topology, or the permutation topology arising from the action of a group on a Cayley--Abels graph, carry additional information about the group and its action on a Cayley--Abels graph.

\begin{definition}
An action of a group $G$ on a set $\Omega$ is said to be {\em discrete} if  the stabilizers in $G^\Omega$ of points in $\Omega$ are finite.
\end{definition}

If the action of $G$ on $\Omega$ is discrete, then there is a finite set $\Phi\subseteq \Omega$ such that the pointwise stabilizer in $G^\Omega$ of $\Phi$ is trivial. In particular the permutation topology on $G^\Omega$ is discrete.  The pointwise stabilizer in $G$ of $\Phi$ is then a compact open subgroup of $G$, where $G$ has the permutation topology, and is equal to $K$, the kernel of the action.   
If the action is discrete and faithful, the permutation topology on $G$ is discrete.  

\begin{definition}
A topological group is said to be {\em nearly discrete} if it has a compact, open, normal subgroup.
\end{definition}

Following Cornulier in \cite{Cornulier2019} we call an action on a set $\Omega$ {\em block-discrete} if there is a $G$-congruence $\sim$ on $\Omega$ with finite classes such that stabilizers in $G^{\Omega/\sim}$ of points in $\Omega/\sim$ are finite.

\begin{lemma}{\rm (See \cite[Fact 5.6]{Cornulier2019})}
Let $G$ be a topological group acting transitively on a set $\Omega$ such that the stabilizers of points are compact, open subgroups.  Then $G$ is nearly discrete if and only if the action of $G$ on $\Omega$ is block-discrete.
\end{lemma}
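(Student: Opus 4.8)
The plan is to fix a base point $\alpha \in \Omega$, write $U = G_\alpha$ for its stabilizer (a compact, open subgroup by hypothesis), and use transitivity to identify $\Omega$ with $G/U$. Both implications then amount to passing between a compact, open, normal subgroup of $G$ and a $G$-congruence with finite classes whose induced action has finite stabilizers. I would prove the two directions separately.

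For the forward direction, suppose $G$ has a compact, open, normal subgroup $K$. I take $\sim$ to be the equivalence relation whose classes are the $K$-orbits on $\Omega$; since $K \trianglelefteq G$, this is a $G$-congruence by the observation recorded before the definition of quotient graphs. The class of $\alpha$ is $\alpha K$, which is in bijection with $K/(K \cap U)$; as $K$ is compact and $K \cap U = K \cap G_\alpha$ is open in $K$, this index is finite, so every class of $\sim$ is finite. It remains to check that the induced point stabilizers are finite. Because $K$ is normal, the setwise stabilizer in $G$ of the block $\alpha K$ equals $KU$, and the kernel $N$ of the action of $G$ on $\Omega/\sim$ contains $K$ (as $K$ fixes every $K$-orbit). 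Hence the stabilizer of $\alpha K$ in $G^{\Omega/\sim} = G/N$ is $KU/N$, a quotient of $KU/K \cong U/(U\cap K)$, which is finite; so the action is block-discrete.

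For the backward direction, suppose $\sim$ is a $G$-congruence with finite classes such that stabilizers in $G^{\Omega/\sim}$ of points are finite, and let $N$ be the kernel of the action of $G$ on $\Omega/\sim$. I claim $N$ is a compact, open, normal subgroup. Normality is immediate since $N$ is a kernel. To trap $N$ in a compact set, let $B = \{\alpha_1,\dots,\alpha_m\}$ be the $\sim$-class of $\alpha$; its setwise stabilizer $G_B$ permutes the finitely many points of $B$, so it contains the pointwise stabilizer $\bigcap_{i} G_{\alpha_i}$ --- an intersection of finitely many compact, open subgroups, hence compact and open --- as a finite-index subgroup, making $G_B$ itself compact and open. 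Now $N \le G_B$, and the induced stabilizer of the point $B$ in $G^{\Omega/\sim} = G/N$ is exactly $G_B/N$, which is finite by hypothesis; thus $N$ has finite index in $G_B$. Finally, the action map $G \to \Sym(\Omega/\sim)$ is continuous for the permutation topologies (the block stabilizers are open, as they contain point stabilizers), and $\Sym(\Omega/\sim)$ is Hausdorff, so $N$, the preimage of the identity, is closed. A closed subgroup of the compact group $G_B$ is compact, and a closed subgroup of finite index in the open subgroup $G_B$ is open; hence $N$ is a compact, open, normal subgroup and $G$ is nearly discrete.

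I expect the main obstacle to be the backward direction, specifically verifying that the kernel $N$ is simultaneously compact and open. Neither property is formal: compactness relies on trapping $N$ inside the setwise stabilizer of a single finite block and checking that this stabilizer is compact (which uses finiteness of the block together with compactness and openness of point stabilizers), while openness requires knowing $N$ is closed --- best seen from continuity of the induced action into a Hausdorff permutation group --- and then combining this with the finite-index bound coming from the block-discreteness hypothesis. The forward direction is comparatively routine once one notes that $K$-orbits form a $G$-congruence and that compact-by-open indices are finite.
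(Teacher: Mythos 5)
Your proof is correct and follows essentially the same route as the paper: the forward direction uses the $K$-orbit congruence exactly as in the paper's proof, and the backward direction likewise extracts the kernel of the induced action on $\Omega/\sim$. The only difference is cosmetic: where the paper identifies that kernel with the pointwise stabilizer $G_{(\Phi)}$ of a suitable finite set of blocks (compact and open for the same reason your $\bigcap_i G_{\alpha_i}$ is), you instead show the kernel is closed and of finite index in the compact, open setwise stabilizer of a single block --- both verifications rest on the same underlying fact about stabilizers of finite blocks.
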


\begin{proof}
Suppose that $G$ is nearly discrete and that $N$ is a compact, open, normal subgroup.  The orbits of $\omega N$ with $\omega \in \Omega$ are finite. They form the classes of a $G$-congruence on $\Omega$, denote the quotient space by $\Omega/N$.  Let $\omega \in \Omega$ and $U := G_{\omega N}$, where $\omega N \in \Omega/N$.  Then $U = G_{\omega N}$ with $\omega N \subseteq\Omega$. But $\omega N$ is finite, hence $U$ is a compact open subgoup of $G$.      The kernel $K$ of the action of $G$ on $\Omega/N$ contains $N$ and is contained in $G_{\omega N}$.  Thus $K$ is both open and compact. In particular $G/K$ is discrete and the image of $U$ in $G/K$ is finite.   Hence the action on $\Omega/N$ is nearly discrete.    

Assume now that the action of $G$ on $\Omega$ is block-discrete.  Hence there is a $G$-congruence $\sim$ on $\Omega$ with finite classes such that stabilizers in $G^{\Omega/\sim}$ are finite.   Let $K$ be the kernel of the action of $G$ on $\Omega/\sim$.  The action of $G$ on $\Omega/\sim$ is discrete and thus there is a finite set $\Phi \subseteq\Omega/\sim$ such that $G_{(\Phi)} = K$.  But $G_{(\Phi)}$ is a compact, open subgroup of $G$.  Thus $K$ is a compact, open, normal subgroup of $G$ and $G$ is nearly discrete. 
\end{proof}

The following nice result of Schlichting, here rephrased using our terminology, gives further connections between the action and the property that a group with the permutation topology is nearly discrete.

\begin{theorem}  {\rm (\cite{Schlichting1980})}  Let $G$ be a group acting transitively on a set $\Omega$. Endow $G$ with the permutation topology.   Then $G$ is nearly discrete if and only if there is a finite upper bound on the sizes of suborbits .
\end{theorem}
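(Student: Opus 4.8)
I fix a base point $\alpha\in\Omega$ and write $H=G_\alpha$; since the action is transitive, every point stabilizer is a conjugate $H^g=g^{-1}Hg=G_{\alpha g}$, and $H$ is open in the permutation topology. The suborbit through $\alpha g$ has size $|(\alpha g)H|=[H:H\cap G_{\alpha g}]=[H:H\cap H^g]$, so the hypothesis of bounded suborbits is exactly the condition $\sup_{g\in G}[H:H\cap H^g]\le M<\infty$, i.e.\ that $H$ is uniformly commensurated in $G$. The plan is to treat the two implications separately, using the preceding lemma (block-discrete $\iff$ nearly discrete) for the easy direction and a commensurability argument for the hard one.

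For the forward implication I would assume $G$ is nearly discrete with compact, open, normal subgroup $N$. By the previous lemma the action is block-discrete, and its proof exhibits the blocks as the $N$-orbits: by transitivity and normality these all have the same finite size $m=|\alpha N|$, the kernel $K$ of the action on $\Omega/N$ is compact open normal, and the stabilizer of a block in $G^{\Omega/N}=G/K$ is finite, of order $f$ say. Now $H=G_\alpha$ fixes the block $\alpha N$, so its image in $G/K$ lies in that block stabilizer and hence has order at most $f$; therefore $H$ meets at most $f$ blocks in any one of its orbits, and since each block has $m$ points, every suborbit has at most $fm$ elements. This bounds the suborbits uniformly.

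For the converse I would assume $\sup_g[H:H\cap H^g]\le M$ and aim to produce a subgroup $N\trianglelefteq G$ commensurable with $H$, meaning $[H:H\cap N]<\infty$ and $[N:H\cap N]<\infty$. Granting such an $N$, it is open, because $H\cap N$ has finite index in the open subgroup $H$, so $N\supseteq H\cap N$ is open (and therefore closed); it has finite orbits, because $N_\alpha=N\cap H$ gives $|\alpha N|=[N:N\cap H]<\infty$, and $N$ being normal forces $|\omega N|=|\alpha N|$ for every $\omega$; and a closed subgroup all of whose orbits are finite is compact (cf.\ Lemma \ref{lem:compact-cocompact}, passing to the closure of $G$ if necessary). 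Thus $N$ is a compact, open, normal subgroup, and $G$ is nearly discrete.

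The entire weight of the converse therefore rests on producing the commensurable normal subgroup $N$ from the uniform bound, and this will be the main obstacle: it is Schlichting's group-theoretic lemma (see also Bergman--Lenstra). The approach I would take is to view conjugation as a $G$-action on the set of subgroups commensurable with $H$, with the commensurability gauge $(A,B)\mapsto[A:A\cap B]\,[B:A\cap B]$; the uniform bound says that the conjugacy orbit $\{H^g\}$ is bounded, and since the relevant indices take only finitely many values, one tries to select, canonically and $G$-equivariantly, a ``central'' representative of this bounded orbit, which is then conjugation-invariant, hence normal, and still commensurable with $H$. The delicate point will be making this selection canonical: the naive candidates, the normal core $\bigcap_g H^g$ and the normal closure $\langle H^g\rangle$, are normal but need not be commensurable with $H$, because a uniform bound on the pairwise indices $[H:H\cap H^g]$ does not bound the index of an infinite intersection of conjugates. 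The construction must instead isolate a finite-index-controlled invariant subgroup by a counting/median argument on cosets, and it is precisely there that uniform boundedness, as opposed to mere commensuration, is used and where the real difficulty lies.
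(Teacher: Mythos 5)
Your architecture matches the standard route, and the easy direction is complete: the reduction of ``bounded suborbits'' to uniform commensuration $\sup_{g\in G}[H:H\cap H^g]\le M$ of the stabilizer $H=G_\alpha$ is exactly right, and your forward argument (blocks of size $m$, finite block stabilizer of order $f$ in $G/K$, hence suborbits of size at most $fm$) is correct. For calibration: the paper offers no proof of this statement at all --- it is quoted with a citation to Schlichting's 1980 paper --- so the entire mathematical content of the theorem \emph{is} the group-theoretic lemma you defer to.

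That is precisely where your proposal has a genuine gap: the converse is not proved, only reduced to ``Schlichting's group-theoretic lemma,'' which is the theorem being established in group-theoretic rather than topological clothing; invoking it inside a proof of this statement is circular unless you actually construct $N$. Your sketch (a canonical, conjugation-equivariant ``center'' of the bounded conjugacy orbit $\{H^g\}$) is the right intuition but is not carried out, and as you yourself observe, the naive candidates fail. The missing ingredient is the combinatorial core due to Schlichting (and Bergman--Lenstra): a uniform bound $M$ on the pairwise indices $[H:H\cap H^g]$ implies a bound, depending only on $M$ and uniform in $k$, on the indices $[H:H\cap H^{g_1}\cap\cdots\cap H^{g_k}]$ of \emph{all} finite intersections of conjugates. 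Granted this, one may take $N$ to be, say, the intersection of all conjugates of $H$ whose index in $H$ is at most that bound --- a conjugation-invariant, hence normal, subgroup commensurable with $H$. Without the uniform-intersection bound, none of the candidates in your sketch can be shown commensurable with $H$, so the heart of the theorem remains unproven. Two smaller topological points to tighten if you complete the argument: a finite-index subgroup of the open group $H$ is not automatically open in a topological group, so openness should be routed through the kernel of the action on $\Omega/N$ being a pointwise stabilizer of a finite set, as in the paper's block-discrete lemma; and the compactness step via Lemma \ref{lem:compact-cocompact} requires $G$ closed in $\mathrm{Sym}(\Omega)$ --- your parenthetical ``passing to the closure of $G$'' does not obviously return a compact, open, normal subgroup of $G$ itself, since $G\cap\overline{N}$ need not be compact when $G$ is not closed.
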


Turning to Cayley--Abels graphs we get the following corollary.

\begin{corollary}
Let $G$ be a compactly generated, totally disconnected, locally compact group.  Then $G$ is nearly discrete if and only if $G$ has a discrete action on some Cayley--Abels graph.
\end{corollary}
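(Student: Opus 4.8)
The plan is to prove the two implications separately, leaning on the preceding lemma on block-discrete actions (Cornulier's Fact 5.6) for one direction and on the first explicit construction of a Cayley--Abels graph for the other. The statement is in essence a repackaging of those two ingredients, so the work is mostly in checking that the relevant hypotheses are met.

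For the reverse implication, suppose $G$ acts discretely on a Cayley--Abels graph $\Gamma$ and set $\Omega = \V\Gamma$. Since $\Gamma$ is a Cayley--Abels graph, the point stabilizers of this action are compact, open subgroups, so the hypotheses of the block-discrete lemma are satisfied. A discrete action is a special case of a block-discrete one: take the congruence $\sim$ to be equality, whose classes are singletons, so that $\Omega/\sim = \Omega$ and $G^{\Omega/\sim} = G^\Omega$ has finite point stabilizers by the very definition of a discrete action. Hence the action is block-discrete, and the block-discrete lemma immediately yields that $G$ is nearly discrete. (Alternatively, one can argue directly: the paragraph following the definition of a discrete action shows the kernel $K$ is a compact, open subgroup of $G$ in the permutation topology, and since $K$ is normal, contained in a compact stabilizer, and closed as the kernel of a continuous homomorphism, it is a compact, open, normal subgroup of $G$.)

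For the forward implication, suppose $G$ is nearly discrete and let $N$ be a compact, open, normal subgroup. I would build the required graph via the first construction with $U = N$. Because $N$ is open and $G$ is compactly generated, the quotient $G/N$ is a finitely generated discrete group; choosing a finite symmetric generating set and running the first construction produces a connected, locally finite graph $\Gamma$ on vertex set $G/N$ on which $G$ acts vertex-transitively, with every vertex stabilizer equal to $N$ (normality makes all conjugates of $N$ coincide) — that is, a Cayley--Abels graph. It remains to see that this action is discrete. Since $N$ is normal, the kernel of the action of $G$ on $G/N$ is $\bigcap_{g} g^{-1} N g = N$, so $G^\Gamma = G/N$ and the stabilizer of each vertex in $G^\Gamma$ is trivial, hence finite; thus the action is discrete.

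I expect no genuine obstacle, only routine bookkeeping: in the forward direction one must confirm that $G/N$ is finitely generated (so that $\Gamma$ is locally finite) and that the vertex stabilizers are compact and open (they all equal $N$); in the reverse direction the single thing to verify is that a discrete action qualifies as block-discrete, after which the cited lemma does the work. A more uniform route for the forward direction would instead apply the block-discrete lemma to an \emph{arbitrary} Cayley--Abels graph $\Gamma$ and then pass to the quotient graph $\Gamma/\sim$ along the witnessing congruence; since the classes of $\sim$ are finite, the stabilizers in $G$ of the classes remain compact and open, so $\Gamma/\sim$ is again a Cayley--Abels graph, and the induced action is discrete by construction.
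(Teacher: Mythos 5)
Your proposal is correct, but the forward direction takes a genuinely different route from the paper. The paper starts from an \emph{arbitrary} Cayley--Abels graph $\Gamma$ and passes to the quotient $\Gamma/K$ by the orbits of the compact, open, normal subgroup $K$: since $K$ has finite orbits (part 3 of Lemma \ref{lem:compact-cocompact}), $\Gamma/K$ is again a connected, locally finite Cayley--Abels graph, and the action on it is discrete because the kernel contains the open subgroup $K$. You instead build a fresh graph from scratch, running Construction 1 with $U=N$ to obtain the Cayley graph of the finitely generated discrete quotient $G/N$, on which $G^\Gamma=G/N$ acts with trivial vertex stabilizers. Both arguments are sound (your bookkeeping — $G/N$ finitely generated since $N$ is open and $G$ compactly generated, all stabilizers equal to $N$ by normality, kernel equal to $N$ — checks out, modulo discarding any $g_i\in N$ to avoid loops). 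The trade-off: the paper's quotient construction transforms whatever graph you are handed, which is precisely what makes the refinement in Corollary \ref{cor:nearly discrete implies discrete action on a minimal CA graph} immediate — apply it to a minimal degree graph and Lemma \ref{lem:compact normal subgroups} controls the degree — whereas your from-scratch construction gives no degree control but is more explicit and yields the stronger conclusion that the induced group acts freely, indeed regularly, on the vertices. Your reverse direction, whether routed through Cornulier's block-discrete lemma with the trivial congruence or through the direct kernel argument you give in parentheses, is the same content as the paper's appeal to the remarks following the definition of a discrete action.
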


\begin{proof}
Suppose $G$ is nearly discrete and that $K$ is a compact, open, normal subgroup. Assume $\Gamma$ is a Cayley--Abels graph for $G$.   Then $\Gamma/K$ is connected and locally finite (recall that by part 3 in Lemma~\ref{lem:compact-cocompact} the orbits of $K$ are finite).  Hence $\Gamma/K$ is also a Cayley--Abels graph for $G$ and the action of $G$ on $\Gamma/K$ is discrete.

Conversely, if $G$ has a discrete action on some Cayley--Abels graph then we have already seen that $G$ is nearly discrete.  
\end{proof}

 \begin{remark}  Let $G$ be a compactly generated, totally disconnected, locally compact group and $\Gamma$ a Cayley--Abels graph for $G$. Let $H \leq G$ be a closed, co-compact subgroup.  Then $H$ itself is compactly generated and every Cayley--Abels graph for $H$ is quasi-isometric to $\Gamma$, see \cite[Corollary 2.11]{KronMoller2008}.  Suppose now that $K$ is a compact, normal subgroup of $G$.  Then $\Gamma/K$ is a Cayley--Abels graph for both $G$ and $G/K$.  Thus $\Gamma$ and $\Gamma/K$ are quasi-isometric. Conclusively, all Cayley--Abels graphs for $G$, $H$ and $G/K$ are quasi-isometric.
 \end{remark}

 If $G$ is nearly discrete then one can show by using a construction of Sabidussi, \cite[Theorem 4]{Sabidussi1964}, that there is a Cayley--Abels graph $\Gamma$ such that $G^\Gamma$ acts regularly on $\Gamma$. 

In some cases one can determine from the local action on a Cayley--Abels graph that the action on the graph is discrete.

\begin{lemma}\label{lem:discrete}
Let $G$ be a compactly generated, totally disconnected, locally compact group.  Suppose $\Gamma$ is a Cayley--Abels graph for $G$.  If the local action of $G$ is trivial then $G^\Gamma$ acts regularly on $\Gamma$ and if the local action of $G^\Gamma$ is free then $G^\Gamma$ acts freely on the arcs of $\Gamma$.
In particular, $G$ acts discretely on $\Gamma$.
\end{lemma}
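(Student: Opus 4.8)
The plan is to exploit the connectedness of $\Gamma$ together with a propagation argument along shortest paths from a fixed base vertex $\alpha$. Throughout, let $K$ denote the kernel of the action of $G$ on $\V\Gamma$, so that $G^\Gamma = G/K$ acts faithfully on the vertices and $(G^\Gamma)_\alpha = G_\alpha/K$. Since $G$ is vertex-transitive and the local action is defined only up to conjugacy, if it is trivial (respectively semi-regular) at one vertex then it has the same form at \emph{every} vertex; I will use this freely.

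For the first assertion, assume the local action is trivial, i.e.\ $G_\beta$ fixes $\Gamma(\beta)$ pointwise for every vertex $\beta$. I would show that any $g \in G_\alpha$ lies in $K$, whence $(G^\Gamma)_\alpha = G_\alpha/K$ is trivial; combined with vertex-transitivity this gives regularity. The argument is an induction on $d_\Gamma(\alpha, v)$: if $g$ fixes every vertex at distance $\le n$ and $v$ is at distance $n+1$, choose a neighbour $w$ of $v$ lying at distance $n$ (the penultimate vertex of a shortest path); since $g$ fixes $w$ and the local action at $w$ is trivial, $g$ fixes all of $\Gamma(w)$, in particular $v$. Hence $g$ fixes $\V\Gamma$, so $g \in K$.

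For the second assertion, assume the local action is semi-regular, so that for each vertex $w$ the stabilizer in $(G^\Gamma)_w$ of a single neighbour is trivial. The stabilizer of an arc $(\alpha,\beta) \in \A\Gamma$ in $G^\Gamma$ is $(G^\Gamma)_\alpha \cap (G^\Gamma)_\beta$, and I must show it is trivial. Let $\bar g$ fix both $\alpha$ and the neighbour $\beta$; semi-regularity at $\alpha$ then forces $\bar g$ to fix all of $\Gamma(\alpha)$. Now I run the same distance induction: having shown $\bar g$ fixes every vertex at distance $\le n$ (for $n \ge 1$), and given $v$ at distance $n+1$ with neighbour $w$ at distance $n$, the vertex $w$ already has a fixed neighbour $u$ at distance $n-1$ by the inductive hypothesis; feeding $u$ into semi-regularity at $w$ forces $\bar g$ to fix all of $\Gamma(w)$, hence $v$. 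Thus $\bar g$ fixes $\V\Gamma$, and by faithfulness $\bar g = 1$, so $G^\Gamma$ is free on arcs.

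Finally, discreteness follows in both cases, since a trivial local action is in particular semi-regular: the stabilizer $(G^\Gamma)_\alpha$ acts semi-regularly on the finite set $\Gamma(\alpha)$, so the orbit map $h \mapsto \beta h$ is injective and $|(G^\Gamma)_\alpha| \le |\Gamma(\alpha)| < \infty$. Thus point stabilizers in $G^\Gamma$ are finite, which is exactly the assertion that $G$ acts discretely. The only genuinely subtle point—and the main thing to get right—is arranging the induction in the second assertion so that a previously fixed neighbour is always available to activate the semi-regularity hypothesis at each new vertex; this is precisely where connectedness does the work, and it explains why semi-regularity (which needs one fixed neighbour to conclude) suffices in place of the stronger triviality used in the first part.
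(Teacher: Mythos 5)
Your proof is correct and follows essentially the same route as the paper's: the paper also propagates fixedness along the connected graph, observing in the free case that the arc stabilizer $G_{\alpha\beta}$ fixes $\Gamma(\alpha)$ and $\Gamma(\beta)$ pointwise and then spreads outward, which is exactly your distance induction with a previously fixed neighbour activating semi-regularity at each step. Your version merely makes the paper's appeal to connectivity explicit, and your finiteness count $|(G^\Gamma)_\alpha| \le |\Gamma(\alpha)|$ for discreteness matches the intended conclusion.
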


\begin{proof}
We first consider the case where the local action is trivial.  If $\alpha\in\V\Gamma$, then $G_\alpha$ fixes every vertex in $\Gamma(\alpha)$.  If $\beta\in \Gamma(\alpha)$ then $G_\alpha$ fixes $\beta$ and thus every vertex in $\Gamma(\beta)$ is also fixed.  Since the graph $\Gamma$ is connected we see that $G_\alpha$ fixes every vertex in $\Gamma$, i.e.\ $G^\Gamma$ acts regularly.

Suppose the action of $G_\alpha$ on $\Gamma(\alpha)$ is free.  Let $\beta$ be a vertex in $\Gamma(\alpha)$.  By assumption $G_{\alpha\beta}$ fixes every vertex in $\Gamma(\alpha)$ and also every vertex in $\Gamma(\beta)$.   The conclusion now follows again from the connectivity of $\Gamma$. 
\end{proof}

Next we look at compact, normal subgroups in relation to minimal degree Cayley--Abels graphs.  The following lemma is well-known.

\begin{lemma}\label{lem:compact normal subgroups}
Let $\Gamma$ be a locally finite graph and $G \leq \Aut(\Gamma)$ a vertex-transitive subgroup.
 Suppose $N$ is a normal subgroup of $G$.
 Then, the degree of $\Gamma$ is bigger or equal to the degree of $\Gamma/N$. Equality holds if and only if no vertices of distance at most $2$ lie in the same $N$-orbit.
\end{lemma}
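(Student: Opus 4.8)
The plan is to reduce everything to a local count at a single vertex. Since $N$ is normal in $G$ and $G$ acts vertex-transitively on $\Gamma$, the $N$-orbits form a $G$-congruence, so the quotient $\Gamma/N$ is defined and $G$ acts vertex-transitively on it as well; in particular both $\Gamma$ and $\Gamma/N$ are regular and their degrees are well defined. First I would fix a vertex $\alpha\in\V\Gamma$, write $A=\alpha N$ for the vertex of $\Gamma/N$ it represents, and introduce the map $\pi\colon\Gamma(\alpha)\to\V(\Gamma/N)$, $\beta\mapsto\beta N$.

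The heart of the argument is the identification of the neighbours of $A$ in $\Gamma/N$ with $\pi(\Gamma(\alpha))\setminus\{A\}$. One inclusion is immediate: if $\beta\in\Gamma(\alpha)$ and $\beta N\neq A$, then $\beta N$ is adjacent to $A$, since $\alpha$ and $\beta$ witness an edge between the two orbits. For the reverse inclusion, given a neighbour $B\neq A$ of $A$, I would pick adjacent vertices $\alpha'\in A$ and $\beta'\in B$; writing $\alpha'=\alpha n$ with $n\in N$ and using that $n$ acts as a graph automorphism, the pair $\{\alpha,\beta'n^{-1}\}$ is again an edge, so $B=\beta'n^{-1}N=\pi(\beta'n^{-1})$ lies in the image of $\pi$. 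Consequently the degree of $\Gamma/N$ equals $|\pi(\Gamma(\alpha))\setminus\{A\}|\leq|\Gamma(\alpha)|$, which is the degree of $\Gamma$; this proves the inequality.

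For the equality statement I would observe that the previous bound is an equality precisely when two things happen at once: $A\notin\pi(\Gamma(\alpha))$, and $\pi$ is injective on $\Gamma(\alpha)$. The first condition says that no neighbour of $\alpha$ lies in $\alpha N$, i.e.\ no two vertices at distance $1$ share an $N$-orbit. The second condition says that distinct neighbours of $\alpha$ lie in distinct $N$-orbits; since two distinct neighbours of $\alpha$ are at distance $1$ or $2$, and conversely every pair of vertices at distance $2$ has a common neighbour, this is equivalent (given the first condition) to the statement that no two vertices at distance $2$ share an $N$-orbit. Combining the two, and using vertex-transitivity to see that it suffices to test these conditions at the single vertex $\alpha$, equality holds if and only if no two distinct vertices of distance at most $2$ lie in the same $N$-orbit.

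I expect the reverse inclusion in the neighbour identification to be the only real obstacle: it is exactly there that normality of $N$ is used, via the fact that elements of $N$ act as automorphisms of $\Gamma$ and so can slide an arbitrary edge between two orbits back to an edge incident with the chosen representative $\alpha$. The remaining care is bookkeeping---checking that the two separate equality conditions (no loop and injectivity) package together precisely into the single ``distance at most $2$'' condition, i.e.\ that distance-$2$ pairs are exactly the non-adjacent pairs sharing a common neighbour.
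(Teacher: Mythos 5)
Your proof is correct and follows essentially the same route as the paper: the paper's observation that every vertex of $A$ has a neighbour in each adjacent class $B$ is exactly your sliding step $\{\alpha,\beta' n^{-1}\}$, and your equality analysis (no neighbour of $\alpha$ in $\alpha N$, plus injectivity of $\pi$ on $\Gamma(\alpha)$) is the paper's criterion, which you simply translate into the distance-at-most-$2$ formulation more explicitly. One small quibble with your closing remark: the reverse inclusion uses only that $N \leq \Aut(\Gamma)$ preserves its own orbits, not normality---normality enters (as you correctly note at the outset) to make the $N$-orbits a $G$-congruence, so that $\Gamma/N$ is vertex-transitive and its degree is well defined.
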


\begin{proof}
Consider two adjacent vertices $A$ and $B$ in $\Gamma/N$.    Since $A$ and $B$ are adjacent in $\Gamma/N$ there are vertices $\alpha, \beta\in \V\Gamma$ such that $\alpha\in A$, $\beta\in B$ and $\alpha$ and $\beta$ are adjacent in $\Gamma$.  Both $A$ and $B$ are $N$-orbits in $\V\Gamma$ and hence we see that every vertex in $A$  has a neighbour in $B$.   
Hence the degree of $\Gamma/N$ is at most equal to the degree of $\Gamma$.  

The degree of $\Gamma$ equals the degree of $\Gamma/N$ if and only if for every vertex $\alpha\in \V\Gamma$ its neighbours belong to different $N$-orbits and none of them belongs to the $N$-orbit of $\alpha$. Thus the stabilizer of $\alpha$ in $N$ must act trivially on the neighbourhood of $\alpha$ in $\Gamma$.    The same argument as in the proof of Lemma~\ref{lem:discrete} shows now that the stabilizer in $N$ of a vertex in $\Gamma$ acts trivially on $\Gamma$.
\end{proof}

This lemma directly implies the following.

\begin{corollary}
Let $G$ be a compactly generated, totally disconnected, locally compact group.  Suppose $\Gamma$ is a Cayley--Abels graph for $G$ with minimal degree.
If $N$ is a compact, normal subgroup of $G$, then $N^\Gamma$ acts freely on $\V\Gamma$ and no two vertices in the same $N$-orbit are adjacent.
In particular $N^\Gamma$ is finite.
\end{corollary}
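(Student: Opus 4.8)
The plan is to play the minimality of $\Gamma$ against the degree inequality of Lemma~\ref{lem:compact normal subgroups} to force an equality, and then read off all three conclusions directly from the equality case.

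First I would observe that $\Gamma/N$ is again a Cayley--Abels graph for $G$. Since $N$ is compact, all of its orbits on $\V\Gamma$ are finite by Lemma~\ref{lem:compact-cocompact}(2), so the quotient $\Gamma/N$ is locally finite; it is clearly connected because $\Gamma$ is, and $G$ still acts vertex-transitively on it with compact, open vertex stabilizers (the stabilizer of an $N$-orbit contains the compact open stabilizer of a vertex of $\Gamma$ as a finite-index subgroup, hence is itself compact and open). Thus $\Gamma/N$ qualifies as a Cayley--Abels graph for $G$.

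Next, because $\Gamma$ was chosen to have minimal degree among Cayley--Abels graphs for $G$, the degree of $\Gamma$ is at most the degree of $\Gamma/N$. On the other hand, Lemma~\ref{lem:compact normal subgroups} gives the reverse inequality, namely that the degree of $\Gamma$ is at least the degree of $\Gamma/N$. Therefore the two degrees coincide, and I may invoke the equality clause of Lemma~\ref{lem:compact normal subgroups}.

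That equality clause states precisely that no two vertices of distance at most $2$ lie in the same $N$-orbit; in particular no two vertices in the same $N$-orbit are adjacent, which is the second assertion. The same equality case, argued as in the proof of Lemma~\ref{lem:compact normal subgroups} (which in turn reduces to the connectivity argument of Lemma~\ref{lem:discrete}), shows that the stabilizer in $N$ of any vertex acts trivially on the whole of $\Gamma$; hence the stabilizer of a vertex in $N^\Gamma$ is trivial and $N^\Gamma$ acts freely on $\V\Gamma$. Finally, since $N^\Gamma$ acts freely and its orbits on $\V\Gamma$ are finite, each orbit has cardinality $|N^\Gamma|$, so $N^\Gamma$ is finite. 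There is no genuine obstacle here: the entire content lies in setting up the degree comparison so that Lemma~\ref{lem:compact normal subgroups} applies with equality, and the only point demanding a moment's care is confirming that $\Gamma/N$ really is a Cayley--Abels graph, for which the compactness of $N$ (and the resulting finiteness of its orbits) is exactly what is required.
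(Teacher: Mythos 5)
Your proof is correct and takes essentially the same approach as the paper, which presents this corollary as a direct consequence of Lemma~\ref{lem:compact normal subgroups} via exactly the argument you spell out: $\Gamma/N$ is again a Cayley--Abels graph because the compact group $N$ has finite orbits, so minimality of the degree of $\Gamma$ forces equality in the lemma. The equality clause then rules out adjacent (indeed distance-$\leq 2$) vertices in a common $N$-orbit, the connectivity argument of Lemma~\ref{lem:discrete} upgrades trivial local action of vertex stabilizers in $N$ to freeness of $N^\Gamma$, and finiteness of $N^\Gamma$ follows since a free action with finite orbits has $|N^\Gamma|$ equal to the (finite) orbit size.
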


\begin{corollary}\label{cor:nearly discrete implies discrete action on a minimal CA graph}
Let $G$ be  a compactly generated, totally disconnected, locally compact group.  If $G$ is nearly discrete, then $G$ acts discretely on some minimal degree Cayley--Abels graph.
\end{corollary}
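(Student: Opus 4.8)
The plan is to take any minimal degree Cayley--Abels graph and show that near-discreteness already forces the action on it to be discrete, so that no further modification of the graph is needed. The engine is the preceding corollary, which says that on a minimal degree Cayley--Abels graph every compact, normal subgroup acts freely on the vertices; the whole argument is then a short bookkeeping deduction.

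First I would fix the data. Since $G$ is nearly discrete, choose a compact, open, normal subgroup $K \trianglelefteq G$. Since $G$ is compactly generated it admits Cayley--Abels graphs; pick one, $\Gamma$, of minimal degree $\md(G)$, and let $\alpha \in \V\Gamma$ be a vertex. Write $G^\Gamma$ for the permutation group induced on $\V\Gamma$ and $K_0$ for the kernel of the action homomorphism $G \to G^\Gamma$. The goal is to prove that the stabilizer of $\alpha$ in $G^\Gamma$ is finite, which is exactly what a discrete action means.

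Next I would apply the preceding corollary to the compact normal subgroup $N = K$: it yields that $K^\Gamma$ acts freely on $\V\Gamma$, so the stabilizer of $\alpha$ in $K^\Gamma$ is trivial. Unwinding the definitions, this stabilizer is the image of $K \cap G_\alpha$ under $G \to G^\Gamma$, so its triviality says precisely $K \cap G_\alpha \subseteq K_0$. On the other hand, $G_\alpha$ is compact (vertex stabilizers in a Cayley--Abels graph are compact open) and $K$ is open, so $K \cap G_\alpha$ is an open, hence finite-index, subgroup of $G_\alpha$. Since the stabilizer of $\alpha$ in $G^\Gamma$ equals $G_\alpha/(G_\alpha \cap K_0)$ and $K \cap G_\alpha \subseteq G_\alpha \cap K_0$, this stabilizer is a quotient of the finite group $G_\alpha/(K \cap G_\alpha)$ and is therefore finite. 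As $\alpha$ was arbitrary, every vertex stabilizer in $G^\Gamma$ is finite, so $G$ acts discretely on the minimal degree graph $\Gamma$, which is the assertion.

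I do not expect a genuine obstacle here: all of the real work is carried by the minimal-degree hypothesis through the preceding corollary, which converts ``compact normal'' into ``acts freely.'' The only point requiring care is the elementary identification of the stabilizer of $\alpha$ in $K^\Gamma$ with the image of $K \cap G_\alpha$, and the translation of ``acts freely'' into the containment $K \cap G_\alpha \subseteq K_0$; after that, finiteness is just the finite-index estimate $|G_\alpha : K \cap G_\alpha| < \infty$ coming from $K$ open and $G_\alpha$ compact.
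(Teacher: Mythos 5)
Your proof is correct, and it takes a genuinely different route from the paper's. The paper's proof changes the graph: given a compact, open, normal subgroup $K$, it passes from a minimal degree Cayley--Abels graph $\Gamma$ to the quotient $\Gamma/K$, which by Lemma~\ref{lem:compact normal subgroups} together with minimality again has degree $\md(G)$, and on which the action is discrete because the kernel of the action on $\Gamma/K$ contains the open subgroup $K$ (the same mechanism already used in the corollary characterizing nearly discrete groups via quotient Cayley--Abels graphs). You instead fix an \emph{arbitrary} minimal degree graph $\Gamma$ and show the action on it is already discrete, by feeding $N=K$ into the corollary immediately preceding this one: freeness of $K^\Gamma$ on $\V\Gamma$ translates into $K\cap G_\alpha\subseteq K_0$, where $K_0$ is the kernel of $G\to G^\Gamma$, and then $|G_\alpha:K\cap G_\alpha|<\infty$ (as $K$ is open and $G_\alpha$ compact) exhibits the stabilizer $G_\alpha/(G_\alpha\cap K_0)$ of $\alpha$ in $G^\Gamma$ as a quotient of a finite group. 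The identifications you flag as the only delicate points --- that the stabilizer of $\alpha$ in $K^\Gamma$ is the image of $K\cap G_\alpha$, and that freeness means this image is trivial --- are routine and correct. The trade-off: the paper's argument is a one-line reduction to machinery already displayed, while yours proves something strictly stronger than the stated corollary, namely that a nearly discrete group acts discretely on \emph{every} minimal degree Cayley--Abels graph, not merely on some, so no modification of the graph is ever needed. Both proofs ultimately rest on the same underlying fact, Lemma~\ref{lem:compact normal subgroups}, since the free-action corollary you invoke is itself derived from it.
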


\begin{proof}
Let $\Gamma$ be some minimal degree Cayley--Abels graph.  If $K$ is a compact, open, normal subgroup then $\Gamma/K$ is also a minmal degree Cayley--Abels graph for $G$ and $G$ acts discretely on $\Gamma/K$.
\end{proof}

\section{Groups with minimal degree 2 or 3}\label{sec:degree2or3}

In this section we first characterize those compactly generated, totally disconnected, locally compact groups where the minimal degree of a Cayley--Abels graph is $2$. Note that a $2$-regular minimal degree Cayley--Abels graph can only be the infinite line.  We also look at the case when the minimal degree is equal to 3, for proofs of those results the reader is referred to a companion paper, \cite{ArnadottirLederleMoller2020}, to this paper.

Let $\Gamma$ be any graph. A \emph{ray} is a $1$-way infinite arc such that all its vertices are different.
An \emph{end} of $\Gamma$ is an equivalence class of rays, where the ray $(\alpha_0,\alpha_1,\dots)$ is equivalent to $(\beta_0,\beta_1,\dots)$ if and only if there is a ray $(\gamma_0,\gamma_1,\dots)$ such that the intersections $\{\gamma_i \mid i \geq 0\} \cap \{\alpha_i \mid i \geq 0\}$ and $\{\gamma_i \mid i \geq 0\} \cap \{\beta_i \mid i \geq 0\}$ are both infinite. 

If two graphs are quasi-isometric then they have the same number of ends, see 
\cite[Proposition~ 1]{Moller1992a}.  As any two Cayley--Abels graphs of a group $G$ are quasi-isometric (see \cite[Theorem 2.7]{KronMoller2008}), we can define the number of ends of a compactly generated, totally disconnected, locally compact group as the number of ends of a Cayley--Abels graph.  

\begin{theorem}\label{T2ends}
 For a compactly generated, totally disconnected, locally compact group $G$ the following conditions are equivalent.
 \begin{enumerate}
     \item The minimal degree of a Cayley--Abels graph for $G$ is 2.
     \item The group $G$ has precisely two ends.
     \item There is a continuous surjective homomorphism with a compact, open kernel from $G$ onto the infinite cyclic group or the infinite dihedral group (both with the discrete topology).
     \item The group $G$ has a co-compact cyclic discrete subgroup.
 \end{enumerate}
\end{theorem}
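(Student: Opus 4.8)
The plan is to prove the four statements equivalent by running the cycle $(1)\Rightarrow(3)\Rightarrow(4)\Rightarrow(2)\Rightarrow(1)$, with the last arrow carrying essentially all the difficulty. Before starting I would record two harmless conventions: each of $(1)$, $(2)$, $(3)$ forces $G$ to be non-compact (a compact group has a finite Cayley--Abels graph, so $0$ ends and $\md=0$, and it admits no continuous surjection onto an infinite discrete group), and in $(4)$ I read ``cyclic'' as \emph{infinite} cyclic, so that the degenerate compact case, in which $\{1\}$ is trivially co-compact, is excluded. With this understood the four conditions all entail that $G$ is non-compact.

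For $(1)\Rightarrow(3)$ I would argue that a degree-$2$ Cayley--Abels graph $\Gamma$ must be the two-way infinite line $L$: a connected $2$-regular graph is a finite cycle or $L$, and a cycle has a finite vertex set, which with compact open stabilizers would make $G$ compact and hence give $\md(G)=0$, a contradiction. Since $\Aut(L)\cong D_\infty$, the action is a continuous homomorphism $\varphi\colon G\to D_\infty$ whose image $G^\Gamma$ is a vertex-transitive, hence infinite, subgroup of $D_\infty$; the infinite subgroups of $D_\infty$ are exactly those isomorphic to $\ZZ$ or $D_\infty$. Because the vertex stabilizers of $D_\infty$ acting on $L$ have order at most $2$, the induced action of $G^\Gamma$ is discrete, so by the discussion following the definition of a discrete action the kernel $K=\ker\varphi$ is a compact, open, normal subgroup. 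This is precisely $(3)$.

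For $(3)\Rightarrow(4)$, given $\varphi\colon G\twoheadrightarrow Q$ with $Q\in\{\ZZ,D_\infty\}$ and $K=\ker\varphi$ compact open, I would pick an infinite-order $t\in Q$ generating a finite-index copy of $\ZZ$ (take $t=1$ if $Q=\ZZ$, a translation generator if $Q=D_\infty$) and lift it to $g\in\varphi^{-1}(t)$. Then $\langle g\rangle\cap K=\{1\}$ since $t$ has infinite order, so $\langle g\rangle$ is discrete and infinite cyclic; moreover $\langle g\rangle K=\varphi^{-1}(\langle t\rangle)$ is a finite-index, hence co-compact, open subgroup, inside which $\langle g\rangle$ is co-compact because $\langle g\rangle K/\langle g\rangle$ is a continuous image of the compact set $K$, and co-compactness is transitive. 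This gives $(4)$. For $(4)\Rightarrow(2)$ I would use that a discrete subgroup of the Hausdorff group $G$ is closed, so the co-compact $H\cong\ZZ$ is a closed co-compact subgroup; by the quasi-isometry invariance recalled above (\cite[Corollary~2.11]{KronMoller2008}) every Cayley--Abels graph of $H$ is quasi-isometric to one of $G$. As $H$ is discrete, its Cayley--Abels graph is an ordinary Cayley graph of $\ZZ$, i.e.\ the line, which has two ends, and quasi-isometric graphs have equally many ends; hence $G$ has two ends.

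The main obstacle is $(2)\Rightarrow(1)$, which I would factor as $(2)\Rightarrow(3)$ followed by the easy converse of the first step. That converse $(3)\Rightarrow(1)$ is immediate: pulling back the vertex-transitive action of $Q\in\{\ZZ,D_\infty\}$ on $L$ along $\varphi$ gives a vertex-transitive action of $G$ on $L$ with stabilizers $\varphi^{-1}(\text{finite})$ compact and open, so $L$ is a degree-$2$ Cayley--Abels graph and $\md(G)\le 2$; as $G$ is non-compact, $\md(G)\ge 1$, and $\md$ is never $1$, forcing $\md(G)=2$. The real content is $(2)\Rightarrow(3)$. Starting from a Cayley--Abels graph $\Gamma$ with exactly two ends, I would first split the action over the two ends, passing to the index-$\le 2$ subgroup $G^+$ fixing each end, and then invoke the structure theory of ends of locally finite transitive graphs (nested Dunwoody cuts and structure trees, as developed by Dunwoody, Thomassen--Woess and M\"oller) to produce a $G$-invariant system of imprimitivity with finite blocks indexed by $\ZZ$ in which only consecutive blocks are joined by edges, i.e.\ a $G$-equivariant quotient of $\Gamma$ isomorphic to the line. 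The induced homomorphism $G\to\Aut(L)\cong D_\infty$ then has image $\ZZ$ or $D_\infty$, and its kernel stabilizes each (finite) block, so it has finite orbits and is therefore compact, and it is open; this yields $(3)$. The hard step is exactly the construction of this finite, $\ZZ$-indexed, $G$-invariant block system out of two-endedness, together with the verification that the resulting kernel is simultaneously compact and open: the ends machinery delivers the block system, while local finiteness and the compact-open stabilizer hypothesis give the kernel its topological properties.
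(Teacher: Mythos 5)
Your proposal is correct in substance, but it is organized quite differently from the paper's proof, which is much shorter and leans on the literature: the paper proves $(1)\Rightarrow(2)$ trivially (a degree-$2$ Cayley--Abels graph is the line, which has two ends), cites Abels' Satz~4.5 (alternatively M\"oller--Seifter) for $(2)\Rightarrow(3)$, proves $(3)\Rightarrow(1)$ directly by pulling back the regular actions of $\ZZ$ and $D_\infty$ on the line exactly as you do, and then disposes of $(4)$ in one stroke by citing Jung--Watkins, Abels' Satz~3.10, or the companion paper for the equivalence $(2)\Leftrightarrow(4)$. You instead run the cycle $(1)\Rightarrow(3)\Rightarrow(4)\Rightarrow(2)\Rightarrow(3)\Rightarrow(1)$, which buys two things the paper does not supply: a self-contained, elementary proof of $(3)\Rightarrow(4)$ (lifting a translation generator, checking $\langle g\rangle\cap K=\{1\}$ and transitivity of co-compactness) and of $(4)\Rightarrow(2)$ (discrete subgroups are closed, then quasi-isometry invariance of ends via \cite[Corollary 2.11]{KronMoller2008}), whereas the paper treats $(2)\Leftrightarrow(4)$ entirely as a citation; your direct $(1)\Rightarrow(3)$ via $\Aut(L)\cong D_\infty$ and the classification of its infinite subgroups is also sound, though redundant once you have $(1)\Rightarrow(2)\Rightarrow(3)$. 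The one place where you, like the paper, outsource the real work is $(2)\Rightarrow(3)$: your sketch via Dunwoody cuts and structure trees producing a $G$-invariant finite block system indexed by $\ZZ$ is exactly the content of Abels' Satz~4.5, so your treatment is at the same level of rigor as the paper's citation (and your argument that the kernel of the resulting action on the line is open could be made explicit: an element fixing two adjacent blocks maps to an element of $D_\infty$ fixing two adjacent vertices of the line, hence lies in the kernel, so the kernel equals the intersection of two open block stabilizers). Finally, your observation that $(4)$ must be read with ``cyclic'' meaning \emph{infinite} cyclic --- since for compact $G$ the trivial subgroup is a co-compact cyclic discrete subgroup while $(1)$--$(3)$ fail --- is a genuine point of care that the paper passes over silently.
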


\begin{proof}
First note that (1) implies (2) trivially because if $\md(G)=2$ then the integer graph $\ZZ$ is a Cayley--Abels graph for $G$, so $G$ has two ends.  Then (2) implies (3) by \cite[Satz 4.5]{Abels1974}, see also \cite[Proposition 3.2]{MollerSeifter1998}. 
To prove that (3) implies (1), note that
the groups $\ZZ$ and $D_\infty$ both have regular actions on the integer graph $\ZZ$. From this we see that $G$ has an action on the integer graph $\ZZ$ with a compact, open kernel and thus $\ZZ$ is an Cayley--Abels graph for $G$.

Finally we note that (2) and (4) are equivalent.  This can be seen, for example, from results in Section 5 in the paper by Jung and Watkins, \cite{JungWatkins1984} or in Abels' paper \cite[Satz 3.10]{Abels1974}.  A self-contained proof of this fact can be found in \cite[Appendix C]{ArnadottirLederleMoller2020}.
\end{proof}

\begin{remark}
In the case of a finitely generated group acting on a locally finite Cayley graph it follows from results of Hopf \cite[Satz 5]{Hopf1944} and Wall \cite[Lemma 4.1]{Wall1967} that conditions (2), (3) and (4) in the theorem above are equivalent.
\end{remark}

In \cite{ArnadottirLederleMoller2020} we study compactly generated, totally disconnected, locally compact groups that have minimal degree 3. The first part of the following theorem was previously known by experts.

\begin{theorem}{\rm{(\cite[Sections 3 and 4]{ArnadottirLederleMoller2020})}}
Let $G$ be a compactly generated, totally disconnected, locally compact group that is not nearly discrete.    Assume that $\md(G)=3$ and let $\Gamma$ be a minimal degree Cayley--Abels graph for $G$.  Then, one of following is true.
\begin{enumerate}
    \item The action of $G$ is transitive on the edges and $\Gamma$ is a tree.
    \item The action of $G$ has precisely two orbits on the edges of $\Gamma$. For every $s \geq 1$, the group $G$ acts with precisely two orbits on the subset of $s$-arcs whose underlying edges lie alternately in the two edge orbits of $G$.
\end{enumerate}
\end{theorem}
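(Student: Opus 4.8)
The plan is to run the whole argument through the \emph{local action} $L=G_\alpha^{\Gamma(\alpha)}\le S_3$ at a vertex $\alpha$, using that $\Gamma$ is cubic. First I would record that $L$ is nontrivial: if $L$ were trivial, then by Lemma~\ref{lem:discrete} the group $G^\Gamma$ would act regularly, so the action on $\Gamma$ would be discrete, and by the corollary identifying discrete actions with near-discreteness this would force $G$ to be nearly discrete, contrary to hypothesis. Up to conjugacy the remaining subgroups of $S_3$ are $C_2$, $C_3$ and $S_3$, and since $G$ is vertex-transitive the number of $G$-orbits on arcs equals the number of $L$-orbits on $\Gamma(\alpha)$. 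Thus $L\in\{C_3,S_3\}$ (transitive) produces a single arc-orbit, i.e.\ $G$ is arc-transitive, while $L=C_2$ (orbits of sizes $1$ and $2$) produces exactly two arc-orbits. This dichotomy is precisely the split between case (1) and case (2).

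For case (1) ($L$ transitive), arc-transitivity already implies edge-transitivity, so it remains only to show $\Gamma$ is a tree; this is the part the authors note is classically known. I would argue by contradiction from finite girth via a Tutte-type rigidity argument for trivalent arc-transitive graphs: one shows that in a cubic arc-transitive graph of finite girth the pointwise stabilizer of a ball of bounded radius is trivial, whence each vertex stabilizer $G_\alpha$ is finite. A finite $G_\alpha$ makes the action on $\Gamma$ discrete, hence $G$ nearly discrete, again contradicting the hypothesis. Therefore $\Gamma$ has infinite girth, i.e.\ is a tree. The only delicate point is the classical local propagation that bounds the stabilizer once a cycle is present, which I would cite rather than reprove.

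For case (2) ($L=C_2$) I would first pin down the edge structure. Since $L$ is intransitive, $G$ is not arc-transitive; combined with vertex-transitivity and the fact that the degree $3$ is odd, $G$ cannot be edge-transitive either: if it were, there would be two arc-orbits, each containing exactly one of the two arcs of every edge, forcing equal out-valencies $d_1=d_2$ with $d_1+d_2=3$, which is impossible. Hence there are at least two edge-orbits. To see there are exactly two, I would use that $G_\alpha$ fixes one neighbour (the \emph{special} neighbour) and transposes the other two, which $G$-invariantly distinguishes at each vertex one special edge from two ordinary edges. Using that arc-reversal $\iota$ is $G$-equivariant and hence permutes the two arc-orbits, while each vertex is the tail of exactly one special arc and two ordinary arcs, one sees that $\iota$ must preserve each orbit (a reversal interchanging them would equate the incompatible out-valencies $1$ and $2$). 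Consequently the special relation is symmetric, the special edges form a perfect matching (orbit $E_1$), and the remaining edges form a $2$-regular subgraph (orbit $E_2$), with $G_\alpha$ fixing the $E_1$-edge and swapping the two $E_2$-edges.

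Finally I would prove the alternating $s$-arc statement by induction on $s$, the two orbits being indexed by the type ($E_1$ or $E_2$) of the first edge; the base case $s=1$ is exactly the two arc-orbits. For the inductive step, having moved (by the inductive hypothesis) two alternating $(s+1)$-arcs so that their initial alternating $s$-arcs $P$ coincide, it remains to show the pointwise stabilizer $G_P$ acts transitively on the admissible extensions at the far endpoint $\alpha_s$. When the new edge must be of type $E_1$ there is a unique admissible edge (the matching edge at $\alpha_s$, which is not the backtrack), so transitivity is automatic; the only real content is at an $E_1\to E_2$ step, where there are two admissible $E_2$-edges and one must produce an element of $G_P$ swapping them. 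I expect this to be the main obstacle: an element of $G_{\alpha_s}$ realizing the local transposition exists, but forcing it to fix all of $P$ requires showing that the local $C_2$ survives in the stabilizer of arbitrarily long alternating arcs. This is exactly where non-near-discreteness enters: by Schlichting's theorem the suborbits are unbounded, and a propagation argument shows the frontier transpositions cannot all be killed, since were they trivial at some finite stage one could assemble a compact, open, normal subgroup (equivalently bound the suborbit sizes), contradicting the hypothesis. Carrying out this propagation carefully, and checking that no further orbit-splitting occurs, is the technical heart of the argument.
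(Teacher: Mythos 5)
The decisive error is in your case division. You claim that when the local action is $C_2$ the arc-reversal map must preserve each of the two arc-orbits, because ``a reversal interchanging them would equate the incompatible out-valencies $1$ and $2$''. That counting is only valid for finite graphs: reversal sends out-arcs at a vertex to in-arcs, so if reversal swaps the two orbits you must compare the out-valency of one orbit (which is $1$) with the \emph{in}-valency of the other (which is also $1$), and no contradiction arises. A concrete counterexample is $G=\aut(T_3)_\omega$, the stabilizer of an end of the cubic tree, acting on $\Gamma=T_3$: by Corollary~\ref{cor:end stabilizer in tree} this is a minimal degree Cayley--Abels graph for $G$, the group is not nearly discrete (it is not even unimodular), the local action is $C_2$ (a vertex stabilizer fixes the neighbour towards $\omega$ and swaps the other two), and yet $G$ is \emph{edge-transitive}, the two arc-orbits (arcs pointing towards $\omega$ and arcs pointing away) being interchanged by reversal. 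Your argument would misclassify this group into case (2) with two edge orbits, which is false; it belongs to case (1). The classical fact ``vertex- and edge-transitive of odd degree implies arc-transitive'' genuinely needs finiteness, and part of the content of case (1) is exactly that it also contains these edge- but not arc-transitive actions on trees. This breaks your treatment of case (1) as well: your Tutte-type finite-girth rigidity presupposes arc-transitivity, so for local action $C_2$ with reversal-swapped orbits you have no argument that $\Gamma$ is a tree (one would need the edge-transitive cubic local analysis in the style of Djokovi\'c--Miller/Goldschmidt, or an argument via the $G$-invariant orientation with coprime in- and out-degrees $2$ and $1$ in the spirit of Proposition~\ref{Prop:coprime}).

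Independently of this, your case (2) is not actually proved. The inductive step requires, for every alternating $s$-arc $P$, an element of the pointwise stabilizer of $P$ swapping the two admissible $E_2$-extensions at its endpoint, and you yourself flag this as ``the technical heart''. The appeal to Schlichting's theorem is only a heuristic: unboundedness of suborbits does not by itself show that the local transposition survives in the stabilizer of every alternating arc, nor does it exclude further splitting of orbits on alternating $s$-arcs; turning ``were the frontier transpositions trivial, one could assemble a compact, open, normal subgroup'' into an actual construction is precisely the nontrivial propagation argument. Note that the paper under review contains no proof of this theorem at all --- it quotes it from Sections 3 and 4 of the companion paper \cite{ArnadottirLederleMoller2020}, where both the tree dichotomy and the alternating-arc transitivity are established --- so the standard your sketch must meet is that of a self-contained argument, and as it stands both halves (the correct trichotomy of local behaviours, and the induction) have genuine gaps.
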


A totally disconnected, locally compact group $G$ is said to be {\em uniscalar} if the scale function is constant, i.e.\ $s(g)=1$ for all $g\in G$.  Equivalently, every element in the group normalizes some compact, open subgroup.

\begin{theorem}{\rm{(\cite[Theorem 6.2]{ArnadottirLederleMoller2020})}}\label{thm:not-uniscalar}
Suppose $G$ is a compactly generated, totally disconnected, locally compact group that is not nearly discrete.  If $\md(G)=3$ then $G$ is not uniscalar.   
\end{theorem}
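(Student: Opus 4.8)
The plan is to exhibit, under the standing hypotheses, a single element $g$ with $s(g)>1$, which by definition shows that $G$ is not uniscalar. Fix a minimal degree Cayley--Abels graph $\Gamma$, so that $\Gamma$ is cubic and $G$ acts vertex-transitively with compact open vertex stabilizers. The first reduction is via Proposition~\ref{prop:scale}: for a vertex $\alpha$ and $g\in G$ one has $s(g)>1$ precisely when the suborbit sizes $|(\alpha g^n)G_\alpha|$ are unbounded in $n$. Any element with a bounded orbit $\{\alpha g^n\}$ (in particular any vertex-fixing or torsion element) lies in a compact subgroup and hence has $s(g)=1$, so it suffices to produce one \emph{translation}, i.e.\ an element $g$ with $d_\Gamma(\alpha,\alpha g^n)\to\infty$ linearly in $n$, whose powers have unbounded $G_\alpha$-orbits.

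Next I would invoke the structure dichotomy from \cite{ArnadottirLederleMoller2020} quoted above. In the second case $G$ acts, for every $s\ge 1$, with only two orbits on the alternating $s$-arcs, and this strong arc-transitivity is the decisive input. Choosing a translation $g$ whose axis is an alternating bi-infinite arc, the vertex $\alpha g^n$ is the endpoint of an alternating arc of length proportional to $n$ issuing from $\alpha$, and the $G_\alpha$-orbit of that arc is, up to the two-orbit ambiguity, the whole set of alternating arcs of that length starting at $\alpha$. Since the number of such arcs grows exponentially in their length (each interior vertex of a cubic graph offers at least two forward continuations), the orbits $|(\alpha g^n)G_\alpha|$ grow exponentially and $s(g)>1$ by Proposition~\ref{prop:scale}. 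When $\Gamma$ is a tree these arcs biject with the vertices they reach, so the count is simply $|S_{cn}(\alpha)|$; in general one checks that distinct arcs produce only boundedly many coincidences, which does not affect the exponential rate.

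In the first case $\Gamma$ is a cubic tree and $G$ is edge-transitive, so the local action is a transitive subgroup of $S_3$, but $s$-arc-transitivity is \emph{not} granted and the hypothesis that $G$ is not nearly discrete must be used. By Schlichting's theorem (stated above), non-near-discreteness is equivalent to the suborbits of $G$ being unbounded. Since on a tree each $G_\alpha$-orbit is contained in a single sphere $S_n(\alpha)$ and balls are finite, there is a sequence $\beta_k\in S_{n_k}(\alpha)$ with $n_k\to\infty$ and $|\beta_k^{G_\alpha}|\to\infty$. Applying K\"onig's lemma to the geodesics $[\alpha,\beta_k]$ in the locally finite tree, a subsequence converges to a ray, hence to an end $\omega$; using edge- (hence arc-) transitivity I would produce a hyperbolic element $g$ translating toward $\omega$ of minimal positive length, and argue that the large orbits of the $\beta_k$, transported by arc-transitivity to the geodesic toward $\omega$, force the axis orbits $|(\alpha g^n)G_\alpha|$ to be unbounded, whence $s(g)>1$.

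The main obstacle is exactly this last extraction in the tree case: upgrading the merely \emph{global} fact ``some suborbits are large'' to the \emph{directional} fact ``the powers of one translation have large orbits along its axis''. The cleanest way to secure it is through the half-tree description of the scale on trees. For a translation $g$ with axis $L$ and forward end $\omega$, let $W$ be the pointwise stabilizer of the half-tree behind $\alpha$; then $W$ is compact and open, and $[W:W\cap g^{-1}Wg]$ measures the forward branchings at $\alpha$ that are genuinely realized by $W$. If this index equalled $1$ for every translation, then along every ray $G_\alpha$ would fix all but finitely many branches at each depth, forcing the suborbits to be \emph{uniformly} bounded and contradicting Schlichting's theorem. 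Hence some translation satisfies $[W:W\cap g^{-1}Wg]>1$, and a fortiori $s(g)>1$. Combining the two cases yields an element of scale greater than $1$, so $G$ is not uniscalar.
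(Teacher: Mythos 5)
Your overall architecture (reduce via Proposition~\ref{prop:scale} to finding one element with unbounded orbits $|(\alpha g^n)G_\alpha|$, then split along the dichotomy from \cite{ArnadottirLederleMoller2020}) is reasonable, and note that the present paper gives no proof to compare against --- it cites the companion paper. But your attempt has a fatal logical error at its concluding step in the tree case. The scale is defined (Definition~\ref{def:scale}) as the \emph{minimum} of $|U:U\cap g^{-1}Ug|$ over all compact \emph{open} subgroups $U$. Exhibiting a single subgroup $W$ with $[W:W\cap g^{-1}Wg]>1$ therefore proves nothing about $s(g)$; ``a fortiori $s(g)>1$'' runs the inequality backwards. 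To get $s(g)>1$ you must show the index exceeds $1$ for \emph{every} compact open subgroup, or equivalently (Proposition~\ref{prop:scale}) that the orbit sizes $|(\alpha g^n)G_\alpha|$ are unbounded. Worse, your chosen $W$, the pointwise stabilizer of a half-tree, is compact but in general \emph{not open}: it contains no pointwise stabilizer of a finite set, since an element fixing an arbitrarily large ball can still move vertices deep inside the half-tree. So $W$ does not even compete in the minimum defining $s(g)$. The auxiliary claim feeding into this step (``if the index equalled $1$ for every translation, the suborbits would be uniformly bounded'') is likewise only asserted; turning the global unboundedness of suborbits given by Schlichting's theorem into directional growth along one axis is the actual content of the tree case, and your sketch does not supply it.

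The second case has a gap of the same calibre. By Proposition~\ref{prop:scale} what must grow is $|(\alpha g^{2n})G_\alpha|$, which (granting $G_\alpha$-transitivity on alternating arcs of fixed initial colour) is the number of distinct \emph{endpoints} of alternating $2n$-arcs from $\alpha$, not the number of arcs. In case (2) the graph $\Gamma$ need not be a tree, and your parenthetical ``distinct arcs produce only boundedly many coincidences'' is precisely the point at issue: nothing in the two-orbit hypothesis, as you use it, excludes that exponentially many alternating arcs from $\alpha$ terminate in a bounded vertex set. Indeed, even the existence of your translation is unsettled: an element $g$ shifting an alternating $2$-arc along itself (which the two-orbit property does provide) generates a $g$-invariant bi-infinite alternating \emph{walk}, which may revisit vertices; in that event the orbits $|(\alpha g^n)G_\alpha|$ are bounded and Proposition~\ref{prop:scale} yields $s(g)=1$, so the argument would have to start over with a different element. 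Unimodularity considerations cannot rescue this case either, since the arc labels of Section~\ref{sec:modular} are trivial when the local suborbits pair up symmetrically. Both decisive steps of the proposal are therefore open, and the proof as written does not go through.
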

  
If a totally disconnected, locally compact group $G$ has a compact open normal subgroup then $G$ is uniscalar.  Bhattacharjee and Macpherson \cite[Section 3]{BhattacharjeeMacpherson2003} (following up on work by Kepert and Willis, \cite{KepertWillis2001}), constructed an example of a compactly generated, totally disconnected, locally compact group that has no compact, open, normal subgroup, but every element normalizes some compact open subgroup.  

\begin{corollary}{\rm{(\cite[Corollary 6.3]{ArnadottirLederleMoller2020})}}\label{cor:uniscalar implies nearly discrete}
Let $G$ be a compactly generated, totally disconnected, locally compact group having a 3-regular Cayley--Abels graph.
If every $g \in G$ normalizes a compact open subgroup of $G$ then $G$ has a compact, open, normal subgroup.
\end{corollary}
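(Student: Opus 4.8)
The plan is to derive this as an immediate consequence of Theorem~\ref{thm:not-uniscalar}, after dispatching the low-degree cases with Theorem~A and Theorem~\ref{T2ends}. Note first that the hypothesis ``every $g \in G$ normalizes a compact open subgroup'' is, by the definition recalled just before Theorem~\ref{thm:not-uniscalar}, exactly the statement that $G$ is uniscalar; and the desired conclusion, that $G$ has a compact, open, normal subgroup, is exactly that $G$ is nearly discrete. The existence of a $3$-regular Cayley--Abels graph means $\md(G) \leq 3$, and since $\md(G) \neq 1$ by Theorem~A, the only possibilities are $\md(G) \in \{0, 2, 3\}$.

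Next I would clear away the cases $\md(G) \in \{0, 2\}$, which in fact need no use of the uniscalar hypothesis. If $\md(G) = 0$ then $G$ is compact by Theorem~A, so $G$ is its own compact, open, normal subgroup and is nearly discrete. If $\md(G) = 2$ then Theorem~\ref{T2ends} produces a continuous surjection from $G$ onto $\ZZ$ or $D_\infty$ with compact, open kernel; this kernel is a compact, open, normal subgroup, so again $G$ is nearly discrete.

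It remains to treat $\md(G) = 3$, and here I would argue by contraposition using Theorem~\ref{thm:not-uniscalar}. Suppose, for contradiction, that $G$ is \emph{not} nearly discrete. Then $G$ meets all the hypotheses of Theorem~\ref{thm:not-uniscalar} --- it is compactly generated, totally disconnected and locally compact, it is not nearly discrete, and $\md(G) = 3$ --- and the theorem concludes that $G$ is not uniscalar. This contradicts the assumption that every element of $G$ normalizes a compact open subgroup. Hence $G$ is nearly discrete, completing the case analysis.

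I do not anticipate a genuine obstacle: the mathematical content lives entirely in Theorem~\ref{thm:not-uniscalar}, and the corollary merely repackages it so that the explicit degree-$3$ assumption is replaced by the weaker ``has a $3$-regular Cayley--Abels graph.'' The only point that needs care is the logical bookkeeping at the boundary, namely confirming that the $3$-regular hypothesis forces $\md(G) \in \{0, 2, 3\}$ and that the two excluded small values are handled unconditionally --- which is precisely the role of the appeal to $\md(G) \neq 1$ and to the classifications in Theorem~A and Theorem~\ref{T2ends}.
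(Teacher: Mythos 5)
Your proof is correct and matches the intended derivation: the paper states this corollary without its own proof (citing the companion paper), and it is exactly the contrapositive of Theorem~\ref{thm:not-uniscalar} applied in the case $\md(G)=3$, with the remaining possibilities $\md(G)\in\{0,2\}$ dispatched unconditionally via Theorem~A and Theorem~\ref{T2ends}, just as you do. Your logical bookkeeping (translating the hypothesis into uniscalarity, the conclusion into near discreteness, and noting $\md(G)\neq 1$) is all that the deduction requires.
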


\section{Cayley--Abels graphs and the modular function}

Let $G$ be a locally compact group and $\mu$ a right-invariant Haar-measure on $G$. The \emph{modular function} is the map $\Delta \colon G \to \mathbb{R}^+$ such that for every measurable subset $A \subseteq G$ and for every $g \in G$ we have $\mu(gA)=\Delta(g)\mu(A)$; it is well-known that it exists and is a homomorphism.   A group is {\em unimodular} if $\Delta(g)=1$ for all $g\in G$.  

\subsection{Reading the modular function off the edges}\label{sec:modular}

The following result linking the modular function and sizes of suborbits was proved by Schlichting \cite[Lemma~1]{Schlichting1979}, see also 
\cite[Theorem~1]{Trofimov1985}.   

\begin{lemma}\label{LSuborbits}
Let $G$ be a totally disconnected, locally compact group and $\Delta$ the modular function on $G$. If $U$ is a compact open subgroup of $G$ and $g\in G$ then
$$\Delta(g)=\frac{|U:U\cap g^{-1}Ug|}{|g^{-1}Ug:U\cap g^{-1}Ug|}.$$
Suppose $G$ acts on a set $\Omega$ with compact, open point stabilizers.  If $\alpha\in\Omega$ and $g\in G$ then 
\begin{equation}\label{eq:modular}\Delta(g)=\frac{|G_\alpha:G_\alpha\cap g^{-1}G_\alpha g|}{|g^{-1}G_\alpha g:G_\alpha\cap g^{-1}G_\alpha g|}
=\frac{|(\alpha g) G_\alpha|}{|\alpha G_{ \alpha g}|}.
\end{equation}

Furthermore, the image of the modular function is a subgroup of the multiplicative group of positive rational numbers. 
\end{lemma}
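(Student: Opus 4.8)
The plan is to prove the three assertions in turn, deriving the orbit statement and the rationality statement from the index formula. Throughout I fix a right-invariant Haar measure $\mu$ on $G$, so that $\mu(Ag)=\mu(A)$ for every measurable $A$ and every $g\in G$, while $\mu(gA)=\Delta(g)\mu(A)$ by definition of $\Delta$.

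First I would establish the index formula for an arbitrary compact open subgroup $U$. The key observation is that $\Delta$ records how conjugation rescales the measure of $U$: combining a left translation with right invariance gives
\[
\mu(g^{-1}Ug)=\Delta(g^{-1})\,\mu(Ug)=\Delta(g)^{-1}\,\mu(U).
\]
Now I set $V=U\cap g^{-1}Ug$. Since $U$ and $g^{-1}Ug$ are compact open subgroups, $V$ is a compact open subgroup of finite index in each, and decomposing each group into finitely many right cosets of $V$ yields $\mu(U)=|U:V|\,\mu(V)$ and $\mu(g^{-1}Ug)=|g^{-1}Ug:V|\,\mu(V)$ by right invariance. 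Dividing these two identities and comparing with the displayed formula for $\Delta(g)^{-1}$ gives
\[
\Delta(g)=\frac{|U:U\cap g^{-1}Ug|}{|g^{-1}Ug:U\cap g^{-1}Ug|},
\]
which is the first assertion.

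For the action statement I would apply this formula with $U=G_\alpha$, which is compact and open by hypothesis; this delivers the first equality in \eqref{eq:modular} immediately. The second equality is pure orbit--stabilizer bookkeeping. The stabilizer of the point $\alpha g$ is the conjugate $G_{\alpha g}=g^{-1}G_\alpha g$, so the stabilizer of $\alpha g$ inside $G_\alpha$ is exactly $G_\alpha\cap g^{-1}G_\alpha g$, whence $|(\alpha g)G_\alpha|=|G_\alpha:G_\alpha\cap g^{-1}G_\alpha g|$; symmetrically, computing the orbit of $\alpha$ under $G_{\alpha g}$ gives $|\alpha G_{\alpha g}|=|g^{-1}G_\alpha g:G_\alpha\cap g^{-1}G_\alpha g|$. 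Taking the quotient reproduces the right-hand side of \eqref{eq:modular}.

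Finally, for the image: $\Delta$ is a homomorphism into $\mathbb{R}^+$, so its image is automatically a subgroup of $\mathbb{R}^+$. By van Dantzig's theorem $G$ contains a compact open subgroup $U$, so the first assertion exhibits every value $\Delta(g)$ as a ratio of two positive integers; hence the image is contained in $\mathbb{Q}^+$ and is therefore a subgroup of $\mathbb{Q}^+$. I expect the only delicate point to be keeping the conventions consistent, in particular the direction of the modular function relative to the choice of a right- rather than left-invariant Haar measure: an inversion error there (interchanging $\Delta$ and $\Delta^{-1}$) would flip the two indices in the final formula. Once that is pinned down, the coset decomposition and the orbit--stabilizer steps are entirely routine.
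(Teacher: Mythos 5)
Your proof is correct, and the one delicate point you flagged — the direction of the convention — works out exactly as you computed: with the paper's definition ($\mu$ right-invariant, $\mu(gA)=\Delta(g)\mu(A)$) one indeed gets $\mu(g^{-1}Ug)=\Delta(g)^{-1}\mu(U)$, and the coset decomposition and orbit--stabilizer steps then give the stated formulas, with the indices finite because $U\cap g^{-1}Ug$ is open in the compact groups $U$ and $g^{-1}Ug$. The paper itself gives no proof, citing Schlichting and Trofimov instead; your argument is the standard Haar-measure computation behind those references, so there is nothing to reconcile.
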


If $G$ acts on a set $\Omega$ with compact point stabilizers and if for $g, h\in G$ there is an $\alpha\in \Omega$ with $\alpha g=\alpha h$, then $\Delta(g)=\Delta(h)$.   If in addition $G$ is unimodular and point stabilizers are open, then $|(\alpha g) G_\alpha|=|\alpha G_{\alpha g}|$ for all $g$ in $G$ and all $\alpha\in\Omega$.

The following idea originates from the paper by Bass and Kulkarni \cite[Section 3]{BassKulkarni1990} and enables us to \lq\lq read\rq\rq\ the values of the modular function for a compactly generated, totally disconnected, locally compact group $G$ off a Cayley--Abels graph $\Gamma$.      Label an arc $(\alpha,\beta)$ in $\Gamma$
with the number
$$\Delta_{(\alpha, \beta)}=\frac{|\beta G_\alpha|}{|\alpha G_\beta
|}.$$
Note that $\Delta_{(\beta, \alpha)}=\Delta_{(\alpha, \beta)}^{-1}$.
This arc-labelling is clearly invariant under the action of $G$ on the set of arcs.  Combining these observations, we see that if $\Delta_{(\alpha, \beta)}\neq 1$ there can not exists $g\in G$ such that $(\alpha,\beta) g=(\beta,\alpha)$. 
If $g \in G$ satisfies $\alpha g=\beta$ then Lemma \ref{LSuborbits} readily implies that
\begin{equation} \label{Eq:Deltag}
   \Delta(g)= \frac{|\beta G_\alpha|}{|\alpha G_\beta|}=\Delta_{(\alpha, \beta)}.
\end{equation}
Suppose now that $\gamma$ is a neighbour of $\beta$ and $h \in G$ satisfies  $\alpha h=\gamma$. 
Set $g'= g^{-1}h \in G$.  Then $\beta g'=(\alpha g)g^{-1}h=\gamma$.
Hence $\alpha h=\alpha g g'$ and we see that
$$\Delta(h)=\Delta(gg')=\Delta(g)\Delta(g')=\Delta_{(\alpha,\beta)}\Delta_{(\beta,\gamma)}.$$
Inductively, we get that if $\alpha,\beta \in \Gamma$ are arbitrary vertices, $g \in G$ satisfies $\alpha g=\beta$ and $(\alpha_0,\dots,\alpha_s)$ is an $s$-arc with $\alpha_0=\alpha$ and $\alpha_s=\beta$, then
$$\Delta(g)=\Delta_{(\alpha_0,\alpha_1)}\cdot\ldots\cdot \Delta_{(\alpha_{s-1},\alpha_s)}.$$
Thus the labelled Cayley--Abels graph completely describes the modular function on
$G$. 
We have now proved the following:

\begin{theorem}\label{TModulargraph}
Let $G$ be a compactly generated, totally disconnected, locally compact group, $\Delta$ the modular function on $G$ and $\Gamma$ a Cayley--Abels graph for $G$.
If $g \in G$ and $(\alpha_0,\dots,\alpha_s)$ is a $s$-arc in $\Gamma$ such that $\alpha_0g = \alpha_s$,
then $\Delta(g) = \Delta_{(\alpha_0,\alpha_1)} \dots \Delta_{(\alpha_{s-1},\alpha_s)}$.  In particular, the image of $\Delta$ is generated by the labels of the arcs.
\end{theorem}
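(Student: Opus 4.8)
The plan is to reduce the whole statement to the single-edge identity \eqref{Eq:Deltag} together with the fact that $\Delta$ is a group homomorphism, and to run an induction on the length $s$ of the arc. The base case $s=1$ is immediate: if $(\alpha_0,\alpha_1)$ is an arc and $g\in G$ satisfies $\alpha_0 g=\alpha_1$, then \eqref{Eq:Deltag} gives $\Delta(g)=\Delta_{(\alpha_0,\alpha_1)}$ at once.

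For the general case I would first build a convenient element realizing the arc. Using vertex-transitivity, choose for each $0\le i\le s-1$ an element $g_i\in G$ with $\alpha_i g_i=\alpha_{i+1}$; since $(\alpha_i,\alpha_{i+1})$ is an arc, \eqref{Eq:Deltag} yields $\Delta(g_i)=\Delta_{(\alpha_i,\alpha_{i+1})}$. Because the action is on the right, the product $h=g_0g_1\cdots g_{s-1}$ satisfies $\alpha_0 h=\alpha_s$, and the homomorphism property gives
$$\Delta(h)=\prod_{i=0}^{s-1}\Delta(g_i)=\prod_{i=0}^{s-1}\Delta_{(\alpha_i,\alpha_{i+1})}.$$

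The only genuine point to get right is that the prescribed element $g$ need not coincide with $h$; one knows only that $\alpha_0 g=\alpha_s=\alpha_0 h$. Here I would invoke the observation recorded just after Lemma~\ref{LSuborbits}: since $g$ and $h$ send $\alpha_0$ to the same vertex, $\Delta(g)=\Delta(h)$. Conceptually this is where compactness of the point stabilizers enters, as $gh^{-1}\in G_{\alpha_0}$ lies in a compact subgroup and $\Delta$ must be trivial there, so that the value of $\Delta(g)$ depends only on the image $\alpha_0 g$. Combining this with the displayed identity completes the proof of the main formula; I expect no real obstacle beyond keeping the composition order of the $g_i$ consistent with the right action.

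For the concluding assertion, I would fix a base vertex $\alpha_0$. Given an arbitrary $g\in G$, put $\alpha_s=\alpha_0 g$; connectivity of $\Gamma$ provides an $s$-arc from $\alpha_0$ to $\alpha_s$, so the main formula writes $\Delta(g)$ as a product of arc labels, placing every value of $\Delta$ inside the subgroup they generate. Conversely each arc label equals some $\Delta(g_i)$ and hence lies in the image of $\Delta$. The two inclusions give that the image of $\Delta$ is precisely the subgroup of $\RR^+$ generated by the labels of the arcs.
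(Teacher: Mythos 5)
Your proposal is correct and takes essentially the same route as the paper: both rest on the single-arc identity \eqref{Eq:Deltag} and the multiplicativity of $\Delta$, the only cosmetic difference being that you compose edge-elements $g_0g_1\cdots g_{s-1}$ and then compare with the given $g$ via $gh^{-1}\in G_{\alpha_0}$, whereas the paper factors an arbitrary element step by step along the arc ($h=gg'$ with $g'=g^{-1}h$), with the same stabilizer observation recorded after Lemma~\ref{LSuborbits} underlying both. Your handling of the final assertion, including the reverse inclusion via vertex-transitivity, is likewise what the paper intends.
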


\begin{corollary} \label{cor:generators of ImDelta}
Let $G$ be a \cgtdlc group, $\Delta$ the modular function on $G$ and $\Gamma$ a Cayley--Abels graph for $G$.
For a vertex  $\alpha$ in $\Gamma$,
let $B_1,\dots,B_n$ be the orbits of $G_\alpha$ on the neighbours of $\alpha$ and choose, for each $i$, an element $g_i \in G$ with $\alpha g_i \in B_i$.
Then
\[
\im(\Delta) = \left\langle \frac{|(\alpha g_1) G_\alpha|}{|(\alpha g_1^{-1}) G_\alpha|}, \dots, \frac{|(\alpha g_n) G_\alpha|}{|(\alpha g_n^{-1}) G_\alpha|} \right\rangle \leq \mathbb{Q}^+.
\]
The image of $\Delta$ is a finitely generated, free abelian subgroup of the multiplicative subgroup of the positive rational numbers.
\end{corollary}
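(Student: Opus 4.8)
The plan is to deduce everything from Theorem~\ref{TModulargraph}, which already tells us that $\im(\Delta)$ is generated by the labels $\Delta_{(\gamma,\delta)}$ of all arcs of $\Gamma$. The first step is to cut this a priori infinite generating set down to finitely many labels at the single vertex $\alpha$. Since $G$ acts vertex-transitively, any arc $(\gamma,\delta)$ can be moved by some $g \in G$ with $\gamma g = \alpha$ to the arc $(\alpha, \delta g)$ with $\delta g \in \Gamma(\alpha)$; because the arc-labelling is $G$-invariant, $\Delta_{(\gamma,\delta)} = \Delta_{(\alpha, \delta g)}$. Hence $\im(\Delta)$ is generated by $\{\Delta_{(\alpha,\beta)} : \beta \in \Gamma(\alpha)\}$. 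Next I would observe that this label is constant on $G_\alpha$-orbits of neighbours: if $\beta' = \beta h$ with $h \in G_\alpha$, then $(\alpha,\beta)h = (\alpha,\beta')$, so $G$-invariance gives $\Delta_{(\alpha,\beta)} = \Delta_{(\alpha,\beta')}$. Thus there are at most $n$ distinct labels, one per orbit $B_i$, and $\im(\Delta)$ is generated by these values.

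The second step is to identify the label attached to $B_i$ with the stated quotient. Picking $g_i$ with $\alpha g_i \in B_i$, equation~\eqref{Eq:Deltag} gives $\Delta(g_i) = \Delta_{(\alpha, \alpha g_i)}$, so it suffices to rewrite $\Delta(g_i)$. By Lemma~\ref{LSuborbits}, $\Delta(g_i) = |(\alpha g_i) G_\alpha| \,/\, |\alpha G_{\alpha g_i}|$, so the one computation that needs doing is the orbit-size identity $|\alpha G_{\alpha g}| = |(\alpha g^{-1}) G_\alpha|$. This is the only genuinely non-formal point, and I expect it to be the main (though still elementary) obstacle. It follows from $G_{\alpha g} = g^{-1} G_\alpha g$: the orbit $\alpha G_{\alpha g} = (\alpha g^{-1}) G_\alpha g$ is the image of $(\alpha g^{-1}) G_\alpha$ under the bijection $v \mapsto vg$, and so has the same cardinality. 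Substituting yields $\Delta(g_i) = |(\alpha g_i) G_\alpha| \,/\, |(\alpha g_i^{-1}) G_\alpha|$, which is exactly the $i$-th generator in the statement; together with the first step this gives the displayed equality $\im(\Delta) = \langle \delta_1,\dots,\delta_n\rangle$.

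Finally, the structural claim is immediate. The generating set just produced is finite because $\Gamma$ is locally finite, so $n \leq \deg(\alpha) < \infty$, whence $\im(\Delta)$ is finitely generated. Since Lemma~\ref{LSuborbits} already places $\im(\Delta)$ inside $\mathbb{Q}^+$, and $(\mathbb{Q}^+, \cdot)$ is free abelian on the set of primes, $\im(\Delta)$ is a subgroup of a free abelian group and is therefore itself free abelian; combined with finite generation, this completes the argument.
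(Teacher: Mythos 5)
Your proof is correct, but it reaches the generation statement by a genuinely different route than the paper. The paper's own proof is more direct: since $\{\alpha g_1,\dots,\alpha g_n\}$ meets every $G_\alpha$-orbit on $\Gamma(\alpha)$, the set $G_\alpha\cup\{g_1,\dots,g_n\}$ generates $G$ (a fact recorded in the preliminaries on Cayley--Abels graphs), Lemma~\ref{LSuborbits} puts $G_\alpha$ inside $\ker(\Delta)$, and since $\Delta$ is a homomorphism into an abelian group this immediately gives $\im(\Delta)=\langle\Delta(g_1),\dots,\Delta(g_n)\rangle$. You instead invoke Theorem~\ref{TModulargraph} to generate $\im(\Delta)$ by the labels of \emph{all} arcs, and then collapse that infinite generating set in two reduction steps, using vertex-transitivity together with $G$-invariance of the labelling, and then constancy of the label on each $G_\alpha$-orbit $B_i$; this is valid and makes the combinatorics explicit, but in effect you re-derive at the level of labels what the paper gets in one line from $G=\langle G_\alpha,g_1,\dots,g_n\rangle$. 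From that point on the two arguments coincide: your orbit-size identity $|\alpha G_{\alpha g}|=|\alpha(g^{-1}G_\alpha g)|=|(\alpha g^{-1})G_\alpha|$, proved via the bijection $v\mapsto vg$, is exactly the paper's computation, and combining it with (\ref{Eq:Deltag}) yields the stated fractions. For the final structural claim the approaches differ again in a minor way: the paper cites that finitely generated subgroups of $\QQ^+$ are free abelian (finitely generated torsion-free abelian groups are free), while you use that $(\QQ^+,\cdot)$ is free abelian on the primes and that subgroups of free abelian groups are free abelian; both are sound, and your version has the small advantage of giving freeness of $\im(\Delta)$ independently of finite generation.
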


\begin{proof}
Recall that $G = \langle G_\alpha, g_1, \dots g_n \rangle$.  Lemma~\ref{LSuborbits} implies that $G_\alpha$ is contained in the kernel of the modular function.
Thus $\im(\Delta)$ is generated by $\Delta(g_1),\dots,\Delta(g_n)$.
Note that
$|\alpha G_{\alpha g_i}|= |\alpha (g_i^{-1} G_\alpha g_i)| =|(\alpha g_i^{-1}) G_\alpha|$.
So (\ref{Eq:Deltag}) gives us $\Delta(g_i) = \frac{|\alpha g_i G_\alpha|}{|\alpha g_i^{-1} G_\alpha|}$.   The last statement follows since finitely generated subgroups of the multiplicative subgroup of positive rational numbers are free abelian.
\end{proof}

\subsection{Minimal degree and the modular function}

The observations in the last section relating the modular function and Cayley--Abels graphs allow us to get a lower bound on the minimal degree of a Cayley--Abels graph in terms of the values of the modular function.

\begin{theorem}\label{C-Adigraphs}
  Let $G$ be a non-compact, compactly generated, totally disconnected, locally compact group. Let $H \leq \QQ^+$ be the image of the modular function on $G$.
  Then every Cayley--Abels graph of $G$ has degree at least \(\min(A)\), where 
  \[A = \Bigg\{\sum_{i=1}^k(p_i+q_i) \ \Bigg| \ p_1,\dots,p_k,q_1,\dots,q_k \in \NN^*, \,
  \left \langle \frac{p_1}{q_1}, \dots, \frac{p_k}{q_k} \right \rangle=H\Bigg\}\subseteq \NN^*.\]
In particular, if $H$ is cyclic and generated by the rational number \(p/q\), where \(p,q\in \NN^*\) are relatively prime, then every Cayley--Abels graph of \(G\) has degree at least \(p+q\).
\end{theorem}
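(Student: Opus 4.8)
The plan is to fix an arbitrary Cayley--Abels graph $\Gamma$ of degree $d$ together with a vertex $\alpha$, and to exploit the pairing of neighbourhood orbits induced by $g \mapsto g^{-1}$. Let $B_1, \dots, B_n$ be the orbits of $G_\alpha$ on $\Gamma(\alpha)$ and choose $g_i \in G$ with $\alpha g_i \in B_i$, so that $d = \sum_{i=1}^n |B_i|$ and $|B_i| = |(\alpha g_i)G_\alpha|$. Since $\{\alpha, \alpha g_i\}$ is an edge, applying the automorphism $g_i^{-1}$ shows that $\alpha g_i^{-1}$ is again a neighbour of $\alpha$, hence lies in some orbit $B_{\iota(i)}$. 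First I would verify that $\iota$ is a well-defined involution on $\{1, \dots, n\}$: independence of the choice of $g_i$ holds because replacing $g_i$ by another element sending $\alpha$ into $B_i$ only multiplies $\alpha g_i^{-1}$ on the right by an element of $G_\alpha$, and the choice $g_{\iota(i)} = g_i^{-1}$ gives $\iota^2 = \id$.

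Next I would feed this pairing into the modular function. The identity $\Delta(g_i) = |(\alpha g_i)G_\alpha| / |(\alpha g_i^{-1})G_\alpha|$ from Corollary \ref{cor:generators of ImDelta} becomes $\Delta(g_i) = |B_i|/|B_{\iota(i)}|$; in particular every orbit fixed by $\iota$ satisfies $\Delta(g_i) = 1$. By Corollary \ref{cor:generators of ImDelta} the group $H$ is generated by $\Delta(g_1), \dots, \Delta(g_n)$, the fixed orbits contribute the trivial element, and the two ratios attached to a genuine $2$-cycle $\{i, \iota(i)\}$ are mutually inverse. Thus, choosing one index $i$ from each $2$-cycle to form a set $P$ and setting $p_i = |B_i|$, $q_i = |B_{\iota(i)}|$, I obtain $H = \langle p_i/q_i : i \in P\rangle$. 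If $H$ is nontrivial then $P \neq \emptyset$, the quantity $\sum_{i \in P}(p_i + q_i)$ lies in $A$, and since the fixed orbits contribute non-negatively, $d = \sum_{i=1}^n |B_i| \geq \sum_{i \in P}(p_i + q_i) \geq \min(A)$. When $H$ is trivial the claim is merely $d \geq 2$, which holds because the vertex set of $\Gamma$ is infinite.

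For the final assertion I would compute $\min(A)$ when $H = \langle p/q\rangle$ with $\gcd(p,q) = 1$. The single ratio $p/q$ shows $p + q \in A$, so $\min(A) \leq p + q$. Conversely, any generating tuple of $H \cong \ZZ$ writes each of its entries as $(p/q)^{m_i}$ with $\gcd(m_1, \dots, m_k) = 1$, so some $m_i \neq 0$; the lowest-terms representation of a rational minimises numerator-plus-denominator, whence $p_i + q_i \geq p^{|m_i|} + q^{|m_i|} \geq p + q$, using $x^n \geq x$ for $x \in \NN^*$ and $n \geq 1$. Hence $\sum(p_i + q_i) \geq p + q$, giving $\min(A) = p + q$ and the claimed bound $d \geq p+q$.

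The main obstacle is the bookkeeping in the first two paragraphs: recognising that $g \mapsto g^{-1}$ organises the neighbourhood orbits into an involution whose $2$-cycles carry exactly the data $(p_i, q_i)$ recorded by the modular function, and checking that one representative per $2$-cycle (together with the $\Delta = 1$ fixed orbits) generates $H$. Once this structure is in place the inequality $d \geq \min(A)$ falls out immediately, and the concluding number-theoretic evaluation of $\min(A)$ in the cyclic case is routine.
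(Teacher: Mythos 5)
Your proof is correct and takes essentially the same route as the paper's: your involution $\iota$ on the $G_\alpha$-orbits of $\Gamma(\alpha)$ induced by $g \mapsto g^{-1}$ is precisely the paper's decomposition into edge orbits $E_i$, each contributing the disjoint orbits $\beta_i G_\alpha$ and $(\alpha g_i^{-1})G_\alpha$ of sizes $p_i$ and $q_i$ to the degree, with both arguments concluding via Corollary \ref{cor:generators of ImDelta}. You additionally make explicit two points the paper glosses over — the unimodular case, where non-compactness forces $d \geq 2$, and the verification that $\min(A)=p+q$ in the cyclic case — and both of these fill-ins are correct.
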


\begin{proof} 
Let $\Gamma$ be a Cayley--Abels graph for $G$ and let $\alpha \in \V\Gamma$.
 Think of $\Gamma$ as a labelled digraph as in Section \ref{sec:modular}. The group \(G\)  has finitely many orbits \(E_1,\dots, E_n\) on its edges. By vertex-transitivity, we can take an edge $e_i=\{\alpha, \beta_i\}\in E_i$ and set $p_i=|\beta_i G_{\alpha}|$ and $q_i=|\alpha G_{\beta_i}|$. The label of the arc $(\alpha, \beta_i)$ is $p_i/q_i$.  These numbers are independent of the choice of the representative $e_i$ of $E_i$.
 Note that if $g_i\in G$ is such that $\alpha g_i=\beta_i$ then $|\alpha G_{\beta_i}|=|(\alpha g_i^{-1})G_{\alpha}|$.  If $p_i\neq q_i$ then the sets $\beta_i G_{\alpha}$ and $(\alpha g_i^{-1}) G_{\alpha}$ are disjoint and are both contained in $\Gamma(\alpha)$.  Thus the contribution of $E_i$ to the degree of $\alpha$ is at least $p_i+q_i$. 
 Hence the degree of $\Gamma$ is at least equal to the sum of all $p_i+q_i$ with $p_i\neq q_i$.  Corollary \ref{cor:generators of ImDelta} now says 
  \[H=\left\langle \frac{p_1}{q_1},\dots, \frac{p_n}{q_n}\right\rangle\]
and the result follows.
\end{proof}

The lower bound provided in Theorem \ref{C-Adigraphs} is far away from  being sharp in general:  For a unimodular group the bound provided is $2$ but the minimal degree of a unimodular group can be arbitrarily large.

Theorem \ref{C-Adigraphs} does not say anything meaningful about the minimal degrees of unimodular groups, such as the automorphism groups of regular or bi-regular trees.  (A tree is said to be {\em bi-regular}, or more precisely $(d,d')$-{\em bi-regular}, if all the vertices in one part of the natural bipartition have degree $d$ and all the vertices in the other part have degree $d'$.)  But Theorem \ref{C-Adigraphs} can be applied to the stabilizer of an end in a regular or bi-regular tree.
Ends of graphs were defined in Section~\ref{sec:degree2or3}, but in the special case of trees the definition has a simpler form:
Two rays $(\alpha_0,\alpha_1,\dots)$ and $(\beta_0,\beta_1,\dots)$ in a tree $T$ are said to be equivalent if  and only if they have the same infinite tail, i.e. there exist $k,l \geq 0$ such that $(\alpha_k,\alpha_{k+1},\alpha_{k+2},\dots)=(\beta_l,\beta_{l+1},\beta_{l+2},\dots)$. An \emph{end} of a tree $T$ is an equivalence class of a rays.   Given an end $\omega$ of a tree, it is easy to show that for each vertex $\alpha$ in $T$ there is a unique ray $R_\alpha$ in $\omega$ that has $\alpha$ as its initial vertex.
It is obvious that $\Aut(T)$ acts on the set of ends of $T$.  If $\omega$ is an end of $T$, then the stabilizer of the end $\omega$, the group $\aut(T)_\omega$, is a closed subgroup of $\aut(T)$.  Hence if $T$ is locally finite, the stabilizer of an end is a totally disconnected, locally compact group.

\begin{corollary}\label{cor:end stabilizer in tree}\(\)
\begin{enumerate}
    \item Let $T_d$ denote the $d$-regular tree and let  $\omega$ be an end of $T_d$.  Set $G=\aut(T)_\omega$.  Then  $\md(G)=d$.
\item Let $T_{d,d'}$ be a bi-regular tree for some distinct integers $d, d'\geq 2$.   Suppose $\omega$ is an end of $T$ and set $G=\aut(T)_\omega$.   Then $\md(G_\omega)= (d-1)(d'-1)+1$. 
    \item Let $\aut^+(T_d)$ denote the subgroup of $\aut(T_d)$ that leaves each part of the natural bipartition of $\V T_d$ invariant.  Suppose $\omega$ is an end of $T_d$ and set $G=\aut^+(T)_\omega$.  Then $m(G)=(d-1)^2+1$.
\end{enumerate}
\end{corollary}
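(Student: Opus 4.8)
The plan is to prove all three parts by sandwiching $\md(G)$ between an upper bound coming from an explicit Cayley--Abels graph and the lower bound of Theorem~\ref{C-Adigraphs} applied to the modular function. Throughout, orient $T$ towards $\omega$: every vertex $\alpha$ has a unique neighbour on the ray $R_\alpha$ (its \emph{parent}, towards $\omega$) and the remaining neighbours are its \emph{children}. Since a vertex stabilizer $G_\alpha$ fixes $\alpha$ and preserves $\omega$, it fixes $R_\alpha$ pointwise, hence fixes the parent; and as all subtrees hanging below a vertex of a given type are isomorphic, $G_\alpha$ acts transitively on the set of children of each prescribed type. I will also use that $G_\alpha$ is compact open in $G$ (a closed subgroup of the compact group $\aut(T)_\alpha$), and that $G$ is non-compact (it contains hyperbolic translations towards $\omega$) and compactly generated (witnessed by the graphs constructed below), so that Theorem~\ref{C-Adigraphs} applies.

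For part (1), first note that because $T_d$ is regular there is a hyperbolic automorphism fixing $\omega$ that translates an axis towards $\omega$ by a single step; combining such translations with vertex stabilizers one checks in the standard way that $G=\aut(T_d)_\omega$ is vertex-transitive on $\V T_d$. Hence $T_d$ itself is a Cayley--Abels graph for $G$, giving $\md(G)\le d$. The orbits of $G_\alpha$ on the $d$ neighbours of $\alpha$ are the singleton $\{$parent$\}$ and the set of $d-1$ children, so Corollary~\ref{cor:generators of ImDelta} gives $\im(\Delta)=\langle (d-1)/1\rangle=\langle d-1\rangle$. Since $d-1$ and $1$ are coprime, Theorem~\ref{C-Adigraphs} yields $\md(G)\ge (d-1)+1=d$, and equality follows.

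For parts (2) and (3), the decisive difference is that $G$ now preserves the bipartition of the tree (by definition in part (3), and because distinct degrees force it in part (2)), so $G$ has two vertex orbits and $T$ is not vertex-transitive. I will build a Cayley--Abels graph $\Gamma'$ on a single orbit: take its vertices to be the degree-$d$ vertices and join $\alpha$ to its (unique) degree-$d$ \emph{grandparent} two steps towards $\omega$ and to its $(d-1)(d'-1)$ degree-$d$ \emph{grandchildren} two steps away from $\omega$ (with $d'=d$ in part (3)). This graph has degree $(d-1)(d'-1)+1$; it is $G$-vertex-transitive because $G$ is transitive on the degree-$d$ orbit and preserves the ``ancestor at distance $2$'' relation; and it is connected because the chains of grandparent-steps from any two degree-$d$ vertices $\alpha,\beta$ eventually merge at a common degree-$d$ vertex near the confluence of $R_\alpha$ and $R_\beta$ (either the confluence itself, or, when the confluence is a degree-$d'$ vertex, the common grandparent of the two degree-$d$ vertices flanking it). Thus $\md(G)\le (d-1)(d'-1)+1$. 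For the matching lower bound, compute $\im(\Delta)$ on $\Gamma'$: the orbits of $G_\alpha$ on the neighbours of $\alpha$ are the singleton $\{$grandparent$\}$ and the $(d-1)(d'-1)$ grandchildren (a single orbit, since $G_\alpha$ is transitive on children and each child-stabilizer is transitive on that child's children), so Corollary~\ref{cor:generators of ImDelta} gives $\im(\Delta)=\langle (d-1)(d'-1)\rangle$, a cyclic group generated by $(d-1)(d'-1)/1$ with coprime numerator and denominator. Theorem~\ref{C-Adigraphs} then gives $\md(G)\ge (d-1)(d'-1)+1$, and equality follows (taking $d'=d$ for part (3)).

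The two places that need genuine care are the transitivity and connectivity claims rather than the modular bookkeeping. Showing that the end stabilizer is transitive on a single type orbit is the standard Busemann-function-plus-confluence argument (level-changing hyperbolic elements adjust the height, and an elliptic element fixing the confluence matches up two vertices at equal height), and I expect the main obstacle to be the clean verification that the auxiliary graph $\Gamma'$ is connected in the case where the confluence of two degree-$d$ vertices is a degree-$d'$ vertex, where one must pass one further grandparent-step to reach a common ancestor. I stress that reading $\im(\Delta)$ off the correct graph is what makes the argument work: computing on $\Gamma'$ (rather than naively on $T$) immediately exhibits $\im(\Delta)$ as the \emph{cyclic} group $\langle(d-1)(d'-1)\rangle$, which is exactly what forces the lower bound $(d-1)(d'-1)+1$ instead of the much weaker bound one would get from the separate factors $d-1$ and $d'-1$.
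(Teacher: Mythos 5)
Your proposal is correct and follows essentially the same route as the paper: the same auxiliary graph on one bipartition class (edges joining each degree-$d$ vertex to its grandparent and grandchildren along the orientation towards $\omega$, which the paper describes as pairs at distance $2$ with $\beta\in R_\alpha$), with the matching lower bound obtained from Corollary~\ref{cor:generators of ImDelta} and Theorem~\ref{C-Adigraphs} exactly as in the paper. The only cosmetic difference is that you prove part (1) directly on $T_d$, whereas the paper deduces it from part (2) by setting $d'=2$; these amount to the same computation, since the paper's construction applied to $T_{d,2}$ (the barycentric subdivision of $T_d$) returns $T_d$ itself.
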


\begin{proof} 
Clearly (3) is a special case of (2) with $d=d'$ and (1) follows from (2) by setting $d'=2$. It is left to prove (2).

Denote with $V_1$ one of the classes of the natural bipartition of $T_{d,d'}$.  Construct a new graph $T$ with vertex set $V_1$ and edge set 
$\{\{\alpha,\beta\}\mid d_{T_d}(\alpha, \beta)=2\mbox{ and } \beta\in R_\alpha\}$.
It is easy to see that this new graph is isomorphic to the $((d-1)(d'-1)+1)$-regular tree.  The end $\omega$ of $T_{d,d'}$ corresponds to an end of $T$ and $G$ acts on $T$ fixing that end. Clearly $G$ acts vertex- and edge-transitively on $T$.  So, $T$ is a Cayley--Abels graph for $G$ and $\md(G)\leq (d-1)(d'-1)+1$.  

Let $\{\alpha, \beta\}$ be an edge in $T$ such that $\beta\in R_\alpha$.  Suppose that $g\in G$ and $\alpha g=\beta$.  Then
$$\Delta(g)=\frac{|\beta G_\alpha|}{|\alpha G_\beta|}=\frac{1}{(d-1)(d'-1)}$$
and by edge-transitivity this rational number generates the image of $\Delta$.  Proposition \ref{cor:generators of ImDelta} and Theorem \ref{C-Adigraphs}  say that  $\md(G)\geq (d-1)(d'-1)+1$. Combining this with the upper bound above we see that $\md(G)=(d-1)(d'-1)+1$.
\end{proof}

\begin{example}\label{expl:tree}
Set $H=\aut^+(T_d)$.  
The authors don't know the exact value of $\md(H)$.  But $H$ acts transitively on the edge set of $T_d$ and thus the line graph of $T_d$ is a Cayley--Abels graph for $H$.  (The {\em line graph} of a graph $\Gamma$ has the set of edges in $\Gamma$ as a vertex set and two vertices in the line graph, i.e.\ edges in $\Gamma$, are adjacent in the line graph if and only if they have a common end vertex.) The degree of the line graph of $T_d$ is $2d-2$ and hence $\md(H)\leq 2d-2$.  When $d\geq 3$ we see that $\md(H)$ grows at most linearly with $d$ and is strictly smaller than the minimal degree of the subgroup fixing an end, which grows quadratically with $d$.

Similarly, the line graph of $T_{d, d'}$ is a Cayley--Abels graph for $H=\aut(T_{d, d'})$.  The degree of the line graph is $d+d'-2$ and hence $\md(H)\leq d+d'-2$.  If  $d, d'\geq 3$ then $\md(H)$ is strictly smaller than the minimal degree of the subgroup fixing an end. 
\end{example}

\subsection{Digraphs and the modular function}

One of the first applications of the modular function to the study of graphs and their automorphism groups was by  Praeger in 1991, \cite{Praeger1991}, where she proves the following theorem using the modular function.   (She does not use that term {\em modular function} explicitly but defines a function in terms of sizes of suborbits as in Formula (\ref{eq:modular}) in Lemma~\ref{LSuborbits}.)

Let $\vZZ$ denote the digraph with vertex set $\ZZ$ and arc set $\{(i, i+1)\mid i\in \ZZ\}$. Similarly, we define $\vZZ^n$ as the digraph with vertex set $\ZZ^n$, in which $((x_1, \ldots, x_n), (y_1, \ldots, y_n))$ is an arc if and only if there is $1 
\leq j \leq n$ such that $y_j=x_j+1$ and $x_i=y_i$ for all $i\neq j$.  
A digraph is said to have \emph{Property Z} if a surjective digraph 
morphism $\Gamma\to \vZZ$ exists.

A \emph{fibre} of a map is the pre-image of a point.

\begin{theorem}  {\rm (\cite{Praeger1991})}\label{TPraeger}
Let $\Gamma$ be an infinite, connected, vertex- and arc-transitive digraph with finite but unequal in- and out-degrees.  Then there exists a surjective graph morphism $\varphi:\Gamma\rightarrow \vZZ$, i.e.~$\Gamma$ has Property Z.  The fibres of $\varphi$ are all infinite.
\end{theorem}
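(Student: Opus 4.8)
The plan is to place $\Gamma$ in the Cayley--Abels framework and to produce the morphism to $\vZZ$ directly from the modular function, whose nontriviality is forced by the unequal degrees. First I would set $G=\Aut(\Gamma)$ with the permutation topology. Since $\Gamma$ is connected and locally finite and $G$ is vertex-transitive, all suborbits are finite and $G$ is closed, so by Lemma~\ref{lem:compact-cocompact} the stabilizers $G_\alpha$ are compact open; hence the underlying undirected graph of $\Gamma$ is a Cayley--Abels graph for $G$, and $G$ is a non-compact, compactly generated, totally disconnected, locally compact group. Writing $d^+=|\out(\alpha)|$ and $d^-=|\inn(\alpha)|$, arc-transitivity makes $G_\alpha$ transitive on $\out(\alpha)$ and $G_\beta$ transitive on $\inn(\beta)$, so for every arc $(\alpha,\beta)\in\A\Gamma$ the label of Section~\ref{sec:modular} is $\Delta_{(\alpha,\beta)}=|\beta G_\alpha|/|\alpha G_\beta|=d^+/d^-$. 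Set $r=d^+/d^-$; the hypothesis $d^+\neq d^-$ gives $r\neq 1$, and by Theorem~\ref{TModulargraph} the image of $\Delta$ is generated by the arc labels, so $\im(\Delta)=\langle r\rangle\cong\ZZ$.

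Next I would build $\varphi$. Fixing a base vertex $\alpha_0$, for each vertex $\beta$ pick $g\in G$ with $\alpha_0 g=\beta$ and define $\varphi(\beta)\in\ZZ$ by $\Delta(g)=r^{\varphi(\beta)}$; this is well defined because $G_{\alpha_0}\subseteq\ker\Delta$ by Lemma~\ref{LSuborbits}, so $\Delta(g)$ depends only on $\beta$. To check that $\varphi\colon\Gamma\to\vZZ$ is a digraph morphism, take an arc $(\beta,\gamma)$ and an element $k$ with $\beta k=\gamma$; since $\Delta$ is a homomorphism one gets $\Delta(k)=r^{\varphi(\gamma)-\varphi(\beta)}$, while (\ref{Eq:Deltag}) gives $\Delta(k)=\Delta_{(\beta,\gamma)}=r$. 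As $r$ has infinite order this forces $\varphi(\gamma)=\varphi(\beta)+1$, i.e.\ $(\varphi(\beta),\varphi(\gamma))\in\A\vZZ$. Surjectivity is then immediate: every vertex has out-degree $d^+\geq 1$ and in-degree $d^-\geq 1$, so from $\alpha_0$ one follows arcs forwards and backwards to realize every integer as a value of $\varphi$.

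For the last assertion I would identify the fibres of $\varphi$ with the orbits of $G^+:=\ker\Delta$. Indeed $\varphi(\beta)=\varphi(\gamma)$ holds if and only if some (equivalently every) $g$ with $\beta g=\gamma$ satisfies $\Delta(g)=1$, i.e.\ $\gamma\in\beta G^+$; as $G^+$ is normal and $G$ is vertex-transitive, $G$ permutes these fibres transitively, so they all share a common cardinality $m$. It remains to exclude $m<\infty$. The crux is an arc count on consecutive fibres $F_n=\varphi^{-1}(n)$: every arc raises $\varphi$ by exactly $1$, so all out-neighbours of a vertex of $F_n$ lie in $F_{n+1}$ and all in-neighbours of a vertex of $F_{n+1}$ lie in $F_n$; double-counting the arcs between $F_n$ and $F_{n+1}$ therefore yields $|F_n|\,d^+=|F_{n+1}|\,d^-$. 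If $m$ were finite this would read $m\,d^+=m\,d^-$, forcing $d^+=d^-$ and contradicting the hypothesis; hence $m=\infty$ and every fibre is infinite.

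The main obstacle I anticipate is this final step: the construction of $\varphi$ is essentially bookkeeping with the modular function, but the infinitude of the fibres is where $d^+\neq d^-$ must genuinely be used. The point to get right is the identification of the fibres with the $\ker\Delta$-orbits, since (via normality of $\ker\Delta$ and vertex-transitivity) this is what makes all fibres equinumerous and thereby converts the double-count of arcs between adjacent fibres into $m\,d^+=m\,d^-$; one should also note that $\ker\Delta$ is open, as it contains the open subgroup $G_{\alpha_0}$, which keeps the orbit/level picture clean.
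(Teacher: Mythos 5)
Your proposal is correct. Note that the paper never proves Theorem~\ref{TPraeger} itself: it cites Praeger \cite{Praeger1991}, and even its proof of the generalization, Theorem~\ref{TPraeger-n}, constructs $\varphi$ exactly as you do (via $\Delta$ and Lemma~\ref{LSuborbits}/Theorem~\ref{TModulargraph}) but then falls back on Theorem~\ref{TPraeger} for the infinitude of the fibres. So your construction of the morphism coincides with the paper's method, while your last step genuinely adds something the paper outsources: identifying the fibres with the orbits of the open normal subgroup $\ker\Delta$, using normality plus vertex-transitivity to make all fibres equinumerous of cardinality $m$, and then double-counting the arcs between consecutive fibres to get $m\,d^+=m\,d^-$, which is impossible for finite $m$ since $d^+\neq d^-$. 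That argument is sound (arc-transitivity does give $\beta G_\alpha=\out(\alpha)$ and $\alpha G_\beta=\inn(\beta)$, so every arc label equals $r=d^+/d^-$, and $r\neq 1$ has infinite multiplicative order in $\QQ^+$, so $\varphi$ is a well-defined morphism raising the level by exactly $1$ along each arc), and it makes the statement self-contained where the paper relies on the citation. The only hypotheses you should flag explicitly are the small ones: $\V\Gamma$ is countable (connected plus locally finite), which Lemma~\ref{lem:compact-cocompact} requires, and $d^+,d^-\geq 1$, which follows since an infinite connected digraph has arcs and the degrees are constant by vertex-transitivity; neither causes any difficulty.
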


Note that, since $\vZZ$ is vertex-transitive with in- and out-degree $1$, a digraph morphism from a connected digraph to $\vZZ$ is uniquely determined by the image of one vertex and this image can be chosen arbitrarily.

Praeger's theorem can be generalized to digraphs with more than one orbit on arcs.   The proof resembles the proof of Praeger's theorem found in Evans' paper \cite[Theorem~3.2]{Evans1997}.

\begin{theorem}\label{TPraeger-n}
Let $\Gamma$ be a locally finite digraph.  Let $G\leq \autga$ be a closed subgroup acting vertex-transitively on $\Gamma$. Assume $G$ is not unimodular and denote by $\Delta$ the modular function on $G$.  Suppose that $G$ has $n$ orbits on the arcs of $\Gamma$ and that $\im(\Delta)$ is a free abelian group of rank $n$.  Then, there exists a surjective digraph morphism $\varphi: \Gamma\to \vZZ^n$ with infinite fibres.
\end{theorem}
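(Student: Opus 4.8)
The plan is to read the desired morphism directly off the modular function. Throughout, $G$ is a totally disconnected, locally compact group acting on $\V\Gamma$ with compact, open vertex stabilizers, so the modular function $\Delta$ and the arc-labelling of Section~\ref{sec:modular} are available. Fix a vertex $\alpha_0$. Since $G_{\alpha_0}$ is compact and $\QQ^+$ has no nontrivial compact subgroup, we have $\Delta(G_{\alpha_0})=\{1\}$, i.e.\ $G_{\alpha_0}\leq\ker(\Delta)$. I would then define $\varphi\colon \V\Gamma\to\im(\Delta)$ by $\varphi(\beta)=\Delta(g)$ for any $g\in G$ with $\alpha_0g=\beta$; this is well defined because two such choices of $g$ differ by an element of $G_{\alpha_0}\leq\ker(\Delta)$, and it is surjective by vertex-transitivity, since $\{\Delta(g)\mid g\in G\}=\im(\Delta)$.

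Next I would identify $\im(\Delta)$ with $\ZZ^n$ so that $\varphi$ becomes a morphism onto $\vZZ^n$. Let $\Delta_1,\dots,\Delta_n$ be the labels of representatives of the $n$ arc-orbits of $G$. By Theorem~\ref{TModulargraph} they generate $\im(\Delta)$, and since a free abelian group of rank $n$ generated by $n$ elements is freely generated by them, $\{\Delta_1,\dots,\Delta_n\}$ is a basis; I identify $\im(\Delta)\cong\ZZ^n$ by sending $\Delta_i$ to the standard basis vector $e_i$. The key computation is then that for an arc $(\alpha,\beta)$ in the $i$-th orbit, choosing $g,h$ with $\alpha_0g=\alpha$ and $\alpha_0h=\beta$, the element $g^{-1}h$ sends $\alpha$ to $\beta$, so by~(\ref{Eq:Deltag}) one gets $\varphi(\beta)\varphi(\alpha)^{-1}=\Delta(g^{-1}h)=\Delta_{(\alpha,\beta)}=\Delta_i$, which corresponds to $e_i$. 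In additive notation $\varphi(\beta)-\varphi(\alpha)=e_i$, which is exactly the condition that $(\varphi(\alpha),\varphi(\beta))$ be an arc of $\vZZ^n$. Hence $\varphi$ is a surjective digraph morphism onto $\vZZ^n$.

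The step I expect to be the main obstacle is showing that the fibres are infinite. First I would note that all fibres are equinumerous: for $x\in\im(\Delta)$ pick $g_x$ with $\Delta(g_x)=x$, and then right multiplication by $g_x$ carries the fibre over the identity element bijectively onto the fibre over $x$. So it suffices to show that the fibre over the identity element of $\im(\Delta)$, which equals $\alpha_0\ker(\Delta)$, is infinite, i.e.\ that $[\ker(\Delta):\ker(\Delta)\cap G_{\alpha_0}]=\infty$. Here $\ker(\Delta)$ is open, as it contains the open subgroup $G_{\alpha_0}$, and $\ker(\Delta)\cap G_{\alpha_0}$ is compact and open in $\ker(\Delta)$; a finite index would therefore force $\ker(\Delta)$ to be compact. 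But a compact $\ker(\Delta)$ would be a compact, open, normal subgroup of $G$, and taking $U=\ker(\Delta)$ in Lemma~\ref{LSuborbits} (so that $g^{-1}Ug=U$ for every $g$) would give $\Delta(g)=1$ for all $g\in G$, contradicting that $G$ is not unimodular. Thus $\ker(\Delta)$ is non-compact, the index is infinite, and every fibre of $\varphi$ is infinite, completing the argument.
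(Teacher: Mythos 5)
Your proposal is correct, and the construction of $\varphi$ is essentially the paper's: both define $\varphi(\beta)=\Delta(g)$ for $\alpha_0 g=\beta$, note well-definedness from $G_{\alpha_0}\leq\ker(\Delta)$, and verify the morphism property via the arc-labelling and Theorem~\ref{TModulargraph} (the paper phrases the target as the Cayley digraph of $\im(\Delta)$ on the generators $\Delta(g_1),\dots,\Delta(g_n)$, which amounts to your basis identification $\Delta_i\mapsto e_i$; the fact you both need, that $n$ generators of a rank-$n$ free abelian group form a basis, is the Hopfian property of $\ZZ^n$). Where you genuinely diverge is the infinite-fibres step. The paper restricts to the subdigraph $\Gamma'$ spanned by a single arc orbit $(\alpha,\beta_1)G$, on which $H=\langle G_\alpha,g_1\rangle$ acts vertex- and arc-transitively with unequal in- and out-degrees (since $\Delta(g_1)\neq 1$), and then invokes Praeger's Theorem~\ref{TPraeger} together with the uniqueness of digraph morphisms to $\vZZ$ to conclude that the fibres of $\varphi$ restricted to $\Gamma'$, hence of $\varphi$, are infinite. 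You instead argue internally to the group: the fibres are exactly the translates $\alpha_0\ker(\Delta)g_x$, hence equinumerous, and the fibre over the identity is infinite because $\ker(\Delta)$ is open but noncompact --- compactness of the open normal subgroup $\ker(\Delta)$ would give $\Delta\equiv 1$ by Lemma~\ref{LSuborbits}, contradicting non-unimodularity. Each route has merits: the paper's exhibits the theorem as a direct bootstrap of Praeger's result (at the cost of quietly checking the hypotheses of Theorem~\ref{TPraeger} for $\Gamma'$, e.g.\ that its component is infinite and its degrees unequal); yours is self-contained, makes the role of the non-unimodularity hypothesis completely transparent, and yields the slightly stronger conclusion that all fibres of $\varphi$ are orbits of $\ker(\Delta)$ up to translation, in particular pairwise equinumerous.
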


\begin{proof}  
Fix a vertex $\alpha$ in $\Gamma$.  By vertex-transitivity we can choose representatives $(\alpha, \beta_1), \ldots, (\alpha, \beta_n)$ for the $n$ orbits on the arcs of $\Gamma$. Then we find elements $g_i\in G$ such that $\alpha g_i=\beta_i$. Since $G=\langle G_\alpha,g_1, \dots,g_n \rangle$ we see that $\langle \Delta(g_1),\dots,\Delta(g_n) \rangle = \im(\Delta)\cong \ZZ^n$ and the Cayley digraph $\Theta$ of $\im(\Delta)$ with respect to this generating set is isomorphic to $\vZZ^n$.  

For a vertex $\beta$ in $\Gamma$ find $g\in G$ such that $\alpha g=\beta$ and set $\varphi(\beta)=\Delta(g)$.
By Lemma~\ref{LSuborbits} the value of $\varphi(\beta)$ is independent of the choice of $g$, so the  map $\varphi\colon \V\Gamma\rightarrow\V\Theta$ is well-defined.  

Let $(\beta, \gamma)$ be an arc in $\Gamma$.  Find $g\in G$ and $1 \leq i \leq n$ such that $(\alpha, \beta_i)g =(\beta, \gamma)$.  Then $\alpha g_ig=\gamma$.  Thus $\varphi(\beta)=\Delta(g)$ and $\varphi(\gamma)=\Delta(g_ig)=\Delta(g_i)\Delta(g)=\Delta(g)\Delta(g_i)$.  The image under $\varphi$ of the arc $(\beta, \gamma)$ is the pair $(\Delta(g), \Delta(g)\Delta(g_i))$ that is indeed an arc in $\Theta$. Hence $\varphi$ is a morphism of digraphs.

Consider the digraph with vertex set $\V\Gamma$ and arc set $(\alpha, \beta_1)G$.  Set $H=\langle G_\alpha, g_1\rangle$. Let $\Gamma'$ be the component of this digraph that contains $·\alpha$.  The group $H$ acts arc-transitively on $\Gamma'$ and $\varphi$ restricted to $\Gamma'$ gives a graph morphism whose image is isomorphic to $\vZZ$.  

By Theorem~\ref{TPraeger} and the explanation following it the fibers of $\varphi$ are infinite.
\end{proof}

\begin{corollary}   Let $\Gamma$ be a digraph satisfying the conditions in Theorem~\ref{TPraeger-n}.  Then:
\begin{enumerate}
\item there exists a surjective digraph morphism $\varphi:\Gamma\rightarrow \vZZ$, that is to say, $\Gamma$ has Property Z;
    \item $\Gamma$ is bipartite.
\end{enumerate}
\end{corollary}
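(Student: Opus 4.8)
The plan is to derive both statements directly from the morphism $\varphi\colon\Gamma\to\vZZ^n$ produced by Theorem~\ref{TPraeger-n}, by post-composing it with a single, carefully chosen digraph morphism $\vZZ^n\to\vZZ$.

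For part (1), first I would introduce the coordinate-sum map $\sigma\colon\vZZ^n\to\vZZ$, $\sigma(x_1,\dots,x_n)=x_1+\dots+x_n$, and check that it is a surjective digraph morphism. An arc of $\vZZ^n$ increments exactly one coordinate by $1$ and fixes the rest, so $\sigma$ sends such an arc to the pair $(m,m+1)$ for $m=\sigma$ of its tail; since $m+1\neq m$ this is a genuine arc of $\vZZ$, and surjectivity onto $\ZZ$ is clear. Note here that the naive alternative, a single coordinate projection $\pr\colon\vZZ^n\to\vZZ$, is \emph{not} a digraph morphism, since an arc of $\vZZ^n$ may increment a coordinate other than the one being read off, mapping the arc onto the diagonal; this is why the summation map is the right choice. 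Composing, $\sigma\circ\varphi\colon\Gamma\to\vZZ$ is then a surjective digraph morphism, which is exactly the assertion that $\Gamma$ has Property Z.

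For part (2), I would use the morphism $\psi:=\sigma\circ\varphi$ just constructed to two-colour the vertices of $\Gamma$. Because $\psi$ is a digraph morphism into $\vZZ$, every arc $(\beta,\gamma)$ of $\Gamma$ satisfies $\psi(\gamma)=\psi(\beta)+1$; hence along each edge of the underlying undirected graph the integer $\psi$ changes by exactly $1$, so its parity flips. Colouring each vertex $\beta$ according to the parity of $\psi(\beta)\in\ZZ$ therefore yields a proper $2$-colouring of the underlying graph, which consequently has no odd closed walk. Thus $\Gamma$ is bipartite.

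I do not expect a genuine obstacle: both parts are formal consequences of Theorem~\ref{TPraeger-n}, and the only point demanding a moment of care is verifying that $\sigma$ (rather than a projection) is a digraph morphism. Once $\sigma$ is in hand, Property Z and bipartiteness both follow immediately from the single morphism $\psi=\sigma\circ\varphi$.
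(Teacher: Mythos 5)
Your proof is correct and follows essentially the same route as the paper: the paper also composes $\varphi$ with the coordinate-sum map $(x_1,\dots,x_n)\mapsto x_1+\cdots+x_n$ to get Property Z, and deduces bipartiteness from the existence of the resulting morphism $\Gamma\to\vZZ$ (your parity two-colouring just makes that last pullback step explicit). Your side remark that a coordinate projection would fail to be a digraph morphism is a nice observation, but there is no substantive difference in method.
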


\begin{proof}
The map $\vZZ^n\rightarrow \vZZ; (x_1, \ldots, x_n)\mapsto x_1+\cdots+x_n$ is clearly a surjective digraph morphism.  The composition of this map with the digraph morphism $\varphi$ from Theorem~\ref{TPraeger-n} we get a surjective digraph morphism $\Gamma\to \vZZ$.  

By the first part the digraph $\vZZ$ is the image of $\Gamma$ under a surjective morphism.  Since $\vZZ$ is bipartite the digraph $\Gamma$ must also be bipartite.
\end{proof}

\begin{example}\label{ex:Praeger-n}\(\)
\begin{enumerate}
    \item Let $p,q$ be positive, co-prime integers.
    Let $T$ be the regular directed tree with in-degree $2$ and out-degree $p+q$. Colour its arcs red and blue such that each vertex has $p$ red outgoing arcs, $q$ blue outgoing arcs and one incoming arc of each colour.
    Clearly, $\aut(T)$ is vertex- and arc-transitive. Let $G\leq\aut(T)$ be the subgroup that has the two colour classes as its arc orbits and let $\Delta$ denote the modular function on $G$.   By Theorem \ref{TModulargraph}, $\im(\Delta)$ is generated by $\{p,q\}$, and is therefore a free abelian group of rank two. Thus by Theorem \ref{TPraeger-n}, there is a surjective digraph morphism from $T$ to $\vZZ^2$ with infinite fibres.  Theorem~\ref{C-Adigraphs} implies that underlying undirected graph of $T$, the $(p+q+2)$-regular tree, is a minimal degree Cayley--Abels graph for $G$.
    \item Let $T_1,\dots,T_n$ be regular directed trees with in-degree one and out-degrees $d_1,\dots, d_n$ such that the $d_i$ are distinct prime numbers. Let $\Gamma=T_1\square\cdots\square T_n$ be the Cartesian product, that is, the vertex set is $\V T_1 \times \dots \times \V T_n$ and $((\alpha_1,\dots,\alpha_n),(\beta_1,\dots,\beta_n))$ is an arc if and only if there is $i$ such that $(\alpha_i,\beta_i)$ is an arc in $T_i$ and $\alpha_j = \beta_j$ for all $j \neq i$.
    In other words, it is the $1$-skeleton of the cube complex $T_1 \times \cdots \times T_n$, remembering the directions on the factors.
    Let $G=\autga$. It can be shown that $G = \aut(T_1)\times\cdots\times\aut(T_n)$; this is a consequence of the fact that two adjacent arcs are ``parallel to the same tree $T_i$'' if and only if they are not sides of the same (undirected) square (see also \cite[Corollary 6.12]{HammackImrichKlavzar2011} for a more general version). Now it is easy to see that $G$ has $n$ orbits on the arcs of $\Gamma$. Further, if $\Delta$ denotes the modular function on $G$, then $\im(\Delta)=\langle d_1,\dots,d_n\rangle$ is a free abelian group of rank $n$ (since the $d_i$ are distinct primes) and so there exists a surjective digraph morphism, $\varphi:\Gamma\to\vZZ^n$ with infinite fibres.
    Again Theorem \ref{C-Adigraphs} proves that $\Gamma$ is a minimal degree Cayley--Abels graph for $G$.
\end{enumerate}
\end{example}


The following proposition can be seen as an addendum to Theorem~\ref{TPraeger}.  

\begin{proposition}\label{Prop:coprime}
Let $\Gamma$ be a locally finite digraph and $G\leq \Aut(\Gamma)$ a subgroup acting arc- and vertex-transitively on $\Gamma$.   If the in- and out-degrees of $\Gamma$ are coprime then $G$ is highly-arc-transitive.  Furthermore, the subdigraph induced by the set of descendants of any vertex is a tree.
\end{proposition}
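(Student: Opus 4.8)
The plan is to establish both assertions simultaneously by induction on the distance $\ell$, coupling $\ell$-arc-transitivity with the tree structure up to distance $\ell$; the coprimality of the in-degree $q$ and the out-degree $p$ will enter twice. First I reduce to the main case. We may assume $\Gamma$ is connected. If $p=q$ then, being coprime, both equal $1$, and a connected such digraph is either the directed line $\vZZ$ or a finite directed cycle; the line case is immediate, and the finite case lies outside the infinite setting of Theorem~\ref{TPraeger} relevant here, so assume $p\neq q$. Since a finite vertex-transitive digraph has equal in- and out-degrees, $\Gamma$ is then infinite, and Theorem~\ref{TPraeger} furnishes a surjective morphism $\varphi\colon\Gamma\to\vZZ$. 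Writing $h=\varphi$ for the induced height function, every arc raises $h$ by one; hence $\Gamma$ has no directed cycles and no opposite pairs of arcs, every $s$-arc is automatically a directed path of strictly increasing height, and exactly $p^{s}$ (respectively $q^{s}$) directed $s$-paths begin at (respectively end at) any given vertex.

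For $\ell\geq 0$ consider the statements $(\mathrm{T}_\ell)$: any vertex $\gamma$ that is a descendant of $\alpha$ with $h(\gamma)-h(\alpha)\leq\ell$ is joined to $\alpha$ by a unique directed path; and $(\mathrm{A}_\ell)$: $G$ is transitive on directed $\ell$-paths. Both hold trivially for $\ell\leq 1$. I will show that $(\mathrm{T}_\ell)$ and $(\mathrm{A}_\ell)$ together imply $(\mathrm{A}_{\ell+1})$ and $(\mathrm{T}_{\ell+1})$. Granting this, $G$ is $\ell$-arc-transitive for every $\ell$, i.e. highly-arc-transitive, and $(\mathrm{T}_\ell)$ for all $\ell$ says exactly that each descendant of $\alpha$ is reached by a unique directed path (whence every arc between descendants is a tree edge), so the subdigraph induced on the descendants of $\alpha$ is a tree.

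For the transitivity step, fix a directed $\ell$-path $P=(\alpha_0,\dots,\alpha_\ell)$, let $A$ be the set of vertices joined to $\alpha_\ell$ by a directed $\ell$-path, and let $O=\out(\alpha_\ell)$; then $|O|=p$, and by $(\mathrm{T}_\ell)$ the $q^{\ell}$ directed $\ell$-paths ending at $\alpha_\ell$ have distinct starting vertices, so $|A|=q^{\ell}$. By $(\mathrm{A}_\ell)$ the stabiliser $H:=G_{\alpha_\ell}$ is transitive on those $\ell$-paths, hence on $A$, and by arc-transitivity it is transitive on $O$; thus $H$ acts on $A\times O$. I claim this action is transitive. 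Any orbit projects onto each factor (the factors being $H$-transitive), so its fibres over $A$ share a common size $x$ and its fibres over $O$ share a common size $y$, giving $q^{\ell}x=py$; since $\gcd(p,q)=1$ and $1\leq x\leq p$, this forces $x=p$ and the orbit to be all of $A\times O$. Hence $\mathrm{Stab}_H(\alpha_0)$ is transitive on $O$. Now $(\mathrm{T}_\ell)$ makes $P$ the unique directed path from $\alpha_0$ to $\alpha_\ell$, so an element fixing $\alpha_0$ and $\alpha_\ell$ fixes $P$ pointwise; thus $\mathrm{Stab}_H(\alpha_0)=G_{(P)}$, and $G_{(P)}$ is transitive on $\out(\alpha_\ell)$. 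Combined with $(\mathrm{A}_\ell)$ this yields $(\mathrm{A}_{\ell+1})$: given two directed $(\ell+1)$-paths, first match their initial $\ell$-paths using $(\mathrm{A}_\ell)$, then correct the last vertex using transitivity of $G_{(P)}$ on $\out(\alpha_\ell)$.

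For the tree step, $(\mathrm{A}_{\ell+1})$ implies that the number $m_{\ell+1}$ of directed $(\ell+1)$-paths joining an ancestor–descendant pair depends only on the height difference: transporting one such path to another by an automorphism matches the path-sets of the two pairs. Summing over the $p^{\ell+1}$ paths leaving a fixed vertex gives $m_{\ell+1}\mid p^{\ell+1}$, and summing over the $q^{\ell+1}$ paths entering a fixed vertex gives $m_{\ell+1}\mid q^{\ell+1}$; as $\gcd(p,q)=1$ we conclude $m_{\ell+1}=1$, which is $(\mathrm{T}_{\ell+1})$. The main obstacle, and the reason both assertions must be proved together, is exactly this coupling: the transitivity step needs uniqueness of paths $(\mathrm{T}_\ell)$ in order to identify the pointwise stabiliser of $P$ with the stabiliser of its two endpoints, while the uniqueness step needs transitivity $(\mathrm{A}_{\ell+1})$ to make the path-multiplicity constant so that the coprimality divisibility argument can be applied.
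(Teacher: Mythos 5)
Your proof is correct, but it takes a genuinely different route from the paper's. The paper's proof is a direct application of the arc-labelling machinery of the section: choosing $g$ with $\alpha g=\beta$ along an arc, Lemma~\ref{LSuborbits} gives $|(\alpha g^n)G_\alpha|/|\alpha G_{\alpha g^n}|=\Delta(g)^n=p^n/q^n$; since at most $p^n$ $n$-arcs start at $\alpha$, coprimality forces $|(\alpha g^n)G_\alpha|=p^n$ exactly, and a one-line count then yields, for every $n$ at once and with no induction, both transitivity of $G_\alpha$ on the $n$-arcs from $\alpha$ and injectivity of the map from $n$-arcs to terminal vertices (which is the tree statement). You avoid the suborbit-ratio identity entirely and instead import Theorem~\ref{TPraeger} to get a height function, then run a coupled induction in which coprimality enters twice: once in the orbit count $q^{\ell}x=py$ on $A\times O$ --- a clean instance of the general fact that a group acting transitively on two sets of coprime cardinalities acts transitively on their product --- which upgrades transitivity on endpoint pairs to transitivity of the pointwise path-stabilizer $G_{(P)}$ on $\out(\alpha_\ell)$ (and this is exactly where you need $(\mathrm{T}_\ell)$, to identify $\mathrm{Stab}_{G_{\alpha_\ell}}(\alpha_0)$ with $G_{(P)}$); and once in the divisibility argument $m_{\ell+1}\mid\gcd(p^{\ell+1},q^{\ell+1})=1$. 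What each approach buys: the paper's argument is shorter and showcases the modular function, which is the point of the section; yours is more elementary in its group-theoretic counting and self-contained modulo Praeger's theorem quoted as a black box (though note that theorem is itself proved via suborbit ratios, so the modular function lurks behind your height function as well). Two small points in your favour: your height function justifies that there are \emph{exactly} $p^n$ $n$-arcs from a vertex (no $2$-cycles, so the non-backtracking condition is vacuous), which the paper asserts without comment --- although for $p\neq q$ the labels $\Delta_{(\alpha,\beta)}=p/q\neq q/p=\Delta_{(\beta,\alpha)}$ exclude $2$-cycles anyway; and you correctly flag the degenerate case $p=q=1$, where a finite directed cycle satisfies the hypotheses yet its descendant subdigraph is not a tree, a case the paper's statement and proof pass over silently.
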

  
\begin{proof}
Let $q$ denote the in-degree of $\Gamma$ and let $p$ denote the out-degree. Take vertices  $\alpha$ and $\beta$ in $\Gamma$ such that $(\alpha, \beta)$ is an arc.  Choose some element $g\in G$ such that $\beta=\alpha g$.  The second formula for the modular function in Lemma~\ref{LSuborbits} now says that
$$\Delta(g)=\frac{|\beta G_\alpha|}{|\alpha G_\beta|} = \frac{p}{q}.$$
Then
$$\frac{|(\alpha g^n)G_\alpha|}{|\alpha G_{\alpha g^n}|}=\Delta(g^n)=\Delta(g)^n=\left(\frac{p}{q}\right)^n=\frac{p^n}{q^n}.$$
Note that $(\alpha, \alpha g, \ldots, \alpha g^n)$ is an $n$-arc in $\Gamma$.  There are $p^n$ distinct $n$-arcs in $\Gamma$ that start at $\alpha$ and each vertex in the orbit $(\alpha g^n)G_\alpha$ is the terminal vertex of such an arc.   Thus $|(\alpha g^n)G_\alpha|\leq p^n$.  But since $|(\alpha g^n)G_\alpha|/|\alpha G_{\alpha g^n}|=p^n/q^n$ and $p$ and $q$ are coprime we see that $|(\alpha g^n)G_\alpha|=p^n$ and $|\alpha G_{\alpha g^n}|=q^n$.   

If $\gamma$ is the terminal vertex of some $n$-arc starting at $\alpha$ then the orbit of $\gamma$ under $G_\alpha$ has precisely $p^n$ elements.  The number of $n$-arcs starting at $\alpha$ is $p^n$ and we see that $G_\alpha$ acts transitively on the set of $n$-arcs starting at $\alpha$.  As $G$ acts transitively on $\V \Gamma$ we see that $G$ acts transitively on the $n$-arcs in $\Gamma$.  Hence $\Gamma$ is highly-arc-transitive.  

Every arc in the subgraph induced by the set of descendants of $\alpha$ is contained in some $n$-arc staring at $\alpha$.  
No two distinct $n$-arcs starting at $\alpha$ have a common terminal vertex.  Thus the subgraph induced by the set of descendants of $\alpha$ is tree.
\end{proof}

In \cite{Moller2002} the scale function and tidy subgroups are analysed by using graph theoretical concepts.  A prominent role in this analysis is played by highly-arc-transitive digraphs such that the subgraph induced by the set of descendants of a vertex is a tree.  It is therefore not surprising that the above proposition can be interpreted as a result about tidy subgroups.

\begin{corollary}
  Let \(G\) be a totally disconnected, locally compact group.  Suppose $g\in G$ and $U$ is a compact, open subgroup of $G$ such that the two indices $|U:U\cap g^{-1}Ug|$ and $|U:U\cap gUg^{-1}|$ are coprime.  Then $U$ is tidy for $g$ and $s(g)=|U:U\cap g^{-1}Ug|$.
\end{corollary}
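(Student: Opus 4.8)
The plan is to realize the coprimality hypothesis geometrically as an in-/out-degree condition on a coset digraph, apply Proposition~\ref{Prop:coprime} there, and then read off the scale from the growth of a single suborbit via Proposition~\ref{prop:scale}. First I would dispose of the degenerate case $g \in U$: then $g^{-1}Ug = U$, so $|U : U \cap g^{-1}Ug| = 1$; since $s(g) \geq 1$ always while $s(g) \leq |U : U \cap g^{-1}Ug|$ by definition, we get $s(g) = 1 = |U : U \cap g^{-1}Ug|$ and $U$ is tidy. So I may assume $g \notin U$.

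Next I would set up the digraph. Let $\Omega = G/U$ be the coset space on which $G$ acts, put $\alpha = U$ so that $G_\alpha = U$, and let $\Gamma$ be the digraph with vertex set $\Omega$ and arc set the single $G$-orbit of $(\alpha, \alpha g)$. Since $g \notin U$ we have $\alpha g \neq \alpha$, so this orbit misses the diagonal and $\Gamma$ is a genuine digraph on which $G$ acts vertex- and arc-transitively. A direct stabilizer computation as in Lemma~\ref{LSuborbits} identifies the out-neighbours of $\alpha$ with the suborbit $(\alpha g)G_\alpha$, of size $|U : U \cap g^{-1}Ug| =: p$, and the in-neighbours of $\alpha$ with $(\alpha g^{-1})G_\alpha$, of size $|U : U \cap gUg^{-1}| =: q$. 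Thus $\Gamma$ has out-degree $p$ and in-degree $q$; these are finite because $U$ is compact open, and coprime by hypothesis.

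Now I would apply Proposition~\ref{Prop:coprime}: as the in- and out-degrees are coprime, $G$ is highly-arc-transitive on $\Gamma$ and the descendants of $\alpha$ induce a tree. The quantitative output established inside that proof is $|(\alpha g^n)G_\alpha| = p^n$ for all $n$ (the bound $\leq p^n$ coming from out-degree $p$, and equality from $\Delta(g^n) = (p/q)^n$ together with $\gcd(p^n,q^n)=1$). Feeding this into Proposition~\ref{prop:scale} gives
\[
s(g) = \lim_{n \to \infty} |(\alpha g^n)G_\alpha|^{1/n} = \lim_{n \to \infty}(p^n)^{1/n} = p = |U : U \cap g^{-1}Ug|.
\]
Since $s(g) \leq |V : V \cap g^{-1}Vg|$ holds for every compact, open subgroup $V$ by definition, this equality shows that the minimum defining the scale is attained at $U$, i.e.\ $U$ is tidy for $g$ and $s(g) = |U : U \cap g^{-1}Ug|$.

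The main subtlety is the bookkeeping that correctly matches the two hypothesis indices to the out- and in-degrees of the coset digraph, so that the coprimality hypothesis becomes exactly the one Proposition~\ref{Prop:coprime} demands; the remaining reliance is on the intermediate equality $|(\alpha g^n)G_\alpha| = p^n$ from that proposition's proof rather than merely its stated conclusion. If one prefers to invoke only the statement of Proposition~\ref{Prop:coprime}, one can instead re-derive $|(\alpha g^n)G_\alpha| = p^n$ from the tree-of-descendants structure (a rooted regular tree of out-degree $p$ has exactly $p^n$ vertices at directed distance $n$) together with high-arc-transitivity, which yields the same conclusion.
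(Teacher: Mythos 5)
Your proof is correct and its core is identical to the paper's: the paper constructs exactly the same coset digraph on $G/U$ with arc set $(\alpha,\alpha g)G$ where $\alpha = U$, identifies the out-degree with $p=|U:U\cap g^{-1}Ug|$ and the in-degree with $q=|U:U\cap gUg^{-1}|$, applies Proposition~\ref{Prop:coprime}, and extracts the identity $|(\alpha g^n)G_\alpha|=p^n$ — relying, just as you do, on the computation inside that proposition's proof rather than only on its statement. Where you diverge is the closing step: the paper concludes by quoting \cite[Corollary 3.5]{Moller2002}, an external criterion saying that $|U:U\cap g^{-n}Ug^n|=|U:U\cap g^{-1}Ug|^n$ for all $n$ is precisely tidiness, whereas you feed $|(\alpha g^n)G_\alpha|=p^n$ into Proposition~\ref{prop:scale} to compute $s(g)=p$ outright and then obtain tidiness directly from Definition~\ref{def:scale}, since the minimum defining the scale is attained at $U$. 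Your route buys self-containedness relative to the paper: Proposition~\ref{prop:scale} is stated in the text, while the tidiness criterion is not, and you get $s(g)=p$ as the primary output rather than as a consequence of tidiness. Two further small improvements on the paper's write-up: you dispose of the degenerate case $g\in U$, where $\alpha g=\alpha$ and the arc set would meet the diagonal so the digraph construction breaks down (a case the paper's proof silently skips, even though the coprimality hypothesis admits it), and you indicate how to recover $|(\alpha g^n)G_\alpha|=p^n$ from the stated conclusion of Proposition~\ref{Prop:coprime} alone, via the descendant tree and high-arc-transitivity, should one object to citing an intermediate equality from a proof.
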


\begin{proof}
We define a digraph $\Gamma$ such that the vertex set is the set of cosets $G/U$ and the set of arcs is the $G$-orbit $(\alpha, \beta)G$ where $\alpha=U$ and $\beta=Ug$.  Note that $G_\alpha=U$.  Then $\Gamma$ is an arc- and vertex-transitive digraph.  The out-degree is equal to $p=|U:U\cap g^{-1}Ug|$ and the in-degree is equal to $q=|U:U\cap gUg^{-1}|$.  Since the in- and out-degrees are coprime the last proposition applies and therefore, the digraph $\Gamma$ is highly-arc-transitive and the subgraph induced by the set of descendants of a vertex is a tree.  We also see that 
$$|U:U\cap g^{-n}Ug^n|=|(\alpha g^n)G_\alpha|=p^n=|U:U\cap g^{-1}Ug|^n.$$
Now \cite[Corollary 3.5]{Moller2002} says precisely that $U$ is tidy for $g$. 
\end{proof}

The following proposition is an $n$-dimensional version of Proposition~\ref{Prop:coprime} and can be proved in a similar way.  

\begin{proposition}
Let $\Gamma$ be a connected, locally finite digraph.  Let $G\leq \Aut(\Gamma)$ be a vertex-transitive subgroup with $n$ orbits $A_1,\dots,A_n$ on the arcs of $\Gamma$ and denote by $d_1^-, \ldots, d_n^-, d_1^+, \ldots, d_n^+$ the respective in- and out-degrees of these orbits at any given vertex.
Let $f \colon \A\Gamma \to \{1,\dots,n\}$ be the unique map satisfying $f(A_i)=i$ and denote with $f^s$ the induced map from the set of $s$-arcs of $\Gamma$ to $\{1,\dots,n\}^s$.

If each of the numbers $d_1^-, \ldots, d_n^-$ is coprime with each of the numbers $d_1^+, \ldots, d_n^+$, then $G$ acts transitively on each fibre of $f^s$.
\end{proposition}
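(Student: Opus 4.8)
The plan is to imitate the proof of Proposition~\ref{Prop:coprime}, replacing the single degree pair $(p,q)$ by the products of degrees taken along a prescribed colour sequence, and then to run the same counting-and-coprimality squeeze. Every quantity I will use (suborbit sizes, the arc-orbits $A_i$, the arc labels $\Delta_{(\alpha,\beta)}$) depends only on the orbits of $G$ on finite tuples of vertices, and these are unchanged if $G$ is replaced by its closure in $\Aut(\Gamma)$ with respect to the permutation topology; so I may assume $G$ is closed, whence $G$ is a totally disconnected, locally compact group with compact open vertex stabilizers and both the modular function $\Delta$ and Lemma~\ref{LSuborbits} are available. Next I would reduce to a single fibre: fix $\mathbf{j}=(j_0,\dots,j_{s-1})\in\{1,\dots,n\}^s$ and let $F_{\mathbf j}$ be the corresponding fibre of $f^s$, i.e.\ the set of $s$-arcs $(\alpha_0,\dots,\alpha_s)$ with $(\alpha_t,\alpha_{t+1})\in A_{j_t}$ for all $t$. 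The start-vertex map $F_{\mathbf j}\to\V\Gamma$ is $G$-equivariant and $G$ is vertex-transitive, so a routine orbit argument shows that $G$ is transitive on $F_{\mathbf j}$ as soon as, for one (hence every) vertex $\alpha$, the stabilizer $G_\alpha$ is transitive on the set $S_{\mathbf j}(\alpha)$ of members of $F_{\mathbf j}$ starting at $\alpha$. Thus it suffices to prove this last statement.

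The heart of the argument is a suborbit computation. Since $A_i$ is a single $G$-orbit on arcs, any two $A_i$-arcs issuing from $\alpha$ are interchanged by an element fixing $\alpha$, so $G_\alpha$ is transitive on the $A_i$-out-neighbours of $\alpha$; symmetrically $G_\beta$ is transitive on the $A_i$-in-neighbours of $\beta$. Hence for an arc $(\alpha_t,\alpha_{t+1})\in A_{j_t}$ the label is $\Delta_{(\alpha_t,\alpha_{t+1})}=|\alpha_{t+1}G_{\alpha_t}|/|\alpha_t G_{\alpha_{t+1}}|=d_{j_t}^+/d_{j_t}^-$. Writing $P_{\mathbf j}=\prod_t d_{j_t}^+$ and $Q_{\mathbf j}=\prod_t d_{j_t}^-$, Theorem~\ref{TModulargraph} gives, for any $g\in G$ sending $\alpha$ to the terminal vertex $\gamma=\alpha_s$ of a member of $S_{\mathbf j}(\alpha)$,
\[ \frac{|\gamma G_\alpha|}{|\alpha G_\gamma|}=\Delta(g)=\prod_{t=0}^{s-1}\frac{d_{j_t}^+}{d_{j_t}^-}=\frac{P_{\mathbf j}}{Q_{\mathbf j}}. \]
The coprimality hypothesis, applied to $P_{\mathbf j}$ (a product of out-degrees only) against $Q_{\mathbf j}$ (a product of in-degrees only), yields $\gcd(P_{\mathbf j},Q_{\mathbf j})=1$, so $P_{\mathbf j}\mid|\gamma G_\alpha|$ and therefore $|\gamma G_\alpha|\geq P_{\mathbf j}$.

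Finally I would close the squeeze. Counting colour-$\mathbf j$ directed walks from $\alpha$ gives $|S_{\mathbf j}(\alpha)|\leq P_{\mathbf j}$, since at the $t$-th step exactly $d_{j_t}^+$ out-arcs of colour $j_t$ are available. The terminal-vertex map $\mathrm{end}\colon S_{\mathbf j}(\alpha)\to\V\Gamma$ is $G_\alpha$-equivariant with image $I$, and $\gamma G_\alpha\subseteq I$ (for $h\in G_\alpha$ the arc $wh$ again lies in $S_{\mathbf j}(\alpha)$ and ends at $\gamma h$); hence $P_{\mathbf j}\leq|\gamma G_\alpha|\leq|I|\leq|S_{\mathbf j}(\alpha)|\leq P_{\mathbf j}$, forcing equality throughout. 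In particular $I=\gamma G_\alpha$ is a single $G_\alpha$-orbit and $\mathrm{end}$ is injective, so $\mathrm{end}$ is a $G_\alpha$-equivariant bijection onto a transitive $G_\alpha$-set; therefore $G_\alpha$ is transitive on $S_{\mathbf j}(\alpha)$, which completes the proof.

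I expect the main difficulty to be book-keeping rather than conceptual: identifying the arc labels with $d^+/d^-$ via transitivity of stabilizers on same-coloured neighbours, and, above all, invoking coprimality on the product of out-degrees against the product of in-degrees. This is precisely where the hypothesis that \emph{each} $d_i^-$ is coprime to \emph{each} $d_j^+$ is needed: mere pairwise coprimality of matched in/out pairs would not guarantee $\gcd(P_{\mathbf j},Q_{\mathbf j})=1$ for a mixed colour sequence $\mathbf j$. Once this is in place the squeeze runs automatically and specializes, in the one-colour case $n=1$ (where $\mathbf j$ is forced and $S_{\mathbf j}(\alpha)$ is the set of all $s$-arcs from $\alpha$), to the high-arc-transitivity statement of Proposition~\ref{Prop:coprime}.
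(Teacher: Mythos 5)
Your proof is correct and is essentially the paper's intended argument: the paper gives no separate proof, stating only that the proposition ``can be proved in a similar way'' to Proposition~\ref{Prop:coprime}, and your write-up carries out exactly that generalization --- arc labels $d_{j_t}^+/d_{j_t}^-$ via Lemma~\ref{LSuborbits} and Theorem~\ref{TModulargraph}, the walk-counting bound $|S_{\mathbf j}(\alpha)|\leq P_{\mathbf j}$, and the coprimality squeeze forcing the end map to be a $G_\alpha$-equivariant bijection onto a single suborbit. Your preliminary reduction to the closure of $G$ in the permutation topology (so that the modular function is available) is a careful touch the paper glosses over, and your observation about why full pairwise coprimality of the $d_i^-$ against the $d_j^+$ is needed for mixed colour sequences is exactly right.
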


\section{Simple composition factors and the scale function}\label{sec:composition}

In this section we study the interplay between the structure of compact, open subgroups, the scale function and the minimal degree of a Cayley--Abels graph.  In order to do so we must first proof a version of the Jordan--H\"older theorem for second countable, profinie groups and study the composition factors of compact open subgroups.  
For the reader not familiar with those terms it is enough to know that every compact subgroup of the automorphism group of a locally finite, connected graph is second countable and profinite.


\subsection{The profinite Jordan--H\"older theorem}

A \emph{composition series} for a second countable, profinite group $G$ is a countable descending subnormal series $G = G_0 \triangleright G_1 \triangleright G_2 \triangleright \cdots$  consisting of closed subgroups
such that $\bigcap_{i \geq 0} G_i = \{1\}$ and such that each \emph{composition factor} $G_{i-1}/G_i$ is simple. The number of times that a composition factor appears, up to isomorphism, is called its \emph{multiplicity}.
The multiplicity can be finite or countably infinite.
It is well-known that every profinite group has a neighbourhood basis consisting of open, normal subgroups. It is also well-known that closed subgroups and Hausdorff quotients of profinite groups are again profinite. These two facts together imply that the $G_i$ are open subgroups of $G$ and the composition factors are finite. By \cite[Lemma 0.3.1(h)]{Wilson1998} the $G_i$'s form a neighbourhood basis of the identity.

For finite groups, composition series and composition factors are intimately tied with the classical Jordan--H\"older theorem. An analogue of this theorem holds for profinite groups. This is well-known to experts, but due to the lack of a suitable reference we give a proof here.

\begin{remark}
The only published mention the authors found of the Jordan--H\"older theorem for profinite groups was \cite[Section 2.2]{KlopschVannacci2017}, but without proof. The authors are grateful to Benjamin Klopsch for pointing out to us the argument presented here and to Colin Reid for some consultation concerning the theorem. For a slightly different proof, see his \texttt{mathoverflow} post \cite{Reid2014}.
\end{remark}

A {\em refinement} of a subnormal series is a series that contains each subgroup of the original series. In particular, the series $G \triangleright \{1\}$ and $G \triangleright G \triangleright  \{1\}$ are refinements of each other.

\begin{theorem}[Jordan--H\"older Theorem for profinite groups]\label{thm:JHprofinite}
Let $G$ be a profinite group.
\begin{enumerate}
    \item Every descending subnormal series of $G$ can be refined into a composition series. In particular, $G$ has a composition series.
    \item If $G$ is profinite, then any two composition series have, up to isomorphism and permutation, the same composition factors appearing with the same (finite or countably infinite) multiplicity.
\end{enumerate}
\end{theorem}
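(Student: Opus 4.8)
The plan is to treat the two parts separately, reducing everything to the classical (finite) Jordan--H\"older theorem together with a topological version of the Zassenhaus butterfly lemma. The one structural fact I will use repeatedly is that in a compact group the product $AB$ of a closed subgroup $A$ normalised by a closed subgroup $B$ is again closed (it is the continuous image of the compact set $A\times B$ under multiplication), so all the products and intersections appearing below stay inside the class of closed, hence open, subgroups, and the factors that arise are finite.

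For part (1), I would first fix, using second countability and van Dantzig's theorem, a descending neighbourhood basis $G=U_0\supseteq U_1\supseteq U_2\supseteq\cdots$ of the identity consisting of open normal subgroups with $\bigcap_n U_n=\{1\}$. Each quotient $U_{n-1}/U_n$ is finite, so by the finite Jordan--H\"older theorem I can insert finitely many open subgroups $U_{n-1}=W_{n,0}\triangleright\cdots\triangleright W_{n,k_n}=U_n$ whose successive factors are simple (each $W_{n,j}$ is open, being a finite-index subgroup of the open group $U_{n-1}$ that contains $U_n$). Concatenating these finite chains over all $n$ produces a descending subnormal series of open subgroups with simple factors and intersection $\bigcap_n U_n=\{1\}$, i.e.\ a composition series; this proves existence. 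To refine a given descending subnormal series $G=H_0\triangleright H_1\triangleright\cdots$ of open subgroups (the only case that can occur, since the terms of a composition series are open) I would apply the same insertion inside each finite quotient $H_{i-1}/H_i$ and, if the series is finite with last term $H_m$, append a composition series of the profinite group $H_m$ furnished by the existence statement.

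For part (2), let $G=G_0\triangleright G_1\triangleright\cdots$ and $G=H_0\triangleright H_1\triangleright\cdots$ be two composition series. I would form their mutual Schreier refinements by setting $G_{i,j}=G_{i+1}(G_i\cap H_j)$ and $H_{j,i}=H_{j+1}(H_j\cap G_i)$; these are closed by the remark above and give, for each fixed $i$, a descending chain from $G_{i,0}=G_i$ towards $G_{i+1}$, and symmetrically in the other series. The topological Zassenhaus lemma then yields isomorphisms
\[
G_{i,j}/G_{i,j+1}\;\cong\;H_{j,i}/H_{j,i+1}
\]
for all $i,j$, i.e.\ a factor-preserving bijection $(i,j)\leftrightarrow(j,i)$ between the factors of the two refinements. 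It then remains to identify the nontrivial factors of the $G$-refinement with those of the original series $\{G_i\}$: since $G_i/G_{i+1}$ is simple, the chain $(G_{i,j})_j$ has exactly one nontrivial step, and a compactness argument shows $\bigcap_j G_{i,j}=G_{i+1}\bigl(G_i\cap\bigcap_j H_j\bigr)=G_{i+1}$, so it genuinely converges to $G_{i+1}$ and contributes a single factor isomorphic to $G_i/G_{i+1}$. Hence the multiset of nontrivial factors of the $G$-refinement equals that of $\{G_i\}$, and symmetrically for $H$; combined with the Zassenhaus bijection this gives the desired equality of composition factors with multiplicities (finite or countably infinite).

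The main obstacle is the passage from the finite to the countably infinite setting: the classical argument produces finite refinements, whereas here both refinements are indexed by $\NN\times\NN$, and I must guarantee that the inserted chains converge to the correct subgroups and that no spurious infinite families of trivial factors corrupt the multiplicity count. This is exactly where the compactness identity $\bigcap_j A B_j=A\bigl(\bigcap_j B_j\bigr)$ for a descending sequence of closed subgroups $B_j$ (proved by extracting a convergent subnet in the compact group $A$) does the real work, together with the fact that the $H_j$ form a neighbourhood basis so that $\bigcap_j H_j=\{1\}$.
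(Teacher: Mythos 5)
Your proof is correct, and while your part (1) is essentially the paper's (which disposes of it in one sentence: insert finite Jordan--H\"older refinements into the finite quotients along a descending chain of open normal subgroups with trivial intersection), your part (2) takes a genuinely different route. The paper never performs an infinite refinement: it fixes a finite simple group $A$, truncates the first series at a stage $G_k$ capturing $n\leq n_1$ occurrences of $A$, passes to the open normal cores $N=\bigcap_{g\in G}g^{-1}G_kg$ and $N'=\bigcap_{g\in G}g^{-1}H_\ell g$ (where $H_\ell\leq N$ exists because the terms of a composition series form a neighbourhood basis of the identity), and compares the two resulting \emph{finite} subnormal series from $G$ down to $N'$ via the classical Jordan--H\"older theorem applied to the finite group $G/N'$, concluding $n_2\geq n'-n''\geq n$. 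You instead topologize the Schreier--Zassenhaus proof wholesale: the doubly indexed refinement $G_{i,j}=G_{i+1}(G_i\cap H_j)$ stays closed because products of compact subgroups are compact, the butterfly lemma supplies the factor correspondence $(i,j)\leftrightarrow(j,i)$ purely algebraically, and your two substitutes for finiteness are exactly the right ones: the compactness identity $\bigcap_j G_{i+1}(G_i\cap H_j)=G_{i+1}\bigl(G_i\cap\bigcap_j H_j\bigr)=G_{i+1}$ (valid by the convergent-subnet argument you indicate), and the simplicity of $G_i/G_{i+1}$, which --- since each $G_{i,j}$ contains $G_{i+1}$ and $G_{i,j+1}\triangleleft G_{i,j}$ --- forces the row $(G_{i,j})_j$ to equal $G_i$ up to some finite index and $G_{i+1}$ thereafter, hence to contribute exactly one nontrivial factor, isomorphic to $G_i/G_{i+1}$. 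Your route buys more than the paper's: it yields an explicit position bijection between the factors of the two series, not merely equal multiplicities of each fixed $A$, and it needs only $\bigcap_j H_j=\{1\}$ rather than the neighbourhood-basis property of the terms; the paper's route buys economy, requiring no infinite lattice bookkeeping beyond cofinality plus the finite theorem. One shared caveat: like the paper, your part (1) implicitly restricts to series whose terms are open and whose intersection is trivial --- as your parenthetical correctly observes, otherwise no refinement can be a composition series, since all terms of a composition series are open.
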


\begin{proof}
The first part can be proven verbatim as in the finite case, together with the classical fact that $G$ has a neighbourhood basis of the identity consisting of open, normal subgroups.

For the second part, let $G=G_0 \triangleright G_1 \triangleright G_2 \triangleright \cdots$ and $G=H_0 \triangleright H_1 \triangleright H_2 \triangleright \cdots$ be two composition series for $G$.
Let $A$ be a finite simple group. Let $n_1, n_2 \in \mathbb{N} \cup \{\infty \}$ be the multiplicities of $A$ in the first and the second composition series, respectively.
By symmetry, it is enough to show that $n_2 \geq n_1$. Let $n \in \NN$ with $n \leq n_1$.
Choose $k \geq 0$ such that $A$ appears at least $n$ times as quotient in the series $G_0 \triangleright \dots \triangleright G_k$.
Recall that since $G_k$ is open, it has finite index in $G$, so it has finitely many $G$-conjugates and $N= \bigcap_{g \in G} g^{-1}G_{k} g$ is an open, normal subgroup of $G$.
Now recall that the $H_i$ form a neighbourhood basis of the identity, so there exists $\ell \geq 1$ such that $H_\ell \leq N$.
Now setting $N' = \bigcap_{g \in G} g^{-1}H_{\ell} g$ we are getting two finite subnormal series
\begin{align*}
    G &= G_0 \triangleright G_1 \triangleright G_2 \triangleright \dots \triangleright G_k \triangleright N \triangleright H_1 \cap N \triangleright \dots \triangleright H_\ell \cap N = H_\ell \triangleright N' \\
    G &= H_0 \triangleright H_1 \triangleright H_2 \triangleright \dots \triangleright  
         H_\ell \triangleright N'.
\end{align*}
We can refine those subnormal series so that they have simple subquotients.
Let $n'$ and $n''$ be the multiplicities of $A$ in composition series for the finite groups $G/N'$ and $H_\ell/N'$, respectively.
By the Jordan--H\"older theorem for finite groups and the third isomorphism theorem, $A$ has to appear $n'$ times as a quotient in the refinement of $G = H_0 \triangleright H_1 \triangleright H_2 \triangleright \dots \triangleright  
         H_\ell \triangleright N'$ and $n''$ times in the refinement of $H_\ell \triangleright N'$.
But note that $n_2 \geq n'-n'' \geq n$. Since $n \leq n_1$ was arbitrary, we are done.
\end{proof}

By the above theorem the following concept is well-defined.

\begin{definition}
  Let $G$ be a profinite group. A finite, simple group is a \emph{composition factor} of $G$  with \emph{multiplicity} $n \in \NN \cup \{\infty\}$ if it is a composition factor with multiplicity $n$ in one, and hence every, composition series of $G$.
\end{definition}

\subsection{The local simple content}

Let $G$ be a totally disconnected, locally compact group.
We are interested in the simple groups that appear as composition factors of every open subgroups of $G$. The following definition is inspired by Gl\"ockner's concept of the local prime content, see \cite{Glockner2006}, and the work of Caprace, Reid and Willis in \cite{CapraceReidWillis2017a}.

\begin{definition}
  The \emph{local simple content} of a second countable, totally disconnected, locally compact group $G$ is the set of finite, simple groups (up to isomorphism) that are a composition factor of every compact, open subgroup of $G$.
\end{definition}

The following lemma allows us to detect the local simple content by only looking at one composition series of one compact, open subgroup.

\begin{lemma}\label{lem:local simple content equivalences}
Let $A$ be a finite simple group and let $G$ be a second countable, \tdlc group. The following are equivalent.
\begin{enumerate}
    \item The group $A$ is in the local simple content of $G$.
    \item There exists a compact, open subgroup $U \leq G$ such that $A$ is a composition factor with infinite multiplicity in $U$.
    \item For every compact open subgroup $U \leq G$, the group $A$ is a composition factor with infinite multiplicity in $U$.
\end{enumerate}
\end{lemma}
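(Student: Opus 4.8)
The plan is to prove the cycle of implications (1) $\Rightarrow$ (2) $\Rightarrow$ (3) $\Rightarrow$ (1). Throughout I use that a compact, open subgroup of a \tdlc group is a (second countable) profinite group, so that Theorem~\ref{thm:JHprofinite} applies to it. The implication (3) $\Rightarrow$ (1) is immediate, since infinite multiplicity is in particular positive multiplicity: if $A$ is a composition factor with infinite multiplicity in every compact, open subgroup, then it is a composition factor of each of them and hence lies in the local simple content. For (1) $\Rightarrow$ (2) I would argue by contradiction. Fix any compact, open subgroup $U$ (one exists by van Dantzig's theorem \cite{Dantzig1936}) and, using Theorem~\ref{thm:JHprofinite}(1), a composition series $U = U_0 \triangleright U_1 \triangleright \cdots$ with $\bigcap_i U_i = \{1\}$. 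If $A$ had only finite multiplicity in $U$, it would occur among the factors $U_i/U_{i+1}$ finitely often, so there is an index $k$ beyond which it never occurs. But $U_k$ is open in $U$ and hence in $G$, and closed in the compact group $U$, so $U_k$ is itself a compact, open subgroup of $G$; moreover $U_k \triangleright U_{k+1} \triangleright \cdots$ is a composition series of $U_k$ with trivial intersection, whose factors are exactly those with $i \geq k$. Thus $A$ is not a composition factor of $U_k$, contradicting (1). Hence $A$ has infinite multiplicity in $U$, which is (2).

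The heart of the proof, and the step I expect to require the most care, is (2) $\Rightarrow$ (3): upgrading infinite multiplicity in one compact, open subgroup to infinite multiplicity in all of them. The key external input is that any two compact, open subgroups $U, V$ of a \tdlc group are commensurable, so that $W := U \cap V$ is again a compact, open subgroup of finite index in both (an open subgroup of a compact group has finite index). I would isolate the following sublemma as the main tool: if $N \trianglelefteq H$ is an open normal subgroup of a profinite group $H$, then the multiplicity of $A$ in $H$ equals the multiplicity of $A$ in the finite group $H/N$ plus its multiplicity in $N$. This follows by lifting a composition series of the finite group $H/N$ through $N$, concatenating it with a composition series of $N$ to obtain a composition series of $H$, and invoking the well-definedness of multiplicities in Theorem~\ref{thm:JHprofinite}(2). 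Since $H/N$ is finite, the first summand is finite, and therefore $A$ has infinite multiplicity in $H$ if and only if it has infinite multiplicity in $N$.

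With the sublemma in hand I would finish (2) $\Rightarrow$ (3) as follows. Given $U$ as in (2) and an arbitrary compact, open subgroup $V$, set $W = U \cap V$ and let $N_U = \bigcap_{u \in U} u^{-1} W u$ be the normal core of $W$ in $U$; since $W$ has finite index in $U$ it has finitely many conjugates, so $N_U$ is an open normal subgroup of $U$ contained in $W$. As $N_U \trianglelefteq U$ and $W \leq U$, the subgroup $N_U$ is also open and normal in $W$. Applying the sublemma once to $N_U \trianglelefteq U$ and once to $N_U \trianglelefteq W$ shows that $A$ has infinite multiplicity in $U$ if and only if it has infinite multiplicity in $N_U$ if and only if it has infinite multiplicity in $W$. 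Running the identical argument with the normal core of $W$ in $V$ shows that infinite multiplicity in $V$ is equivalent to infinite multiplicity in $W$. Chaining these equivalences transfers infinite multiplicity from $U$ to $V$, and since $V$ was arbitrary this gives (3). The implications (1) $\Rightarrow$ (2) $\Rightarrow$ (3) $\Rightarrow$ (1) then form a complete cycle; beyond the profinite Jordan--Hölder theorem the only genuinely nontrivial ingredient is the commensurability of compact, open subgroups.
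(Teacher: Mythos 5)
Your proof is correct, but it reaches the hard step by a genuinely different route than the paper. The paper proves the cycle $(1)\Rightarrow(3)\Rightarrow(2)\Rightarrow(1)$: for $(1)\Rightarrow(3)$ it observes that every tail $U_k \triangleright U_{k+1} \triangleright \cdots$ of a composition series is itself a composition series of the compact, open subgroup $U_k$, so membership in the local simple content forces $A$ into every tail and hence infinitely often --- your $(1)\Rightarrow(2)$ is the contrapositive of exactly this observation. For the remaining hard direction the paper works with $(2)\Rightarrow(1)$ and intertwines the two composition series directly: using the fact (cited from Wilson) that the terms of any composition series form a neighbourhood basis of the identity, it finds $V_{m_0} \leq U_0$, passes to the series $V \triangleright \cdots \triangleright V_{m_0} \cap U_0 \triangleright V_{m_0} \cap U_1 \triangleright \cdots$, and notes that $V_{m_0} \cap U_n = U_n$ for large $n$, so the tail in which $A$ occurs survives into a composition series of $V$; Theorem~\ref{thm:JHprofinite}(2) then transfers $A$ to $V$. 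You instead prove $(2)\Rightarrow(3)$ by reducing everything to an additivity sublemma --- for $N$ open and normal in a profinite group $H$, the multiplicity of $A$ in $H$ is that in the finite quotient $H/N$ plus that in $N$, obtained by concatenating a lifted composition series of $H/N$ with one of $N$ and invoking Theorem~\ref{thm:JHprofinite}(2) --- and then shuttle between $U$ and $V$ through $W = U \cap V$ via the normal cores of $W$ in each. Both arguments ultimately rest on the profinite Jordan--H\"older theorem, but yours avoids the neighbourhood-basis property of composition-series terms entirely, needing only commensurability and finite-index normal cores, and the additivity lemma is a reusable statement in its own right; the paper's intertwining is more direct but tied to the specific pair of series. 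One point worth making explicit in your write-up: second countability passes to $W$, $N_U$ and $N_V$ as closed subgroups of second countable groups, which is what licenses applying Theorem~\ref{thm:JHprofinite}(2) to them --- the paper itself flags that part (2) is not known to hold without second countability.
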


\begin{proof}
First we prove that (1) implies (3).
Assume that $A$ is in the local simple content of $G$. Let $U$ be a compact open subgroup of $G$ and let $U = U_0 \triangleright U_1 \triangleright \cdots$ be a composition series for $U$. Then, for every $k \geq 0$ the series $U_k \triangleright U_{k+1} \triangleright U_{k+2} \triangleright \cdots$ is a composition series for $U_k$ and $A$ has to appear as composition factor. Consequently, $A$ appears as composition factor of $U$ infinitely often.

It is obvious that (3) implies (2).

To show that (2) implies (1), assume that $U$ is a compact, open subgroup of $G$ and $U = U_0 \triangleright U_1 \triangleright \cdots$ is a composition series for $U$ such that $A$ appears as composition factor infinitely often. Let $V$ be any compact open subgroup of $G$ and let $V=V_1 \triangleright V_2 \triangleright \cdots$ be a composition series for $V$. Since the $V_i$ form a neighbourhood basis of the identity, there exists $m_0 \geq 1$ with $V_{m_0} \leq U_{0}$. Now the series
\[
V=V_0 \triangleright \dots \triangleright V_{m_0} = V_{m_0} \cap U_{0} \triangleright V_{m_0} \cap U_{1} \triangleright V_{m_0} \cap U_{2} \triangleright \cdots
\]
is a new composition series for $V$. By the Jordan--H\"older theorem for profinite groups, it has the same composition factors as the original series for $V$. But the $U_i$ form a neighbourhood basis of the identity, so for large $n$ the equality $V_{m_0} \cap U_{n} = U_{n}$ holds. Since $A$ appears as quotient of the subnormal series $U_{n} \triangleright U_{n+1} \triangleright \cdots$, we see that $A$ is a composition factor of $V$.

\end{proof}

The following considerations rely heavily on the proof of \cite[Proposition 4.6]{CapraceReidWillis2017a}.
A \emph{subquotient} of a group $H$ is a quotient of a subgroup of $H$.
Let $G$ be a \cgtdlc group and let $U \leq G$ be a compact, open subgroup.
Let $\Gamma$ be a Cayley--Abels graph for $G$ such that $U=G_\alpha$ for some vertex $\alpha \in \V\Gamma$.
We can use $\Gamma$ to produce a subnormal series for $U$.
Namely, take subgraphs $\Gamma_0 \subseteq \Gamma_1 \subseteq \Gamma_2 \subseteq \cdots$ of $\Gamma$ as follows:
\begin{enumerate}
\item The subgraph $\Gamma_0$ consists only of the vertex $\alpha$.
\item  For $i \geq 1$, choose a vertices $\alpha_i \in \V\Gamma_{i-1}$ satisfying the following two conditions.
\begin{itemize}
    \item The set of neighbours of $\alpha_i$ is not contained in $\V\Gamma_{i-1}$, and
    \item $\bigcup_{i \geq 0} \V\Gamma_i = \V\Gamma$. 
\end{itemize}
\item Now define $\Gamma_i$ to be the subgraph of $\Gamma$ induced by $\Gamma_{i-1}$ and $\Gamma(\alpha_i)$.
\end{enumerate}
The condition $\bigcup_{i \geq 0} \V\Gamma_i = \V\Gamma$ can, for example, be achieved by requiring that all vertices which have a strictly smaller distance to $\alpha$ are already contained in $\Gamma_{i-1}$.

Write $G_i = G_{(\Gamma_i)}$. The sequence
\[
U = G_{0} \triangleright G_{1} \triangleright G_{2} \triangleright \cdots
\]
is a descending subnormal series for $U$. Note that there could be repetitions in the series.
The condition
$\bigcup_{i \geq 0} \V\Gamma_i = \V\Gamma$ implies that
$\bigcap_{i \geq 0} G_{i} = K$, where $K$ is the kernel of the action of $G$ on $\Gamma$.
Note that $G_{i-1}$ permutes those neighbours of $\alpha_i$ that are not already contained in $\Gamma_{i-1}$ and the kernel of this action is $G_{i}$, so for $i \geq 2$ the group $G_{i-1}/G_{i}$ is a subgroup of the symmetric group $S_{d-1}$.
More precisely, let $\beta_i \in \V\Gamma_{i-1}$ be a neighbour of $\alpha_i$, then $G_{i-1}/G_{i}$ is a subquotient of the stabilizer of $\beta_i$ in the local action of $G$ on $\Gamma$.
Quotienting every subgroup in the given subnormal series for $U$ by $K$ gives a subnormal series for $U/K$, which by the Jordan--H\"older theorem for profinite groups can be refined into a composition series for $U/K$. This provides, together with Lemma \ref{lem:local simple content equivalences}, a proof of the following theorem.

%
%
%
%

\begin{theorem}{\rm(Cf.~\cite[Proposition 4.6]{CapraceReidWillis2017a})}
\label{thm:localsimplecontent}
Let $G$ be a compactly generated, totally disconnected, locally compact group and let $\Gamma$ be a Cayley--Abels graph for $G$ of degree $d$.  Define  $K$ as the kernel of the action of $G$ on $\Gamma$. Let $L \leq S_d$ be the local action of $G$ on $\Gamma$.

Suppose $A$ is an element of the local simple content of $G/K$.
Then $A$ is a subquotient of a point stabilizer in $L$, i.e. there exists $i \in \{1,\dots,d\}$ and $N \triangleleft H \leq L_i$ with $H/N \cong A$.
In particular, $H/N$ is a subquotient of $S_{d-1}$.
\end{theorem}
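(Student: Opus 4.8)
The plan is to exploit the descending subnormal series $U = G_0 \triangleright G_1 \triangleright \cdots$ built just above the statement, where $U = G_\alpha$ and $G_i = G_{(\Gamma_i)}$, and to track precisely where the finite simple group $A$ can occur as a composition factor. First I would pass to the faithful action on $\Gamma$. Since $K = \bigcap_{i \geq 0} G_i$ is contained in every $G_i$, the series descends to a subnormal series $U/K = G_0/K \triangleright G_1/K \triangleright \cdots$ for the compact open subgroup $U/K = (G/K)_\alpha$ of $G/K$, with trivial intersection and with unchanged subquotients $(G_{i-1}/K)/(G_i/K) \cong G_{i-1}/G_i$. Passing to $G/K$ does not disturb the local action, since $K$ acts trivially and $L$ is read off the neighbourhood of $\alpha$.

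Next I would apply the profinite Jordan--H\"older theorem (Theorem \ref{thm:JHprofinite}): refine the descended series to a composition series for $U/K$, and use the invariance of the composition factors and their multiplicities. By Lemma \ref{lem:local simple content equivalences}, the hypothesis that $A$ lies in the local simple content of $G/K$ says exactly that $A$ occurs with \emph{infinite} multiplicity as a composition factor of the compact open subgroup $U/K$. Hence $A$ appears infinitely often among the composition factors of any refinement of the descended series.

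Now I would isolate the first factor. The quotient $G_0/G_1$ is precisely the full local action $L \cong G_\alpha/(G_\alpha \cap G_{(\Gamma(\alpha))})$, which is finite and therefore contributes only finitely many composition factors. Since $A$ appears infinitely often, all but finitely many of its copies must come from the later factors $G_{i-1}/G_i$ with $i \geq 2$; in particular $A$ is a composition factor of some such $G_{i-1}/G_i$. By the construction, each of these is a subquotient of a point stabilizer $L_j$ in $L$: indeed $G_{i-1}$ fixes pointwise both $\alpha_i$ and its neighbour $\beta_i \in \V\Gamma_{i-1}$, so the image of $G_{i-1}$ in the local action at $\alpha_i$ lies in the stabilizer of $\beta_i$. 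As a composition factor of a subquotient of $L_j$ is again a subquotient of $L_j$ (subquotients compose), I conclude that there exist $N \triangleleft H \leq L_j$ with $H/N \cong A$. Finally, a point stabilizer $L_j$ in $L \leq S_d$ permutes the remaining $d-1$ points, so $L_j \leq S_{d-1}$ and hence $H/N \cong A$ is a subquotient of $S_{d-1}$.

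The step I expect to be the crux is the use of \emph{infinite} multiplicity to push $A$ past the single distinguished factor $G_0/G_1 = L$. Without it, one could only conclude that $A$ is a composition factor of the whole local action $L$, which is strictly weaker than being a subquotient of a point stabilizer $L_j$; it is the combination of the infinite-multiplicity characterization in Lemma \ref{lem:local simple content equivalences} with the finiteness of $L$ that forces $A$ into the point-stabilizer factors. The remaining verifications---that the descended series has the claimed factors and trivial intersection, and that each factor beyond the first is a subquotient of a point stabilizer---are routine given the construction.
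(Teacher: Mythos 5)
Your proposal is correct and follows essentially the same route as the paper: the paper constructs exactly this series $G_i=G_{(\Gamma_i)}$, observes that $G_{i-1}/G_i$ for $i\geq 2$ is a subquotient of a point stabilizer in the local action, quotients by $K$, refines via the profinite Jordan--H\"older theorem, and invokes Lemma~\ref{lem:local simple content equivalences} just as you do, with the infinite multiplicity of $A$ forcing at least one occurrence past the single finite factor $G_0/G_1\cong L$. You have correctly identified the crux that the paper leaves terse (\lq\lq together with Lemma~\ref{lem:local simple content equivalences}\rq\rq), and your spelling-out of it matches the intended argument.
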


This theorem can be applied to the automorphism group of a regular tree.

\begin{corollary}\label{cor:regular tree}
Let $d \geq 2$ be a positive integer and assume $d \neq 5$.
Let $T_d$ be the $d$-regular tree.
Then $\md(\Aut(T_d))=d$ and $T_d$ is a minimal Cayley--Abels graph for $\Aut(T_d)$.
\end{corollary}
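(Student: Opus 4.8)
The plan is to prove the equality by sandwiching $\md(\Aut(T_d))$ between $d$ and $d$. The upper bound is immediate: $\Aut(T_d)$ acts vertex-transitively on $T_d$, the suborbits are the finite spheres around a vertex, so by Lemma~\ref{lem:compact-cocompact}(1) the vertex stabilizer is a compact open subgroup. Hence $T_d$ is a Cayley--Abels graph of degree $d$ (and $\Aut(T_d)$ is compactly generated), giving $\md(\Aut(T_d)) \le d$. The case $d=2$ is degenerate and I would dispatch it first: $T_2$ is the line $\ZZ$ and $\Aut(T_2) \cong D_\infty$ is non-compact, so $\md \ge 2$ by Theorem~\ref{T2ends} (it can be neither $0$ nor $1$), forcing equality. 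For the rest assume $d \ge 3$.

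For the lower bound, let $\Gamma$ be an arbitrary Cayley--Abels graph for $G = \Aut(T_d)$, write $d'$ for its degree, let $K$ be the kernel of the action on $\Gamma$ and $L \le S_{d'}$ the local action; the goal is $d' \ge d$. First I would pin down $K$: being closed, normal, and contained in the compact vertex stabilizer, $K$ is a compact normal subgroup of $G$, and for $d \ge 3$ the group $\Aut(T_d)$ has no non-trivial compact normal subgroup (any such subgroup would centralise the non-compact simple Tits subgroup and hence be trivial). Thus $K=\{1\}$ and $G/K = G$, so the local simple content I must compute is that of $G$ itself.

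Next I would identify the local simple content of $G$ by filtering the compact open subgroup $U = G_\alpha$ by the pointwise stabilizers $U_n$ of the ball of radius $n$ about $\alpha$. Here $U_0/U_1 \cong S_d$ (the action on the $d$ neighbours of $\alpha$), while for every $n \ge 2$ the quotient $U_{n-1}/U_n$ is a direct product of copies of $S_{d-1}$, one for each vertex at distance $n-1$, since $\Aut(T_d)$ realises the full symmetric group independently on the $d-1$ forward neighbours of each such vertex. The number of these vertices grows without bound, so by the Jordan--H\"older theorem for profinite groups (Theorem~\ref{thm:JHprofinite}) and Lemma~\ref{lem:local simple content equivalences} the local simple content of $G$ equals the set of composition factors of $S_{d-1}$: it is $\{C_2\}$ for $d=3$, it is $\{C_2,C_3\}$ for $d=4$ and $d=5$, and it is $\{C_2, A_{d-1}\}$ for $d \ge 6$.

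Finally I would apply Theorem~\ref{thm:localsimplecontent}: each member $A$ of the local simple content of $G/K=G$ is a subquotient of a point stabilizer in $L$, hence of $S_{d'-1}$, so in particular $|A|$ divides $(d'-1)!$. Choosing the member of largest order finishes each case by a plain divisibility count. For $d \ge 6$ take $A = A_{d-1}$: since $d-1>2$ we have $|A_{d-1}| = (d-1)!/2 > (d-2)! = |S_{d-2}|$, so $(d'-1)! \ge |A_{d-1}| > (d-2)!$ forces $d'-1 \ge d-1$. For $d=4$ take $A=C_3$, so $3 \mid (d'-1)!$ forces $d'-1\ge 3$; for $d=3$ take $A=C_2$, forcing $d'-1\ge 2$. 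In every case $d' \ge d$, so $\md(\Aut(T_d)) = d$ and $T_d$ is a minimal Cayley--Abels graph. The main obstacle is exactly the step that fails for $d=5$: there the largest composition factor of $S_4$ is only $C_3$ (because $A_4$ is not simple), which merely forces $d'-1 \ge 3$, i.e.\ $\md \ge 4$, one short of the truth; recovering $\md(\Aut(T_5)) = 5$ requires the separate, finer argument of Section~\ref{sec:5-valent}.
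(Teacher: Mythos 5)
Your proposal is correct and follows essentially the same route as the paper's own proof: upper bound from $T_d$ itself, the case $d=2$ via Theorem~\ref{T2ends}, triviality of the kernel from the absence of non-trivial compact normal subgroups in $\Aut(T_d)$, computation of the local simple content from the ball-stabilizer filtration together with Lemma~\ref{lem:local simple content equivalences}, and the lower bound from Theorem~\ref{thm:localsimplecontent}. The only difference is cosmetic: where the paper says ``recall, or prove as exercise'' that $d-1$ is the minimal $k$ with $A_{d-1}$ a subquotient of $S_k$, you inline the natural solution of that exercise, namely the Lagrange/divisibility count $|A_{d-1}|=(d-1)!/2>(d-2)!$, so $|A_{d-1}|$ cannot divide $(d'-1)!$ unless $d'\geq d$ --- and your closing observation that this count only yields $\md\geq 4$ when $d=5$ (since $A_4$ is not simple) matches exactly the paper's reason for treating $\Aut(T_5)$ separately in Section~\ref{sec:5-valent}.
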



\begin{proof}
First it is clear that $T_d$ indeed is a Cayley--Abels graph for $\aut(T_d)$.

For $d=2$ we refer to Theorem \ref{T2ends}.

Let now $d \geq 3$, but $d \neq 5$. We can use the method described before Theorem \ref{thm:localsimplecontent} with $\Gamma = T_d$ to obtain a subnormal series for a vertex stabilizer.  The quotients of this subnormal series are all, except the first, isomorphic to $S_{d-1}$, which has the composition factors $A_{d-1}$ and $\mathbb{Z}/2\mathbb{Z}$.
By Lemma \ref{lem:local simple content equivalences} the local simple content of $\aut(T_d)$ consists of $A_{d-1}$ and $\mathbb{Z}/2\mathbb{Z}$.
Recall, or prove as exercise, that $d-1$ is the minimal number $k$ such that $A_{d-1}$ is a subquotient of $S_k$.
Recall as well, or prove using facts from Section \ref{sec:5-valent}, that $\aut(T_d)$ does not have any compact, normal subgroups.
Now Theorem \ref{thm:localsimplecontent} concludes the proof.
\end{proof}

This argument fails for the $5$-regular tree, because $A_4$ is not simple. In fact $\Aut(T_4)$ and $\Aut(T_5)$ have the same local simple content.
The result also holds for $\Aut(T_5)$, but the proof turned out to be surprisingly tricky and is given in Section~\ref{sec:5-valent}.
The proof above also provides an alternative proof for Part 1 in Corollary~\ref{cor:end stabilizer in tree} for $d \neq 5$.

\begin{remark}
In Theorem \ref{thm:localsimplecontent} we can not replace \lq\lq subquotient\rq\rq\ by \lq\lq subgroup\rq\rq. The reason is simply that a simple subquotient of a finite, simple group might not be isomorphic to a subgroup.
Concrete examples can be found, for example, among sporadic simple groups.
The McLaughlin group $\mathrm{McL}$ is a subquotient of the Conway group $\mathrm{Co_3}$.
In \cite{Mazurov1988} Mazurov gives for each sporadic finite simple group the minimal number of points on which it admits a non-trivial action.
The group $\mathrm{Co_3}$ acts non-trivially on 270 points, but $\mathrm{McL}$ cannot act non-trivially on less than 275 points.
Then, $\mathrm{McL}$ can also not be a subgroup of $\mathrm{Co_3}$.

To turn this into an actual counterexample to Theorem  \ref{thm:localsimplecontent} with \lq\lq subgroups\rq\rq\  instead of \lq\lq subquotients\rq\rq,
Let $N \triangleleft C \leq S_{270}$ satisfy $C/N \cong \mathrm{McL}$.
It suffices to find a graph $\Gamma$ of degree $271$ and a vertex-transitive subgroup $G \leq \aut(\Gamma)$ such that $G_{i-1}/G_i \cong C$ for infinitely many $i$.
Then $\mathrm{McL}$ is contained in the local simple content of $G$, but it is not contained in $S_{270}$.

For a concrete example, we can use Burger--Mozes universal groups $U(F)$ acting on trees.
The interested reader can find the definitions and basic properties in \cite{GarridoGlasnerTornier2018}.
It is not hard to see that the local simple content of $U(F)$ is the set of all composition factors of point stabilizers in $F$.
Taking $F \cong C \times \{1\}$, by the above $T_{271}$ is a $271$-regular Cayley--Abels graph for $U(F)$.
\end{remark}

\subsection{The local prime content and the scale function}

In this section we apply Theorem \ref{thm:localsimplecontent} to other invariants.
  
\begin{definition}{\rm (\cite[Definition 6.1]{Glockner2006})}
  The \emph{local prime content} of a totally disconnected, locally compact group $G$ is the set of all prime numbers $p$ such that every compact, open subgroup $U$ of $G$ contains a compact open subgroup $V \leq U$ with $p \mid \left|U:V\right|$.
\end{definition}

The following lemma gives the connection between the local prime content and the local simple content. If $G$ is not second countable, we do not know whether \ref{thm:JHprofinite}(2) still holds (see \cite{Reid2014}), so in the definition of composition factors, the ``and hence every''-part needs to be left out. This subtlety will however not be relevant for us.

\begin{lemma}
Let $G$ be a totally disconnected, locally compact group.
Then, the local prime content of $G$ contains the set of all prime numbers dividing the order of an element of the local simple content.

Equality holds if, for every compact open subgroup $U \leq G$, every prime number in the local prime content divides the order of at most finitely many composition factors of $U$ (without counting multiplicities).
In particular, this is true if $G$ acts faithfully on a Cayley--Abels graph.
\end{lemma}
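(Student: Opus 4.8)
The plan is to handle the three assertions in turn, relying on Lemma~\ref{lem:local simple content equivalences} together with the fact, recorded before Theorem~\ref{thm:localsimplecontent}, that the terms of a composition series of a profinite group are open and hence of finite index. For the containment, I would take a prime $p$ dividing $|A|$ for some $A$ in the local simple content and an arbitrary compact open subgroup $U$. By definition $A$ is a composition factor of $U$, so some composition series $U = U_0 \triangleright U_1 \triangleright \cdots$ has $U_{i-1}/U_i \cong A$. Setting $V = U_i$, which is open and therefore compact open, the index $|U:V| = \prod_{j=1}^i |U_{j-1}:U_j|$ is divisible by $|A|$ and hence by $p$. Since $U$ was arbitrary, $p$ lies in the local prime content.

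For the equality, the finiteness hypothesis leaves me only the reverse inclusion to prove, so let $p$ be in the local prime content and fix a compact open $U$ with a composition series $U = U_0 \triangleright U_1 \triangleright \cdots$. The key step is to show that $p$ divides $|U_{i-1}/U_i|$ for infinitely many $i$ (counting multiplicity). If not, choose $N$ beyond which no factor is divisible by $p$; applying the definition of the local prime content to the compact open subgroup $U_N$ gives a compact open $V \leq U_N$ with $p \mid |U_N:V|$, and since the $U_i$ form a neighbourhood basis of the identity, $U_M \leq V$ for some $M > N$. Then $p \mid |U_N:V| \mid |U_N:U_M| = \prod_{N < i \le M}|U_{i-1}/U_i|$, contradicting that each of these factors is coprime to $p$. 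Now the finiteness hypothesis guarantees that only finitely many isomorphism types of composition factor of $U$ have order divisible by $p$; combined with the infinitude just established, the pigeonhole principle forces one such factor $A$ to occur with infinite multiplicity. By Lemma~\ref{lem:local simple content equivalences}, $A$ belongs to the local simple content, and $p \mid |A|$, which is the reverse inclusion.

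For the final clause it suffices to verify the finiteness hypothesis, and I would prove the stronger statement that, when $G$ acts faithfully on a Cayley--Abels graph $\Gamma$ of degree $d$, every compact open subgroup $U$ has only finitely many isomorphism types of composition factor. Fixing a vertex $\alpha$, the stabiliser $U_\alpha = U \cap G_\alpha$ has finite index $|\alpha U|$ in $U$ (the orbits of the compact group $U$ are finite), so its normal core $U'' = \bigcap_{u\in U} u U_\alpha u^{-1}$, namely the pointwise stabiliser in $U$ of the finite orbit $\alpha U$, is a compact open normal subgroup of finite index fixing $\alpha$. The finite quotient $U/U''$ contributes only finitely many composition factors, while the composition factors of $U''$ can be read off the construction preceding Theorem~\ref{thm:localsimplecontent} applied to $U''$: building $\Gamma_0 = \{\alpha\} \subseteq \Gamma_1 \subseteq \cdots$ exhausting $\Gamma$ and putting $U''_i = U'' \cap G_{(\Gamma_i)}$, each quotient $U''_{i-1}/U''_i$ embeds into $S_{d-1}$ while $\bigcap_i U''_i = U'' \cap K = \{1\}$ by faithfulness, so every composition factor of $U''$ is a subquotient of $S_{d-1}$. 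Both contributions involve finitely many isomorphism types, and the profinite Jordan--H\"older theorem (Theorem~\ref{thm:JHprofinite}) lets me assemble them into the composition factors of $U$; a fortiori, for any fixed prime only finitely many of them have order divisible by that prime.

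The step I expect to be the main obstacle is this last clause: the construction before Theorem~\ref{thm:localsimplecontent} is phrased for a vertex stabiliser $G_\alpha$, whereas the finiteness hypothesis quantifies over all compact open subgroups, which need not fix any vertex. Passing to the finite-index normal core $U''$ that does fix a vertex, and separately accounting for the discarded finite quotient $U/U''$, is what makes the reduction to the known vertex-stabiliser argument legitimate; everything else is bookkeeping with composition series and their neighbourhood-basis property.
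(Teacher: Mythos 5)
Your proof is correct, and for the crucial reverse inclusion it takes a genuinely different route from the paper. The paper starts from the definition of the local prime content and builds a descending chain $U = V_0 \geq V_1 \geq \cdots$ of compact open subgroups with $p$ dividing each index, replaces each $V_i$ by its normal core in $U$, and refines the resulting subnormal series (via Theorem~\ref{thm:JHprofinite}(1)) into a composition series in which $p$ divides infinitely many factors. You instead fix an \emph{arbitrary} composition series $U = U_0 \triangleright U_1 \triangleright \cdots$ and show by contradiction that $p$ already divides infinitely many of its factors, using the neighbourhood-basis property of the $U_i$ to trap a witness $V \leq U_N$ between two terms of the series. Your version is cleaner: it needs no auxiliary chain, no normal cores, no refinement, and it sidesteps a point the paper glosses over, namely arranging $\bigcap_i V_i = \{1\}$ for the constructed chain (which requires shrinking the $V_i$ along a neighbourhood basis). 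Both arguments then conclude identically by pigeonhole plus Lemma~\ref{lem:local simple content equivalences} ((2) implies (1)). You also supply a full proof of the final clause, which the paper's proof leaves entirely implicit: your reduction of an arbitrary compact open $U$ to the vertex-fixing normal core $U'' = U_{(\alpha U)}$, with $U/U''$ finite and the composition factors of $U''$ read off the exhaustion construction preceding Theorem~\ref{thm:localsimplecontent}, is exactly the missing bookkeeping and is sound. Two cosmetic remarks: the first quotient $U''/U''_1$ embeds in $S_d$ rather than $S_{d-1}$, since $U''$ fixes $\alpha$ but may permute all $d$ of its neighbours --- harmless, as subquotients of $S_d$ still form finitely many isomorphism types; and, like the paper, your appeal to the implication (2) $\Rightarrow$ (1) of Lemma~\ref{lem:local simple content equivalences} tacitly invokes second countability, a caveat the paper flags in the surrounding discussion and which is automatic in the Cayley--Abels setting.
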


\begin{proof}
Assume that $p$ divides the order of a finite, simple group $A$ that is in the local simple content of $G$.
Let $U$ be a compact, open subgroup. Let $U = G_0 \triangleright G_1 \triangleright \dots$ be a composition series of $U$ and let $n \geq 1$ be such that $G_n/G_{n-1} = A$. Then $p$ divides the index $|U:G_{n-1}|$.

For the other direction, 
let $p$ be in the local prime content.
Let $U \leq G$ be a compact, open subgroup.
By definition, there exist compact, open subgroups $U = V_0 \geq V_1 \geq V_2 \geq \dots$ such that $p$ divides the index $|V_{i+1}:V_i|$ for all $i \geq 1$.
By replacing $V_i$ by $\bigcap_{g \in U} g^{-1} V_i g$ we can assume that $V_i$ is normal in $U$; note that above intersection is a finite intersection because $V_i$ has finite index in $U$, and $\bigcap_{i \in \NN} V_i = \{1\}$. By Theorem \ref{thm:JHprofinite}(1) this subnormal series can be refined to a composition series. The prime number $p$ divides infinitely many composition factors, so by assumption, these infinitely many composition factors fall into only finitely many isomorphism classes of finite simple groups. By Lemma \ref{lem:local simple content equivalences} one of them has to be in the local simple content (note that (1) implies (3) implies (2) is also true for groups that are not second countable). 

\end{proof}

\begin{example}
The group $\prod_{n \geq 1} A_n$, where $A_n$ denotes the alternating group on $n$ symbols, is compact and second countable. Its local simple content is empty, because each $A_n$ is a composition factor with multiplicity $1$, but its local prime content is the set of all primes.
\end{example}

This lemma allows us to draw the following conclusion from Theorem~\ref{thm:localsimplecontent}. 

\begin{corollary}[\cite{CapraceReidWillis2017a}, Proposition 4.6]\label{cor:localprimecontent}
Let $p$ be a prime number that is in the local prime content of $G/K$ for every compact, normal subgroup $K \triangleleft G$.
Then, $\md(G) \geq p+1$.
\end{corollary}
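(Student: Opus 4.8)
The plan is to argue on the degree of an \emph{arbitrary} Cayley--Abels graph and then chain together the two results immediately preceding the statement: Theorem~\ref{thm:localsimplecontent}, which constrains the local simple content in terms of the degree, and the lemma identifying the local prime content with the set of primes dividing the order of some element of the local simple content. Concretely, I would fix a Cayley--Abels graph $\Gamma$ for $G$ of degree $d$ and let $K$ be the kernel of the action of $G$ on $\Gamma$. The first observation is that $K$ is a compact, normal subgroup: it is normal as the kernel of a homomorphism, and since it fixes every vertex it is contained in each (compact, open) vertex stabiliser, hence is a closed subgroup of a compact group. This is exactly the sort of subgroup over which the hypothesis quantifies, so $p$ lies in the local prime content of $G/K$.

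Next I would pass to the faithful quotient. Since $K$ is compact and normal and the stabilisers $G_\alpha$ are compact open, the induced group $G/K = G^\Gamma$ still acts vertex-transitively on $\Gamma$ with compact open stabilisers $G_\alpha/K$; thus $\Gamma$ is a Cayley--Abels graph for $G/K$ and, crucially, this action is faithful. Faithfulness is precisely what is needed to invoke the equality case of the preceding lemma: for a group acting faithfully on a Cayley--Abels graph, the local prime content coincides with the set of primes dividing the order of some element of the local simple content. Applying this to $G/K$, and using that $p$ belongs to its local prime content, I obtain a finite simple group $A$ in the local simple content of $G/K$ with $p \mid |A|$.

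Finally I would feed $A$ into Theorem~\ref{thm:localsimplecontent}, applied to $\Gamma$ (whose associated kernel is exactly $K$): the theorem asserts that $A$ is a subquotient of a point stabiliser in the local action $L \leq S_d$, and in particular a subquotient of $S_{d-1}$. Since the order of any subquotient of $S_{d-1}$ divides $(d-1)!$, we get $p \mid (d-1)!$, and because $p$ is prime this forces $p \leq d-1$, i.e.\ $d \geq p+1$. As $\Gamma$ was arbitrary, $\md(G) \geq p+1$. I do not anticipate a serious obstacle; the only points demanding care are the bookkeeping ensuring that the kernel $K$ of the chosen graph is exactly the compact, normal subgroup to which the hypothesis is applied, the verification that the faithful action of $G/K$ legitimately triggers the equality in the lemma, and the elementary number-theoretic step that a prime dividing $(d-1)!$ is at most $d-1$.
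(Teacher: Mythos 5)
Your proposal is correct and follows essentially the same route the paper intends: the paper gives no written proof beyond the remark that the corollary follows from the preceding lemma together with Theorem~\ref{thm:localsimplecontent}, and your chain --- take the kernel $K$ of the action on an arbitrary Cayley--Abels graph of degree $d$ as the compact, normal subgroup, use faithfulness of the $G/K$-action to invoke the equality case of the lemma, obtain a simple group $A$ in the local simple content with $p \mid |A|$, and apply Theorem~\ref{thm:localsimplecontent} to make $A$ a subquotient of $S_{d-1}$, forcing $p \leq d-1$ --- is precisely that argument spelled out. All the points you flag as needing care (compactness and normality of $K$, that $\Gamma$ is a faithful Cayley--Abels graph for $G/K$, and the step $p \mid (d-1)! \Rightarrow p \leq d-1$) check out as you state them.
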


The connection between the scale function and the local prime content is given in the following lemma by Gl\"ockner.

\begin{lemma}{\rm (\cite[Proposition 6.2]{Glockner2006})}
\label{lem:prime factors of scale contained in lpc}
Let $G$ be a \tdlc group. Let $g \in G$ and let $p$ be a prime number dividing $s(g)$. Then $p$ is contained in the local prime content of $G$.
\end{lemma}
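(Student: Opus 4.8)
The plan is to unwind the definition of the local prime content and reduce the statement to a single, arbitrary compact open subgroup. Recall that $p$ lies in the local prime content of $G$ precisely when every compact, open subgroup $U \le G$ contains an open subgroup $V \le U$ with $p \mid [U:V]$. So I would fix an arbitrary compact open $U$; the entire task is then to produce one such $V$. The hypothesis to feed in is that $p \mid s(g)$, where $s(g) = \min\{[U' : U' \cap g^{-1}U'g] : U' \text{ compact open}\}$ as in Definition \ref{def:scale}.

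First I would pass from the given $U$ to a subgroup on which the scale is visibly realised as an honest index. For $n \ge 0$ set $U_n = \bigcap_{k=0}^{n} g^{-k}Ug^k$, a descending chain of compact open subgroups with $U_0 = U$ and $U_{n+1} = U_n \cap g^{-1}U_n g$. By the very definition of the scale, $[U_n : U_{n+1}] = [U_n : U_n \cap g^{-1}U_n g] \ge s(g)$ for every $n$. The key input from Willis's theory of tidy subgroups (see \cite{Willis1994}; compare Proposition \ref{prop:scale} and \cite{Moller2002}) is that this sequence of indices is non-increasing and stabilises at $s(g)$: for all sufficiently large $n$ the subgroup $U_n$ is tidy (above) for $g$, and then $[U_n : U_{n+1}] = s(g)$. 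I would state and use exactly this stabilisation.

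Granting that, the conclusion is immediate. Choose $n$ large enough that $[U_n : U_{n+1}] = s(g)$, and put $V = U_{n+1}$. Since $U_{n+1} \le U_n \le \dots \le U_0 = U$, the subgroup $V$ is an open subgroup of $U$, and
\[
[U : V] = [U : U_n]\,[U_n : U_{n+1}] = [U : U_n]\, s(g),
\]
which is divisible by $p$ because $p \mid s(g)$. Thus $U$ contains an open subgroup of index divisible by $p$. As $U$ was an arbitrary compact open subgroup, $p$ belongs to the local prime content of $G$.

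The main obstacle is precisely the structural import highlighted above: that the indices $[U_n : U_{n+1}]$ actually \emph{reach} the value $s(g)$ for some (indeed all large) $n$, rather than merely tending to it. The bare asymptotic statement $s(g) = \lim_n [U : U\cap g^{-n}Ug^n]^{1/n}$ of Proposition \ref{prop:scale} is not enough on its own: a sequence of indices all coprime to $p$ can have a power of $p$ creep into the limit, and correspondingly one really can have compact open subgroups that are pro-$p'$ inside the same group (for instance in $(\ZZ/p)\times\QQ_q$ with $q\neq p$, where the scale is trivial). What rescues the argument is that $s(g)$ is genuinely attained as a single subquotient index $[U_n:U_{n+1}]$ once $U_n$ is tidy for $g$, and that such a tidy subgroup can be located \emph{inside} the prescribed $U$. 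This realisation of the scale as an honest index contained in $U$ is where all the real work sits, and it is supplied by the tidying procedure; everything else is the elementary multiplicativity of indices used above.
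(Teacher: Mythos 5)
First, a point of comparison: the paper does not prove this lemma at all --- it is quoted from Gl\"ockner \cite[Proposition 6.2]{Glockner2006} --- so your attempt can only be measured against the cited literature, and measured that way it has a genuine gap, sitting exactly where you located ``all the real work''. Your reduction to a fixed compact open $U$ and the monotonicity of $a_n = [U_n : U_{n+1}]$ are fine (each $a_n \geq s(g)$, and $a_{n+1} \leq a_n$ since $gU_{n+1}g^{-1} \leq U_n$, so the integer sequence stabilizes). What is false is the claimed stabilization \emph{at} $s(g)$. Step 1 of Willis's tidying procedure shows the iterated intersections $U_n$ are eventually tidy \emph{above}; but $[V : V \cap g^{-1}Vg] = s(g)$ requires $V$ to be tidy above \emph{and below}, and the $U_n$ need never become tidy below --- that is precisely why the procedure has Steps 2 and 3, and those steps enlarge the subgroup and can leave $U$, so your closing assertion that a tidy subgroup ``can be located inside the prescribed $U$'' is also false in general. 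Concretely: let $F$ be a nontrivial finite group, $G = F^{\ZZ} \rtimes \ZZ$ with $\ZZ$ acting by the shift $g$, and $U = \{f \in F^{\ZZ} : f(0) = 1\}$. Since $g$ normalizes the compact open subgroup $F^{\ZZ}$, we have $s(g) = 1$; yet $U_n = \{f : f(k) = 1 \text{ for } 0 \leq k \leq n\}$, so $[U_n : U_{n+1}] = |F|$ for every $n$ and your sequence stabilizes at $|F| > s(g)$; moreover no subgroup tidy for $g$ (here: no $g$-normalized compact open subgroup) is contained in $U$. This example makes the lemma vacuous ($s(g)=1$), but it refutes the mechanism: the eventual value of $[U_n : U_{n+1}]$ records the scale \emph{times} a persistent tidy-below defect of $U$, and nothing in your argument shows that $p$ divides $[U : U_{n+1}]$.

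The lemma can be proved with the ingredients you already have, but the tidy subgroup must be used at a distance rather than inside $U$. Let $W$ be tidy for $g$ anywhere in $G$; then $[W : W \cap g^{-n}Wg^n] = s(g)^n$ for all $n$ (this is the characterization of tidiness the paper itself invokes via \cite[Corollary 3.5]{Moller2002}). Given an arbitrary compact open $U$, set $m = [W : U \cap W] < \infty$ and $X_n = U \cap W \cap g^{-n}Wg^n$, a compact open subgroup of $U$. Computing $[W : X_n]$ along the two chains $W \geq U \cap W \geq X_n$ and $W \geq W \cap g^{-n}Wg^n \geq X_n$ gives
\[
m\,[U \cap W : X_n] \;=\; s(g)^n\,[\,W \cap g^{-n}Wg^n : X_n\,],
\]
so the $p$-adic valuation of $[U \cap W : X_n]$ is at least $n - v_p(m)$, which is positive once $n > v_p(m)$ because $p \mid s(g)$. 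Hence $[U : X_n] = [U : U \cap W]\,[U \cap W : X_n]$ is divisible by $p$, and since $U$ was arbitrary, $p$ lies in the local prime content. Note that this argument uses the realization of the scale as an honest index only where it is actually available --- at one tidy subgroup somewhere in $G$, compared with $U$ purely through index bookkeeping --- not inside the prescribed $U$, where it may simply not exist.
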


\begin{lemma}\label{lem:scale_quotient}
Let $G$ be a totally disconnected, locally compact group and $K$ a compact, normal subgroup of $G$.  Denote the scale function on $G$ with $s$ and the scale function on $G/K$ with $s_{G/K}$.
If $g \in G$, then $s_{G/K}(gK)=s(g)$.
\end{lemma}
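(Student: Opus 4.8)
The plan is to reduce the minimum defining $s_{G/K}(gK)$ to a minimum over compact open subgroups of $G$ that contain $K$, and then to match these indices with the corresponding indices in $G$. Write $\pi\colon G\to G/K$ for the quotient homomorphism. Since $K$ is compact and normal, $G/K$ is again totally disconnected and locally compact and $\pi$ is continuous and open; thus $\pi$ sets up a bijection between the compact open subgroups of $G/K$ and the compact open subgroups $U$ of $G$ with $K\le U$, via $\bar U=U/K$ and $U=\pi^{-1}(\bar U)$.

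First I would compute, for a compact open subgroup $U\le G$ with $K\le U$, the index occurring in $G/K$. Because $K$ is normal we have $g^{-1}Kg=K$, hence $g^{-1}Ug\supseteq K$ and therefore $(gK)^{-1}(U/K)(gK)=(g^{-1}Ug)/K$. Since $U$ and $g^{-1}Ug$ both contain $K$, the identity $(A/K)\cap(B/K)=(A\cap B)/K$ gives $(U/K)\cap (g^{-1}Ug)/K=(U\cap g^{-1}Ug)/K$, and the third isomorphism theorem yields
\[
|U/K:(U/K)\cap(gK)^{-1}(U/K)(gK)|=|U:U\cap g^{-1}Ug|.
\]
Taking the minimum over all compact open $\bar U=U/K$ (equivalently, over all compact open $U\supseteq K$) and using Definition \ref{def:scale} for $s$, this immediately gives $s_{G/K}(gK)\ge s(g)$, since we are minimizing the same quantity over a subfamily of all compact open subgroups of $G$.

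For the reverse inequality I would show that passing to the quotient cannot increase the index, so that an arbitrary compact open $U\le G$ can be used. Given any compact open $U\le G$, its image $\bar U=\pi(U)$ is a compact open subgroup of $G/K$. The restriction $\pi|_U\colon U\to\bar U$ is surjective and carries $U\cap g^{-1}Ug$ into $\bar U\cap (gK)^{-1}\bar U(gK)$, so it induces a surjection of right coset spaces
\[
U/(U\cap g^{-1}Ug)\twoheadrightarrow \bar U/\bigl(\bar U\cap (gK)^{-1}\bar U(gK)\bigr).
\]
Comparing cardinalities and invoking the definition of $s_{G/K}$ gives $|U:U\cap g^{-1}Ug|\ge s_{G/K}(gK)$; minimizing over $U$ yields $s(g)\ge s_{G/K}(gK)$.

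Combining the two inequalities proves $s_{G/K}(gK)=s(g)$. The only point requiring care is the reverse inequality: one must check that the relevant index can only drop under $\pi$, which is exactly the coset-space surjection above. The remaining steps are the standard subgroup correspondence and the index bookkeeping for quotients by a normal subgroup, both of which are routine.
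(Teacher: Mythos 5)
Your proof is correct, and it takes a genuinely different route from the paper's. You work directly from Definition \ref{def:scale}: the subgroup correspondence identifies the minimum defining $s_{G/K}(gK)$ with the minimum of $|U:U\cap g^{-1}Ug|$ over compact open $U\supseteq K$ (your index bookkeeping via the third isomorphism theorem is right, and the preimage of a compact open subgroup of $G/K$ is indeed compact because $K$ is), while your coset-space surjection shows that this restricted minimum cannot exceed the unrestricted one, since $\pi(U)$ is compact open and passing to the image can only collapse cosets. The paper instead invokes its Proposition \ref{prop:scale}, the orbit-growth formula $s(g)=\lim_{n\to\infty}|(\alpha g^n)G_\alpha|^{1/n}$: it takes a transitive $G$-set $\Omega$ with compact open stabilizers, passes to $\Omega/K$ (using that the compact group $K$ has finite orbits), observes that the $G$-action on $\Omega/K$ factors through $G/K$ with $(\alpha(gK)^n)(G/K)_\alpha=(\alpha g^n)G_\alpha$, and applies the limit formula to both groups acting on the same set. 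Your argument is more elementary and self-contained --- it needs neither the permutation topology nor the limit formula --- and it yields a small bonus the paper's proof does not make visible: since your restricted minimum is attained (indices are positive integers), every $g$ admits a tidy subgroup containing $K$, and $U/K$ is tidy for $gK$ exactly when such a $U\supseteq K$ is tidy for $g$. The paper's proof, in turn, is a two-line consequence of machinery it has already built and fits its graph-dynamical viewpoint.
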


\begin{proof}
Suppose $\Omega$ is a set on which $G$ acts transitively such that the stabilizers of points are compact open subgroups of $G$, for example $\Omega = G/U$ for some compact, open subgroup $U \leq G$.  Since $K$ is compact in the given topology on $G$ it is also compact in the permutation topology on $G$ constructed from the action on $\Omega$ and thus $K$ has finite orbits on $\Omega$, see discussion after Lemma \ref{lem:compact-cocompact}.  Since $K$ is normal, $G$ has an action on $\Omega/K$ and the stabilizers in $G$ of points in $\Omega/K$ are compact open subgroups of $G$.  Let $\alpha$ be a point in $\Omega/K$. 
Note that the homomorphism $G \to \Sym(\Omega/K)$ factors through $G/K$, thus $(\alpha (gK))(G/K)_\alpha = (\alpha g)G_\alpha$ for every $g \in G$.
Now, by applying  Proposition~\ref{prop:scale} to the action of $G/K$ on $\Omega/K$ and to the action of $G$ on $\Omega/K$, we get
$$s_{G/K}(gK)=\lim_{n\to\infty} |(\alpha (gK)^n)(G/K)_\alpha|^{1/n}
=\lim_{n\to\infty} |(\alpha g^n)G_\alpha|^{1/n}=s(g).$$
\end{proof}

  
We are now in a position to derive a lower bound for the minimal degree in terms of the scale function.

\begin{corollary}\label{cor:prime factor of scale function md}
Let $G$ be a \cgtdlc group.  If $p$ is the largest prime that occurs as a factor of any of the values $s(g)$ for $g\in G$, then $\md(G)\geq p+1$. 
\end{corollary}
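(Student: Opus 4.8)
The plan is to show that the prime $p$ lies in the local prime content of \emph{every} quotient $G/K$ by a compact, normal subgroup $K \trianglelefteq G$, and then to invoke Corollary~\ref{cor:localprimecontent}, whose conclusion is exactly $\md(G)\geq p+1$.

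First I would fix an element $g \in G$ that realizes the prime, that is, with $p \mid s(g)$; such a $g$ exists by the very definition of $p$ as the largest prime occurring as a factor of some value of the scale function. Next, let $K \trianglelefteq G$ be an arbitrary compact, normal subgroup. Since $K$ is compact and normal and $G$ is totally disconnected, locally compact, the quotient $G/K$ is again totally disconnected, locally compact, so its scale function $s_{G/K}$ is defined. By Lemma~\ref{lem:scale_quotient} we have $s_{G/K}(gK)=s(g)$, so $p$ divides $s_{G/K}(gK)$. Applying Gl\"ockner's Lemma~\ref{lem:prime factors of scale contained in lpc} to the group $G/K$ and the element $gK$, I conclude that $p$ lies in the local prime content of $G/K$.

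Since $K$ was arbitrary, $p$ belongs to the local prime content of $G/K$ for every compact, normal subgroup $K \trianglelefteq G$. This is precisely the hypothesis of Corollary~\ref{cor:localprimecontent}, which then yields $\md(G) \geq p+1$ and completes the argument.

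I do not expect a serious obstacle here: the proof is a direct chaining of three earlier results. The only point that requires a moment's care is the uniformity over all compact, normal $K$ --- Corollary~\ref{cor:localprimecontent} demands that the prime lie in the local prime content of every such quotient, not merely of $G$ itself --- and this uniformity is supplied by the quotient-invariance of the scale in Lemma~\ref{lem:scale_quotient}, which guarantees that the single witness $g$ continues to witness $p$ after passing to any quotient by a compact, normal subgroup.
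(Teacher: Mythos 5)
Your proof is correct and follows essentially the same route as the paper: both pass to an arbitrary quotient $G/K$ by a compact, normal subgroup, use Lemma~\ref{lem:scale_quotient} to transfer the scale value, apply Lemma~\ref{lem:prime factors of scale contained in lpc} to place $p$ in the local prime content of $G/K$, and conclude via Corollary~\ref{cor:localprimecontent}. Your explicit remark that a single witness $g$ survives every quotient is a slightly more careful phrasing of the same argument.
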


\begin{proof}
Let $K$ be a compact normal subgroup of $G$.
By Lemma \ref{lem:scale_quotient}, the prime number $p$ is also the largest prime factor of any of the values in the image of the scale function of $G/K$.
Lemma \ref{lem:prime factors of scale contained in lpc} says that the prime $p$ is contained in the local prime content of $G/K$. The result now follows from Corollary \ref{cor:localprimecontent}.
\end{proof}

The following is a simple application of the above results.

\begin{corollary}\label{cor:p adic Lie group md}
Let $G$ be a compactly generated $p$-adic Lie-group.  If $G$ is not nearly discrete, then $\md(G)\geq p+1$.
\end{corollary}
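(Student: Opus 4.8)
The plan is to deduce the bound from the scale-function criterion of Corollary \ref{cor:prime factor of scale function md}, which asserts that $\md(G)\geq p+1$ as soon as $p$ is the largest prime dividing some value of the scale function. Two facts about $p$-adic Lie groups carry the argument: that every value of the scale function is a power of $p$, and that such a group is uniscalar precisely when it is nearly discrete. The hypothesis that $G$ is not nearly discrete is exactly what feeds the second of these.

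First I would establish that $s(g)$ is a power of $p$ for every $g\in G$, and this can be done from results already in the paper. Every $p$-adic Lie group has a compact open subgroup that is pro-$p$ (Lazard), and in a pro-$p$ group every open subgroup has index a power of $p$; hence, testing the defining condition of the local prime content against this pro-$p$ subgroup, no prime other than $p$ can lie in the local prime content of $G$. By Lemma \ref{lem:prime factors of scale contained in lpc}, every prime factor of every $s(g)$ lies in the local prime content, so indeed each $s(g)$ is a power of $p$. Consequently $p$ is the only prime that can ever occur as a factor of a scale value.

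Next, since $G$ is assumed not nearly discrete, I would invoke the fact that a $p$-adic Lie group is uniscalar if and only if it has a compact open normal subgroup; this yields an element $g\in G$ with $s(g)>1$, and combined with the previous paragraph $s(g)=p^{k}$ for some $k\geq 1$. Thus $p$ genuinely occurs as, and is therefore the largest, prime factor of a scale value, and Corollary \ref{cor:prime factor of scale function md} delivers $\md(G)\geq p+1$.

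The main obstacle is the equivalence of uniscalarity and near discreteness for $p$-adic Lie groups: this is the one genuinely $p$-adic-analytic input — it fails for general totally disconnected groups, as the Bhattacharjee--Macpherson example in Section \ref{sec:degree2or3} shows — and it is exactly the step converting the hypothesis ``not nearly discrete'' into the usable ``not uniscalar''; I would cite it from the $p$-adic scale-function literature. Should one prefer to avoid this theorem, the result can be obtained instead directly from Corollary \ref{cor:localprimecontent}: for every compact normal $K\trianglelefteq G$ the quotient $G/K$ is again a $p$-adic Lie group, and it is non-discrete (else $K$ would be compact, open and normal in $G$, contradicting the hypothesis), so by the pro-$p$ argument above $p$ lies in the local prime content of $G/K$, and Corollary \ref{cor:localprimecontent} applies verbatim.
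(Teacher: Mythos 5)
Your main argument is essentially the paper's own proof: the paper also converts the hypothesis \lq\lq not nearly discrete\rq\rq\ into \lq\lq not uniscalar\rq\rq\ via Gl\"ockner--Willis \cite[Theorem 5.2]{GlocknerWillis2001}, invokes the fact that every value of the scale function is a power of $p$ (there cited directly as \cite[Theorem 2.1]{GlocknerWillis2001}), and concludes with Corollary \ref{cor:prime factor of scale function md}. The only difference in your main route is cosmetic but pleasant: instead of citing Gl\"ockner for the $p$-power fact, you re-derive it internally from Lazard's theorem (a pro-$p$ compact open subgroup forces the local prime content into $\{p\}$) together with Lemma \ref{lem:prime factors of scale contained in lpc}; this is correct and makes one of the two $p$-adic inputs self-contained relative to the paper's machinery.

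Your appended alternative via Corollary \ref{cor:localprimecontent} is a genuinely different route, and a reasonable one: it bypasses the scale function and, more importantly, the uniscalar/nearly-discrete equivalence, which you rightly identify as the most specialized ingredient. Two small points there. First, \lq\lq the pro-$p$ argument above\rq\rq\ only shows the local prime content of $G/K$ is \emph{contained in} $\{p\}$; for Corollary \ref{cor:localprimecontent} you need $p$ to actually \emph{belong} to it, which requires the extra (easy) observation that $G/K$, being non-discrete, has infinite pro-$p$ compact open subgroups, so every compact open subgroup of $G/K$ contains proper open subgroups of index divisible by $p$. Second, this route silently uses that the quotient of a $p$-adic Lie group by a compact (closed) normal subgroup is again a $p$-adic Lie group --- true and standard, but still an external $p$-adic-analytic fact, so the alternative does not fully escape the literature; what it buys is independence from the Gl\"ockner--Willis equivalence specifically. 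Both routes are sound.
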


\begin{proof}  Gl\"ockner and Willis have shown in \cite[Theorem 5.2]{GlocknerWillis2001} that for a $p$-adic Lie group the condition that it is not nearly discrete is equivalent to the condtion that the group is not uniscalar. 
From the work of Gl\"ockner, see \cite[Theorem 2.1]{GlocknerWillis2001}, we know that every value of the scale function is a non-negative power of $p$.  The result now follows from the last corollary.  
\end{proof}


\section{The automorphism group of a $5$-regular tree}\label{sec:5-valent}

In Section~\ref{sec:composition},  Corollary \ref{cor:regular tree}, it is shown that if $d\geq 2$ and $d\neq 5$ then  the minimal degree of a Cayley--Abels graph for the automorphism group of the $d$-regular tree is $d$. The $1$-regular tree consists only of a single edge, so its automorphism group is compact and $\md(\aut(T_1))=0$.  What is left is to determine the minimal degree of $\aut(T_5)$.

\begin{theorem}\label{T5-regular}
Let $T_5$ be the $5$-regular tree. Then $\md(\aut(T_5)) = 5$.
In particular, $T_5$ is a Cayley--Abels graph of minimal degree for $\aut(T_5)$.
\end{theorem}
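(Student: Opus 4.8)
I would first record the easy half and the cheap lower bound. Since $T_5$ is itself a Cayley--Abels graph for $\aut(T_5)$, we have $\md(\aut(T_5)) \le 5$. The local simple content of $\aut(T_5)$ is $\{\ZZ/2\ZZ,\ZZ/3\ZZ\}$: a vertex stabiliser acts on the first sphere as $S_5$, which contributes the factor $A_5$ with multiplicity one only, whereas every deeper level contributes copies of $S_4$, whose composition factors are $\ZZ/2\ZZ$ and $\ZZ/3\ZZ$; by Lemma \ref{lem:local simple content equivalences} only the factors of infinite multiplicity survive, so $A_5$ drops out. As the smallest $n$ for which $S_n$ contains both $\ZZ/2\ZZ$ and $\ZZ/3\ZZ$ as subquotients is $n=3$, Theorem \ref{thm:localsimplecontent} gives $\md(\aut(T_5))\ge 4$. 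Everything therefore reduces to excluding degree $4$.

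So I would suppose, for contradiction, that $\Gamma$ is a degree-$4$ Cayley--Abels graph for $G:=\aut(T_5)$. The kernel of the action is a compact normal subgroup of $G$, hence trivial (as recalled in the proof of Corollary \ref{cor:regular tree}), so the action is faithful and the local action $L$ is a genuine subgroup of $S_4$. By Theorem \ref{thm:localsimplecontent} both $\ZZ/2\ZZ$ and $\ZZ/3\ZZ$ must be subquotients of point stabilisers of $L$. A point stabiliser admitting $\ZZ/3\ZZ$ as a subquotient has order divisible by $3$, forcing $L$ to contain a $3$-cycle; scanning the subgroups of $S_4$ of order divisible by $3$ leaves only $\ZZ/3\ZZ$, $S_3$ (fixing a point), $A_4$ and $S_4$. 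The cases $\ZZ/3\ZZ$ and $A_4$ are eliminated because each of their point stabilisers has order a power of $3$ (every point stabiliser of $A_4$ is $\cong \ZZ/3\ZZ$), so $\ZZ/2\ZZ$ is a subquotient of none of them. This is exactly the point where $d=5$ departs from $d=4$, where the transitive local action $S_4$ is both allowed and realised. Thus, up to conjugacy, $L\cong S_4$ (transitive) or $L\cong S_3$ fixing one neighbour.

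I would then exploit the subnormal series built before Theorem \ref{thm:localsimplecontent}: in either surviving case every composition factor of the vertex stabiliser $U:=G_\alpha$ is a subquotient of $L\le S_4$ or of $S_{d-1}=S_3$, hence lies in $\{\ZZ/2\ZZ,\ZZ/3\ZZ\}$. In particular $U$ is prosolvable with no $A_5$ composition factor. Reading $U$ back inside $T_5$, it fixes a vertex or an edge-midpoint (being compact) and has finite index in the corresponding stabiliser (being open). If $U$ fixes a vertex $v$, its image in $\Sym(T_5(v))=S_5$ cannot be transitive, since a transitive subgroup of $S_5$ has order divisible by $5$ and hence an element of order $5$, producing the forbidden factor $\ZZ/5\ZZ$; so $U$ has at least two orbits on the five neighbours of $v$. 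If instead $U$ fixes an edge-midpoint it lies in the setwise stabiliser of an edge of $T_5$.

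The remaining task is to rule out both configurations, and I expect this to be the main obstacle. In the case $L\cong S_3$, the fixed neighbour $\beta$ of $\alpha$ satisfies $G_\alpha\le G_\beta$, whence $G_\alpha=G_\beta=:U$ by vertex-transitivity; any $g$ with $\alpha g=\beta$ then normalises $U$ without lying in it, so $U$ is not self-normalising and the fixed-neighbour relation is a $G$-invariant perfect matching (the same connectivity argument as in Lemma \ref{lem:discrete} controls what $U$ fixes). In the case $L\cong S_4$ the action is arc-transitive with prosolvable vertex stabiliser that is intransitive on $T_5$. In both cases I would use $G=\langle U, g_1,\dots\rangle$, with the $g_i$ realising the at most two edge-orbits, to confront the requirement that the prosolvable, $T_5$-small subgroup $U$ generate all of $\aut(T_5)$ along a connected degree-$4$ graph. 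Matching the rigid internal structure that $\Gamma$ forces on $U$ against the rigid way $U$ must sit inside $T_5$ — a detailed analysis of the compact open subgroups of $\aut(T_5)$ and their normalisers — is where I expect the genuine difficulty, and the reason degree $4$ survives the local-simple-content bound yet is ultimately impossible.
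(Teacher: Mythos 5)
Your setup is correct as far as it goes: the upper bound via $T_5$ itself, the identification of the local simple content as $\{\ZZ/2\ZZ,\ZZ/3\ZZ\}$, the consequent bound $\md(\aut(T_5))\geq 4$, the reduction of the local action on a hypothetical degree-$4$ graph $\Gamma$ to either $S_3$ fixing a neighbour or $S_4$, and the observation that a vertex stabilizer $U=G_\alpha$ of $\Gamma$ is prosolvable with all composition factors in $\{\ZZ/2\ZZ,\ZZ/3\ZZ\}$, hence intransitive on the five neighbours of any vertex of $T_5$ it fixes. All of this matches the opening moves of the paper's argument (its Claim 1 and the surrounding remarks). But at that point you stop: your final paragraph explicitly defers ``the remaining task'' of ruling out both configurations, and that task is not a residual detail --- it is essentially the entire proof. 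Note in particular that prosolvable compact open subgroups that are intransitive on spheres do exist in $\aut(T_5)$, so no purely local analysis of $U$ can yield the contradiction; the difficulty is genuinely global, and nothing in your sketch (``confront the requirement that $U$ generate all of $\aut(T_5)$ along a connected degree-$4$ graph'') indicates how to carry it out.

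Concretely, two missing ingredients. First, you never eliminate the local action $S_4$: the paper does this by citing Trofimov's result that local action $S_4$ forces $\Gamma$ to be a tree, combined with its rigidity lemma (Lemma \ref{lem:uniqueness of trees}) that $T_4$ cannot be a Cayley--Abels graph for $\aut(T_5)$; you invoke neither and propose no substitute, so the $S_4$ branch is simply open. Second, in the $S_3$ branch your correct observations ($G_\alpha=G_\beta$ for the matched neighbour, a $G$-invariant perfect matching, two edge orbits) are exactly where the paper's Claim 2 ends and its real work begins: it shows $\langle g_1,B\rangle$ and $\langle g_4,B\rangle$ are compact (using that neither $G$ nor $\aut^+(T_5)$ admits a degree-$3$ Cayley--Abels graph, via the double-coset identities $BA'B=BA'$), deduces that the products $t_i=g_ig_4$ are hyperbolic on the barycentric subdivision $T$ with axes sharing an arc inside the fixed tree of $B$, and then derives a contradiction from a minimality argument: four sibling vertices must lie in some $\gamma R^{k_0}$ with $\gamma R^kB=\gamma R^k$, and the half-tree decomposition $Ht_1\sqcup Ht_2\sqcup Ht_3$ forces them back into $\gamma R^{k_0-1}$. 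None of this hyperbolicity/axis/half-tree machinery, nor any alternative route to a contradiction, appears in your proposal, so what you have is a correct reduction plus an accurate diagnosis of where the difficulty lies --- not a proof.
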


As a preparation for the proof of this theorem, we recall a few well-known facts about groups acting on trees.
First we remind the reader of the classical fact due to Tits \cite[Proposition~3.2]{Tits1970} that there are three types of automorphisms of a tree.

\begin{proposition}\label{prop:classification_Tits}
If $g$ is an automorphism of a tree $T$, then exactly one of the following is true:
\begin{enumerate}
    \item $g$ fixes some vertex $\alpha$;
    \item $g$ leaves some edge $\{\alpha, \beta\}$ invariant and transposes the vertices $\alpha$ and $\beta$;
    \item there is a $2$-way infinite arc $L$ in $T$ invariant under $g$ and $g$ induces a non-trivial translation on $L$.
\end{enumerate}
\end{proposition}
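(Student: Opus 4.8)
The plan is to control $g$ through its \emph{minimal displacement} $m = \min\{ d_T(\alpha, \alpha g) \mid \alpha \in \V T\}$. Since the quantities $d_T(\alpha,\alpha g)$ are nonnegative integers, this minimum exists. Fix a vertex $\alpha$ realizing it and write the unique geodesic from $\alpha$ to $\alpha g$ as $\alpha = v_0, v_1, \dots, v_m = \alpha g$. The three cases will correspond to $m = 0$, to $m \geq 1$ with "backtracking" at $\alpha g$, and to $m \geq 1$ without backtracking. If $m = 0$ then $\alpha g = \alpha$, giving case (1) immediately.

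The technical core is a no-backtracking lemma. Applying $g$ to the geodesic above gives the geodesic $(v_0 g, \dots, v_m g)$ from $\alpha g$ to $\alpha g^2$; I say $g$ \emph{backtracks} at $\alpha g = v_m$ if $v_1 g = v_{m-1}$, i.e.\ the second geodesic starts by retracing the first. First I would rule this out when $m \geq 2$: backtracking would give $d_T(v_1, v_1 g) = d_T(v_1, v_{m-1}) = m-2 < m$, contradicting minimality of $m$ (here $d_T(v_1,v_{m-1}) = (m-1)-1$ because both vertices lie on a geodesic). When $m = 1$, backtracking means $v_1 g = v_0$, i.e.\ $\alpha g^2 = \alpha$; then $g$ transposes the adjacent vertices $\alpha$ and $v_1$ and leaves the edge $\{\alpha, v_1\}$ invariant, which is exactly case (2).

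In every remaining situation ($m \geq 1$, no backtracking) I would assemble the invariant line. The absence of backtracking propagates across each junction $v_m g^k = v_0 g^{k+1}$: backtracking there would read $v_1 g^{k+1} = v_{m-1} g^k$, and applying $g^{-k}$ returns the forbidden relation $v_1 g = v_{m-1}$. Hence the concatenation of the translated geodesics is again a geodesic, so $d_T(\alpha, \alpha g^n) = n m$ for all $n \geq 0$, and by the same argument applied to $g^{-1}$ (which has the same minimal displacement) for all $n < 0$ as well. Therefore $L = \bigcup_{n \in \ZZ}(v_0 g^n, \dots, v_m g^n)$ is a $2$-way infinite arc, $g$ sends $v_i g^n$ to $v_i g^{n+1}$, and $g$ acts on $L$ as a nontrivial translation by $m \geq 1$, giving case (3).

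Finally I would verify that exactly one case occurs by exhibiting invariants that separate the three types: an elliptic element has $m = 0$; an inversion fixes no vertex (in a tree no vertex is equidistant from two adjacent vertices, so $d_T(\gamma,\alpha) \neq d_T(\gamma,\beta)$ for all $\gamma$) yet satisfies $\alpha g^2 = \alpha$; and a translation satisfies $d_T(\alpha, \alpha g^n) = nm > 0$ for every $n \neq 0$, so no power fixes a vertex. These three behaviors are pairwise incompatible, which yields the "exactly one" clause. I expect the main obstacle to be the no-backtracking lemma together with its inductive propagation to a genuine bi-infinite geodesic: the distance computations are routine, but the minimality argument and the bookkeeping at the junctions are where the real content lies.
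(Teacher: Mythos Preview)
The paper does not prove this proposition at all: it is quoted as a classical fact due to Tits and only a citation to \cite[Proposition~3.2]{Tits1970} is given. Your argument via the minimal displacement $m=\min_\alpha d_T(\alpha,\alpha g)$ is the standard proof (essentially the one in Serre's \emph{Trees}) and is correct.

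One small point in your ``exactly one'' clause: from $d_T(\alpha,\alpha g^n)=nm>0$ you conclude that ``no power fixes a vertex'', but as written this only shows $g^n$ does not fix the particular vertex $\alpha$ on the axis. To rule out (1) and (2) you need that $g^n$ fixes \emph{no} vertex whatsoever. This follows immediately once you observe that $g^n$ is again a translation along the same line $L$ with displacement $|n|m$, so its minimal displacement is $|n|m\geq 1$; alternatively, for an arbitrary vertex $\gamma$ with nearest point $\pi$ on $L$ one has $d_T(\gamma,\gamma g)=2d_T(\gamma,L)+m\geq m$. Either remark closes the gap.
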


For an automorphism $g$ of a tree $T$ we denote by $\V T^g$ the set of vertices that are fixed by $g$. It is easy to see that in Case (1) of Proposition \ref{prop:classification_Tits} the set $\V T^g$ is the vertex set of a subtree of $T$.  

A group $G$ is said to act on on a tree $T$ {\em without inversion} if none of its elements satisfy Case (2).  If we replace $T$ with the first barycentric division of the tree 
(e.g. we replace $T_5$ by the bi-regular tree $T_{5,2}$), we can be sure that we have an action without inversion.  

In Case (3) $L$ is called the {\em translation axis} of $g$.  If $\alpha$ is a vertex in $L$ then the unique $s$-arc
$(\alpha_0, \ldots, \alpha_s)$ with $\alpha_0=\alpha$ and $\alpha_s=\alpha g$ is an $s$-arc in $L$. The integer $s$ is called the \emph{translation length} of $g$.


\begin{lemma}[\cite{Serre2003}, Proposition 26 (incl.\ proof)]
\label{lem:product of elliptic tree automorphisms}
  Let $T$ be a tree and let $g$ and $h$ be automorphisms of $T$.  Suppose $\V T^g\neq \emptyset$ and $\V T^h\neq \emptyset$.
  Then $gh$ fixes a vertex if and only if $\V T^g \cap \V T^h \neq \emptyset$.
  Otherwise $gh$ is a translation and the unique $s$-arc $(\alpha_0,\dots,\alpha_s)$ with $\alpha_0 \in \V T^g$ but $\alpha_1 \notin \V T^g$ and $\alpha_s \in \V T^h$ but $\alpha_{s-1} \notin \V T^h$ is contained in the translation axis of $gh$.
\end{lemma}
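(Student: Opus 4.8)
The plan is to reduce the statement to elementary tree geometry. I would first record two standard facts that I would use freely. For an elliptic automorphism $g$ (one with $\V T^g\neq\emptyset$), the fixed set $\V T^g$ is a convex subtree: if $g$ fixes two vertices it fixes the unique geodesic joining them. And two disjoint convex subtrees of a tree are joined by a unique bridge. The ``if'' direction of the equivalence is immediate, since a common fixed vertex $v$ of $g$ and $h$ gives $v(gh)=v$. So from here on I would assume $\V T^g\cap\V T^h=\emptyset$ and let $(\alpha_0,\dots,\alpha_s)$, $s\geq 1$, be the bridge, with $\alpha_0$ the point of $\V T^g$ nearest $\V T^h$ and $\alpha_s$ the point of $\V T^h$ nearest $\V T^g$; a short convexity argument shows this is exactly the unique $s$-arc satisfying the divergence conditions $\alpha_1\notin\V T^g$, $\alpha_{s-1}\notin\V T^h$ in the statement.

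The technical heart, and the step I expect to be the main obstacle, is a ``reflection'' lemma converting the algebra of fixed points into geometry: for elliptic $g$ and any vertex $v$ with nearest point $v'$ in $\V T^g$, the geodesic $[v,vg]$ passes through $v'$ and has length $2\,d(v,\V T^g)$, leaving $v'$ along an edge different from the one by which $[v',v]$ arrives. I would prove this by noting that $g$ fixes $v'$ and preserves $\V T^g$ (hence preserves nearest-point projection), and that the first edge $u$ of $[v',v]$ satisfies $ug\neq u$ (otherwise $u\in\V T^g$), so $[v,v']$ and $[v',vg]$ use different edges at $v'$ and concatenate without backtracking.

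With this in hand the remaining computation is short. Writing $p=\alpha_0$, I would track the $gh$-orbit of $p$ and show that the bi-infinite concatenation $L=\bigcup_{n}[p(gh)^n,p(gh)^{n+1}]$ is a geodesic line invariant under $gh$. By equivariance it is enough to rule out backtracking at $p$, i.e.\ to check that $[p,p(gh)]$ and $[p,p(gh)^{-1}]$ leave $p$ along different edges. Since $\alpha_0 g=p$, the reflection lemma applied to $h$ (whose nearest point to $p$ is $\alpha_s$) shows $[p,p(gh)]=[p,ph]$ runs through $\alpha_s$ at distance $2s$ and leaves $p$ along the bridge edge $(\alpha_0,\alpha_1)$. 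For the other, $p(gh)^{-1}=(ph^{-1})g^{-1}$ and $pg^{-1}=p$, so $[p,p(gh)^{-1}]=[p,ph^{-1}]g^{-1}$; the reflection lemma for $h^{-1}$ again makes $[p,ph^{-1}]$ start with edge $(\alpha_0,\alpha_1)$, whence $[p,p(gh)^{-1}]$ starts with $(\alpha_0,\alpha_1 g^{-1})$, and $\alpha_1 g^{-1}\neq\alpha_1$ because $\alpha_1\notin\V T^g$. The two edges differ, so there is no backtracking; hence $gh$ is a translation of length $2s$ with axis $L$, and since $[p,p(gh)]\subseteq L$ contains the bridge $(\alpha_0,\dots,\alpha_s)$, the proof is complete. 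This simultaneously establishes the ``only if'' direction, as a translation fixes no vertex.
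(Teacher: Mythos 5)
Your proof is correct, and it is essentially the argument the paper points to: it cites Serre's Proposition 26 \emph{including its proof}, which proceeds exactly as you do, via convexity of the fixed trees $\V T^g$ and $\V T^h$, the bridge between them, the reflection property of the geodesic $[v,vg]$ through the nearest-point projection, and the verification that the concatenated $(gh)$-orbit path has no backtracking and is hence an invariant geodesic line. Your explicit check of the two initial edges $(\alpha_0,\alpha_1)$ and $(\alpha_0,\alpha_1 g^{-1})$ at $p=\alpha_0$, using $\alpha_1\notin\V T^g$, is precisely the key step of that classical proof, so nothing further is needed.
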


The following lemma is a consequence of the simple fact that a compact group acting continuously on a discrete set has finite orbits.

\begin{lemma}\label{lem:compact_acts_on_trees}
  Let $T$ be a tree and $G \leq \aut(T)$ a compact subgroup. Then, there exists a vertex or an edge in $T$ that is stabilized by every element of $G$.
\end{lemma}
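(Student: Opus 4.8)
The plan is to use the principle stated just before the lemma---that a compact group acting continuously on a discrete set has finite orbits---together with the classical notion of the \emph{center} of a finite tree. The whole argument reduces the infinite object $T$ to a canonically attached finite subtree on which $G$ acts by automorphisms, and then locates a distinguished vertex or edge inside it.

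First I would fix an arbitrary vertex $\alpha \in \V T$ and examine its orbit $\alpha G$. Point stabilizers in $\aut(T)$ are open in the permutation topology, so $G_\alpha = G \cap \aut(T)_\alpha$ is an open subgroup of $G$; since $G$ is compact, open subgroups have finite index, whence $[G:G_\alpha]$ is finite and the orbit $\alpha G$ is a finite set of vertices. (Equivalently, the orbit map $g \mapsto \alpha g$ is a continuous map from the compact group $G$ into the discrete set $\V T$, so its image is finite.) Next I would form the smallest subtree $S$ of $T$ containing $\alpha G$, namely the union of all geodesic arcs joining pairs of points of $\alpha G$. Because $\alpha G$ is finite and $T$ is a tree, $S$ is a finite subtree, and it is $G$-invariant: every $g \in G$ permutes $\alpha G$ and preserves distances, hence carries geodesics between orbit points to geodesics between orbit points, so $S g = S$. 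Thus $G$ acts on the finite tree $S$ by automorphisms.

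Finally I would invoke the classical theorem (Jordan) that every finite tree possesses a center consisting of either a single vertex or a single edge; concretely, one obtains it by repeatedly deleting all leaves of $S$ simultaneously until only a vertex or an edge remains. This peeling procedure is intrinsic to $S$, so the center is preserved by every automorphism of $S$, and in particular by every element of $G$. If the center is a vertex, then it is fixed by all of $G$; if it is an edge $\{\alpha_0,\beta_0\}$, then this edge is stabilized setwise by all of $G$. Either way the conclusion of the lemma follows.

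I do not anticipate a serious obstacle here; the two points that require care are merely (i) the openness of $G_\alpha$, which guarantees the finiteness of the orbit and rests on the openness of point stabilizers in the permutation topology recalled earlier, and (ii) the existence and automorphism-invariance of the center of a finite tree, which is entirely standard. The proof is therefore a matter of assembling these two ingredients via the $G$-invariant finite subtree $S$.
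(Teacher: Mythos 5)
Your proof is correct and follows exactly the route the paper intends: the paper states the lemma as a consequence of the fact that a compact group acting continuously on a discrete set has finite orbits, and your argument simply fleshes this out with the standard steps (finite orbit, its convex hull as a finite $G$-invariant subtree, and the leaf-peeling center of a finite tree, which is a vertex or an edge). Both of your flagged points of care, the openness of $G_\alpha$ and the automorphism-invariance of the center, are handled correctly, so nothing is missing.
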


We now present a lemma that can be deduced from various \lq\lq rigidity\rq\rq\ results for automorphism groups of trees, e.g.~\cite[Corollary 4.8(c)]{BassLubotzky1994}, but for completeness we include a direct proof.  

\begin{lemma}\label{lem:uniqueness of trees}
Let $d \neq d'$ be non-negative integers, bigger or equal to 3. Then $T_{d'}$ cannot be a Cayley--Abels graph for $\aut(T_d)$.
\end{lemma}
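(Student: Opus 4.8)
The plan is to turn a hypothetical action of $G=\aut(T_d)$ on $T_{d'}$ into a $G$-equivariant identification of the two vertex sets, and then to observe that $\V T_d$ carries only one $G$-invariant regular tree structure, namely $T_d$ itself; this forces $d'=d$.

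First I would arrange that the action is faithful. The group $G=\aut(T_d)$ has no non-trivial compact normal subgroup: such a subgroup fixes a vertex or an edge of $T_d$ by Lemma~\ref{lem:compact_acts_on_trees}, and then normality together with vertex-transitivity makes it fix every vertex, hence be trivial (this is the fact already invoked in the proof of Corollary~\ref{cor:regular tree}). If $T_{d'}$ were a Cayley--Abels graph for $G$, the kernel of the action would be compact and normal, hence trivial, so $G$ would act faithfully; thus $G$ is a closed, vertex-transitive subgroup of $\aut(T_{d'})$ with compact open vertex stabilisers.

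Next I would build a $G$-equivariant bijection $\phi\colon \V T_d\to \V T_{d'}$. For a vertex $x$ of $T_d$ put $M_x=\aut(T_d)_x$. Since $d\ge 3$, the only vertex of $T_d$ fixed by $M_x$ is $x$, and I would use this to show that $M_x$ is a maximal compact subgroup of $G$: any compact $C\supseteq M_x$ fixes a vertex or edge of $T_d$ lying in $\Fix_{T_d}(C)\subseteq\{x\}$, and the only possibility compatible with $C$ being compact (the group generated by two neighbouring vertex stabilisers is not compact) is $C=M_x$. Viewing $M_x$ as a compact subgroup of $\aut(T_{d'})$, Lemma~\ref{lem:compact_acts_on_trees} gives a fixed vertex or edge in $T_{d'}$; its fixed-point set in $T_{d'}$ can contain neither a ray nor an edge, because in either case maximality would identify $M_x$ with two distinct stabilisers of neighbouring vertices, which is impossible. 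Hence $M_x$ fixes a unique vertex $\phi(x)\in \V T_{d'}$, and maximality yields $M_x=G_{\phi(x)}$. (Should $M_x$ instead invert a $T_{d'}$-edge it would fix its midpoint; this variant is dealt with in the same way after passing to the barycentric subdivision, and I treat it as a technical point.) Equivariance, injectivity and surjectivity of $\phi$ are then formal: conjugation intertwines $M_x$ with $G_{\phi(x)}$, equal stabilisers force equal vertices, and the image is a non-empty $G$-invariant set in the vertex-transitive $G$-set $\V T_{d'}$. In particular $\phi$ is an isomorphism of $G$-sets and $G_{\phi(x)}=\aut(T_d)_x$ for every $x$.

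Finally I would compare the tree structures. Transporting the edge set of $T_{d'}$ through $\phi^{-1}$ equips $\V T_d$ with a second $G$-invariant graph structure which is a $d'$-regular tree. But $G=\aut(T_d)$ is distance-transitive on $T_d$, so its orbits on unordered pairs of vertices are exactly the distance classes; hence every $G$-invariant edge set on $\V T_d$ is a union of ``distance-$k$'' relations. The distance-$1$ relation gives $T_d$, whereas for $k\ge 2$ the distance-$k$ graph of a tree contains cycles, and a union of two or more distance classes is not acyclic either; so no other choice is a tree. Therefore the only $G$-invariant regular tree structure on $\V T_d$ is $T_d$ itself, the transported $d'$-regular tree coincides with $T_d$, and $d'=d$, contradicting $d\neq d'$. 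The crux of the argument is the construction of $\phi$ in the second step---that is, showing $M_x$ is maximal compact and that its fixed-point set in $T_{d'}$ reduces to a single vertex---where Lemma~\ref{lem:compact_acts_on_trees} and the non-compactness of $\langle M_x,M_y\rangle$ for neighbouring $x,y$ carry the weight.
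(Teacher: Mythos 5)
Your proposal follows the paper's strategy quite closely up to the construction of the equivariant bijection (faithfulness via absence of compact normal subgroups; maximality of vertex stabilizers among compact subgroups; Lemma~\ref{lem:compact_acts_on_trees} giving a fixed vertex or invariant edge in $T_{d'}$; maximality upgrading inclusions of stabilizers to equalities), but it has a genuine gap at exactly the point you wave off as ``a technical point'': the case where $M_x$ stabilizes an edge $e'$ of $T_{d'}$ setwise without fixing a vertex, so that maximality gives $M_x=G_{e'}$. Passing to the barycentric subdivision does not let you handle this ``in the same way'': on $T_{d',2}$ the group $G$ is no longer vertex-transitive (midpoints and original vertices lie in different orbits once $d'\neq 2$), and your map $\phi$ would now carry $\V T_d$ bijectively onto a $G$-orbit of \emph{edges} of $T_{d'}$, not onto $\V T_{d'}$. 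Your endgame---transporting the edge set of $T_{d'}$ through $\phi^{-1}$ and classifying $G$-invariant graph structures on $\V T_d$ by distance-transitivity---then has nothing to transport: the natural $G$-invariant relations on an edge orbit need not reconstruct $T_{d'}$ (the orbit could even be a perfect matching, as happens for the red edges in the proof of Theorem~\ref{T5-regular}, in which case the shared-endpoint relation is empty), so no contradiction falls out. The paper devotes its entire first case to killing this configuration, by a counting argument: if $G_\alpha=G_{e'}$, then the stabilizer $G_{\alpha'}$ of an endpoint $\alpha'$ of $e'$ is identified with an edge stabilizer $G_e$ in $T_d$, and then $|G_{e'}:G_{\alpha'}\cap G_{e'}|=|\alpha' G_{e'}|\leq 2$ while $|G_\alpha:G_e\cap G_\alpha|\geq d\geq 3$, a contradiction. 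Some comparison of this kind (an orbit of size at most $2$ in $T_{d'}$ against orbit sizes at least $d$ in $T_d$) is unavoidable, and your proposal does not supply it.

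A second, smaller omission is fixable in three lines: you assert without proof that $\Fix_{T_{d'}}(M_x)$ containing an edge is ``impossible''. It is, but it needs an argument: if $M_x=G_u=G_v$ for adjacent $u,v\in \V T_{d'}$, pick $g\in G$ with $ug=v$ by vertex-transitivity; then $g^{-1}M_xg=G_v=M_x$, so $g$ normalizes $M_x$ and hence preserves $\Fix_{T_d}(M_x)=\{x\}$ (here $d\geq 3$ is used), forcing $g\in M_x=G_u$ and $v=u$, a contradiction. Granting both repairs, your endgame is correct and genuinely different from the paper's: the paper shows directly that $\varphi$ sends neighbours to neighbours by comparing suborbit sizes $|\beta G_\alpha|$ along geodesics and concludes $\varphi$ is a graph isomorphism, whereas you classify all $G$-invariant edge sets on $\V T_d$ as unions of distance classes and rule out every union except the distance-$1$ class by exhibiting cycles (for even $k$, three branches of length $k/2$ at a common vertex give a triangle; for odd $k\geq 3$ a four-cycle exists; both use $d\geq 3$). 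That alternative is arguably cleaner, but it only operates once $\phi$ lands in $\V T_{d'}$, which is precisely what the missing edge case must establish.
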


\begin{proof}   Suppose $G=\aut(T_d)$ acts transitively on $T_{d'}$ and the stabilizers of vertices are compact open subgroups of $G$.  Recall (or prove as an exercise using Lemma \ref{lem:compact_acts_on_trees}) that $G$ does not have any non-trivial, compact, normal subgroups, so this action is faithful.
As a consequence of Lemma~\ref{lem:compact_acts_on_trees}, the automorphism group of an infinite, regular tree has two conjugacy classes of maximal compact subgroups. One conjugacy class consists of the stabilizers of vertices and the other conjugacy class consists of the stabilizers of edges.   A compact open subgroup of $G$ acting on $T_{d'}$ has to fix a vertex or stabilize an edge in $T_{d'}$. 
Thus for every vertex $\alpha \in \V T_d$ we can find a vertex $\alpha' \in \V T_{d'}$ with $G_\alpha \leq G_{\alpha'}$ or an edge $e'$ in $T_{d'}$ with $G_\alpha \leq G_{e'}$.
By maximality of $G_{\alpha}$, such an inclusion has to be an equality.

First we consider the case when $G_\alpha=G_{e'}$ for a vertex $\alpha$ in $T_d$ and an edge $e'$ in $T_{d'}$.
Let $\alpha'$ be an end vertex of $e'$. Similar considerations as in the previous paragraph imply that there exists an edge $e$ of $T_d$ with $G_{\alpha'}=G_e$.
From the orbit-stabilizer theorem we see that $|G_{e'}:G_{\alpha'}\cap G_{e'}| = |\alpha' G_{e'}| \leq 2$.
However $|G_\alpha:G_e \cap G_{\alpha}| \geq d$ and this is a contradiction.

We are left with the other case.
The above allows us to construct a bijective map $\varphi \colon \V T_d\to \V T_{d'}$ such that if $\alpha\in \V T_d$ then $G_{\varphi(\alpha)}=G_\alpha$.
In particular, for all vertices $\alpha,\beta \in \V T_d$ we have $|\beta G_\alpha| = |G_\alpha : G_\alpha \cap G_\beta| = |G_{\varphi(\alpha)} : G_{\varphi(\alpha)} \cap G_{\varphi(\beta)}| = |\varphi(\beta) G_{\varphi(\alpha)}|$.
Also note that, because $T_d$ is a tree, $|\gamma G_\alpha| \geq |\beta G_\alpha|$ whenever $\beta$ lies on the unique $s$-arc from $\gamma$ to $\alpha$; the same statement holds for vertices in $T_{d'}$. 
In particular, fixing $\alpha$ and varying $\gamma \neq \alpha$ in $T_d$, the cardinality $|\gamma G_\alpha|$ is minimal if and only if $\gamma$ is a neighbour of $\alpha$.
In $T_{d'}$ we can only say that amongst those vertices $\gamma' \neq \alpha'$ with $|\gamma' G_{\alpha'}|$ minimal there exists a neighbour of $\alpha'$.
These considerations imply that for every vertex $\alpha \in \V T_d$ the map $\varphi$ sends some neighbour $\beta$ of $\alpha$ to a neighbour of $\varphi(\alpha)$.
Note that every element of $\varphi(\beta) G_{\varphi(\alpha)}$ has to be a neighbour of $\varphi(\alpha)$.
But then $d=|\beta G_\alpha|=|\varphi(\beta) G_{\varphi(\alpha)}|$ shows that $\varphi$ sends all neighbours of $\alpha$ to neighbours of $\varphi(\alpha)$.
Hence $\varphi$ defines a graph homomorphism $T_d \to T_{d'}$. But a graph morphism between trees that is bijective on the vertices has to be an isomorphism. This concludes the proof. 
\end{proof}

The same proof works for automorphism groups of bi-regular trees and in particular for $\aut^+(T_d)$, the index $2$ subgroup of $\aut(T_d)$ fixing the bipartition on the vertices of $T_d$. Another way to describe $\aut^+(T_d)$ is to say that it the subgroup generated by the vertex stabilizers in $G$.  It is well-known that $\aut^+(T_d)$ is the only non-compact, proper, open subgroup of $\aut(T_d)$ and that it is simple, see \cite{Tits1970}.

\begin{proof}[Proof of Theorem \ref{T5-regular}]
Set $G=\aut(T_5)$.
In what follows we assume, seeking contradiction, that $\Gamma$ is a minimal degree Cayley--Abels graph for $G$ and that the degree of $\Gamma$ is less than $5$.  Fix a vertex $\alpha_0$ of $\Gamma$ and write $B = G_{\alpha_0}$.

\smallskip

{\em Claim 1.}  The degree of $\Gamma$ is $4$ and the local action is $S_3$. The subgroup $\aut^+(T_5)$ of $G$ can not have a Cayley--Abels graph of degree less than 4.

{\em Proof.} Recall that $G$ does not have any non-trivial, compact, normal subgroups.
The local simple content of $G$ consists of the cyclic groups of order $2$ and $3$, so by Theorem \ref{thm:localsimplecontent} the degree of $\Gamma$ is equal to $4$. The local simple content of $\aut^+(T_5)$ is the same as the local simple content of $G$ and thus the degree of any Cayley--Abels graph is at least 4.  By Theorem \ref{thm:localsimplecontent},
 the cyclic groups of order $2$ and $3$ are composition factors of subgroups of point stabilizers of the local action. This is only possible if the local action is $S_3$ or $S_4$. However,
 \cite[Proposition 3.1]{Trofimov2007} implies that if the local action is $S_4$, then $\Gamma$ has to be a tree. That contradicts Lemma \ref{lem:uniqueness of trees}.

\smallskip

Let $\beta_1, \beta_2, \beta_3, \beta_4$ denote the neighbours of $\alpha_0$ such that $\beta_4$ is the neighbour that is fixed by $B$.

\smallskip

{\em Claim 2.}    The group $G$ has two orbits on the edges of $\Gamma$.  For $i=1, 2, 3, 4$ there exists $g_i \in G$
with $(\alpha_0,\beta_i)g_i=(\beta_i,\alpha_0)$.
Furthermore it is possible to choose $g_1, g_2$ and $g_3$ so that there exists $b \in B$ with $g_{i+1}=b^{-1}g_{i}b$ for $i=1,2$.

{\em Proof.} Choose an element $g_4\in G$ such that $\alpha_0 g=\beta_4$.  It is well-known that $G$ is unimodular and thus, by Lemma \ref{LSuborbits}, we see that $|\alpha_0 G_{\alpha_0 g_4}|=|(\alpha_0 g_4)G_{\alpha_0}|=|\beta_4 G_{\alpha_0}|=1$.  This implies that $\{\alpha_0,\beta_4\}$ is fixed by $G_{\beta_4}$. But also $\{\alpha_0,\beta_4 \}g_4$ is fixed by $g_4^{-1} G_{\alpha_0} g_4 = G_{\beta_4}$. Since there is only one fixed point in the local action, we get $\{\alpha_0,\beta_4 \}g_4 = \{\alpha_0,\beta_4\}$.

From this we also see that $G$ has two orbits on the edges, because if there did exist an element $g\in G$ such that $\{\alpha_0, \beta_4\}g=\{\alpha_0, \beta_1\}$ then either $g$ or $g_4g$ would fix $\alpha_0$ and take $\beta_4$ to $\beta_1$ contrary to assumptions.  We will say that the edges in the orbit $\{\alpha_0, \beta_4\}G$ are {\em red} and the edges in the other orbit $\{\alpha_0, \beta_1\}G$ are {\em blue}.    

Let now $h_1 \in G$ be such that $\alpha_0 h_1 = \beta_1$. Because the edge $\{\alpha_0, \beta_4\}h_1=\{\beta_1,\beta_4 h_1\}$ is fixed by $h_1^{-1}G_{\alpha_0}h_1=G_{\beta_1}$, the edge $\{\alpha_0, \beta_1\}h_1=\{\beta_1,\beta_1 h_1\}$ is not fixed by $G_{\beta_1}$. So there exists $h \in G_{\beta_1}$ with $\beta_1 h_1 h = \alpha_0$. We can now set $g_1 = h_1 h$, this element will satisfy $(\alpha_0,\beta_1)g_1 = (\beta_1,\alpha_0)$.

By Claim 1 there exists $b \in B$ with $\beta_1 b = \beta_2$ and $\beta_1 b^2 = \beta_3$. Clearly $g_i = b^{-i} g_1 b^i$ satisfies $(\alpha_0,\beta_i)g_i= (\alpha_0 b^{-i} g_1 b^i, \beta_i b^{-i} g_1 b^i) =(\beta_i,\alpha_0)$ for $i=1,2$.

\smallskip

{\em Claim 3.}    Write $A=\{g_1, g_2, g_3, g_4\}$ and $A'=\{g_1, g_2, g_3\}$.  Then $BA'B=BA'=A'B$.  It also follows that $Bg_4 B=Bg_4=g_4 B$.  Furthermore $G=\langle A, B\rangle=\langle g_1, g_4, B\rangle$, and the groups $\langle A', B\rangle=\langle g_1,B\rangle$ and $\langle g_4,B\rangle$ are compact open subgroups of $G$.

\smallskip

{\em Proof.}   Note that the set of vertices $\{\beta_1, \beta_2, \beta_3\}$ is invariant under $B$.
We show that $BA' = \{g \in G \mid \alpha_0 g \in \{\beta_1,\beta_2,\beta_3\} \}$. Indeed, if $\alpha_0 g= \beta_i$ then $g g_i^{-1}$ fixes $\alpha_0$ and thus $gg_i^{-1} \in B$ and $g\in Bg_i\subseteq BA'$.
This implies $BA' B \subseteq BA'$, and the direction $BA' \subseteq BA'B$ holds because $B$ contains the identity.
Note that invariance of blue edges under $G$ in particular implies that $BA'=(BA')^{-1}$.
We know that $g_i^2 \in B$ for $i=1,2,3$ and therefore $g_i B = g_i^{-1} B$ and $(BA')^{-1}=A'^{-1}B = A'B$.

The same kind of argument can be used to show that $Bg_4 B=Bg_4$.

Since one can move the vertex $\alpha_0$ to any of its neighbours by using an element from $A$, a standard graph theoretical argument shows that the group $\langle A, B\rangle$ acts transitively on $\Gamma$. As  $\langle A, B\rangle$ contains the vertex stabilizer $B$, we see that $G=\langle A, B\rangle$.  Denote by $\Gamma'$ the connected component of the graph $(\V\Gamma, \{\alpha_0, \beta_1\}G)$ (the graph with the same vertex set as $\Gamma$ but the red edges have been removed) that contains $\alpha_0$. The group $C=\langle A', B\rangle$ is an open subgroup of $G$ and acts transitively on the vertices of $\Gamma'$.  Thus $\Gamma'$ is a Cayley--Abels graph for $C$, and it has degree $3$.  But $C$ is an open subgroup (contains the open subgroup $B$)  and if $C$ is not compact then $C=G$ or $C=\aut^+(T_5)$.  By Claim 1 neither of these groups can have a Cayley--Abels graph of degree 3.  Thus $C$ must be compact.  

Note that $\langle g_4, B\rangle$ leaves the edge $\{\alpha_0, \beta_4\}$ in $\Gamma$ invariant and is equal to its stabilizer.  Hence $\langle g_4, B\rangle$ is a compact open subgroup of $G$.  

\smallskip

The next step is to relate the action of $G$ on $\Gamma$ to the action of $G$ on $T_5$.  To ease the presentation we will replace $T_5$ with its barycentric subdivision $T=T_{5, 2}$ and consider the action of $G$ on $T$ instead of the action on $T_5$.  This has the benefit that elements in $G$ that act like inversions on $T_5$ will now fix a vertex.  An element in $G$ that acts like a translation on $T_5$ will also act like a translation on $T$, but an element that does not act like a translation on $T_5$ will always fix a vertex in $T$.  In particular every compact subgroup of $G$ fixes some vertex in $T$. 

\smallskip

{\em Claim 4.}   The groups $C=\langle g_1, B\rangle$ and $D=\langle g_4, B\rangle$ both fix a vertex in $T$.   

The elements $g_1$ and $g_4$ do not fix a common vertex in $T$ and therefore $g_1g_4$ acts like a translation on $T$. Similarly, $g_2g_4$ and $g_3g_4$ are also translations.

{\em Proof.}  The groups $C$ and $D$ are both compact by Claim 3 and thus each of them fixes a vertex in $T$.

Let $\gamma_1$ denote a vertex in $T$ that is fixed by $C$ and let $\gamma_4$ denote a vertex that is fixed by $D$.  Suppose both $g_1$ and $g_4$ fix some vertex $\gamma$ in $T$.  Then $g_1$ fixes every vertex on the unique $s_1$-arc $P_1$ from $\gamma_1$ to $\gamma$ and $g_4$ fixes every vertex on the unique $s_4$-arc $P_4$ from $\gamma_4$ to $\gamma$.  But $B$ is contained in both $C$ and $D$, hence fixes both $\gamma_1$ and $\gamma_4$ and thus fixes every vertex in the unique $s_{14}$-arc $P_{14}$ from $\gamma_1$ to $\gamma_4$.
Since $T$ is a tree, $P_1$, $P_4$ and $P_{14}$ have a common vertex and that common vertex is fixed by $g_1, g_4$ and the group $B$. It is thus fixed by $G=\langle g_1, g_4, B\rangle = \aut(T)$ and we reached a contradiction. From Lemma~\ref{lem:product of elliptic tree automorphisms} it now follows that $g_1g_4$ acts like a translation on $T$.  The statements about $g_2g_4$ and $g_3g_4$ follow by symmetry.

\smallskip

{\em Claim 5.}   For $i=1, 2, 3$,  let $L_i$ be the translation axis of $t_i=g_ig_4$ and let $F$ be the fixed tree of $B$.   Then $L_1\cap L_2\cap L_3 \cap F$ contains at least one arc.

{\em Proof.}  Let $F_i$ be the fixed tree of $g_i$ for $i=1, 2, 3,4$. By Lemma \ref{lem:compact_acts_on_trees} a tree automorphism generates a subgroup with non-compact closure if and only if it is a translation.
Since $C$ is compact, it has a non-empty fixed tree, which is contained in the intersection $F \cap F_1 \cap F_2 \cap F_3$. In particular, this intersection is non-empty and $F_1 \cup F_2 \cup F_3$ is a tree.
The same argument shows that $F \cap F_4$ is non-empty.  In the last claim we showed that $F_i$ and $F_4$ are disjoint for $i=1,2,3$.   
 Thus also the $s$-arc from $F_1 \cup F_2 \cup F_3$ to $F_4$ has positive length. This $s$-arc is contained in $F$ because $F$ is connected and both $F \cap F_1 \cap F_2 \cap F_3$ and $F \cap F_4$ are non-empty.   Note also that by Lemma~\ref{lem:product of elliptic tree automorphisms} the  orientation of this arc fits with the orientation of all the translation axes.
 
 \smallskip 

{\em Claim 6.}   There is a vertex $\gamma$ in $L_1\cap L_2\cap L_3\cap F$ such that $\gamma t_1, \gamma t_2$ and $\gamma t_3$ are all different.  

{\em Proof.}  Suppose $\gamma$ is a vertex in $L_1\cap L_2\cap L_3\cap F$  and that $\gamma t_1=\gamma t_2$.  We know from Claim 2 that there is an element $b\in B$ such that $g_2=b^{-1}g_1b$ and $g_3=b^{-2}g_1b^2$.  Then $\gamma b = \gamma$ implies
$\gamma g_1g_4=\gamma t_1=\gamma t_2=\gamma g_2g_4=\gamma b^{-1}g_1bg_4=\gamma g_1bg_4$ and thus $\gamma g_1 b = \gamma g_1$. Hence $\gamma t_3=\gamma g_3g_4=\gamma b^{-2}g_1b^2g_4=\gamma g_1b^2g_4=\gamma g_1g_4=\gamma t_1$.  This shows that
$\gamma t_1=\gamma t_2=\gamma t_3 \in L_1 \cap L_2 \cap L_3$ and we see that if any two of the vertices $\gamma t_1, \gamma t_2$ and $\gamma t_3$ are equal, then all three are equal. In particular, they are contained in $L_1 \cap L_2 \cap L_3$.

Next we prove that if $\gamma t_1 = \gamma t_2 = \gamma t_3$ then $\gamma \in F$.
Let $g\in B$.  By Claim 3 there are $g', g''\in B$ with $g_4g=g'g_4$ and $g_1g'=g''g_i$ for some $i\in \{1, 2, 3\}$.  Recall that $\gamma g''=\gamma$ by definition of $F$.  Hence 
$$\gamma t_1g=\gamma g_1g_4g=\gamma g_1g'g_4=\gamma g''g_ig_4=\gamma g_ig_4=\gamma t_i=\gamma t_1.$$
Thus $\gamma t_1$ is fixed by $B$ and therefore $\gamma t_1\in L_1\cap L_2\cap L_3\cap F$.

However $t_i$ is a translation and $F$ is finite. Thus there exists a vertex $\gamma \in L_1\cap L_2\cap L_3\cap F$ such that $\gamma t_1\nin L_1\cap L_2\cap L_3\cap F$. By the above $\gamma t_1, \gamma t_2$ and $\gamma t_3$ must all be different.

\smallskip

{\em Claim 7.} (See Figure \ref{fig:halfplanes}.)  Let $\gamma$ be a vertex in $L_1\cap L_2\cap L_3\cap F$ such that $\gamma t_1, \gamma t_2$ and $\gamma t_3$ are all different.  
There is a unique vertex $\gamma'$ such that $(\gamma',\gamma)$ is an arc in $L_1\cap L_2\cap L_3$. 
Removing the edge $\{\gamma,\gamma'\}$ would divide $T$ into two connected components (``half-trees''); let $H$ be the component containing $\gamma$.
Set $R = \{t_1, t_2, t_3\}$ and write
$R^k = \{s_1\cdots s_k\mid s_1,\ldots, s_k\in \{t_1,t_2, t_3\}\}$ for $k \geq 0$. Then for every $r \in R^k$ with $k \geq 1$ the vertex $\gamma r$ is contained in exactly one of $H t_1, H t_2$ and $H t_3$.
If $r = r' t_j$ with $r' \in R^{k-1}$, then $\gamma r \in H t_j$.

\begin{figure}
    \begin{center}
    \scalebox{0.85}{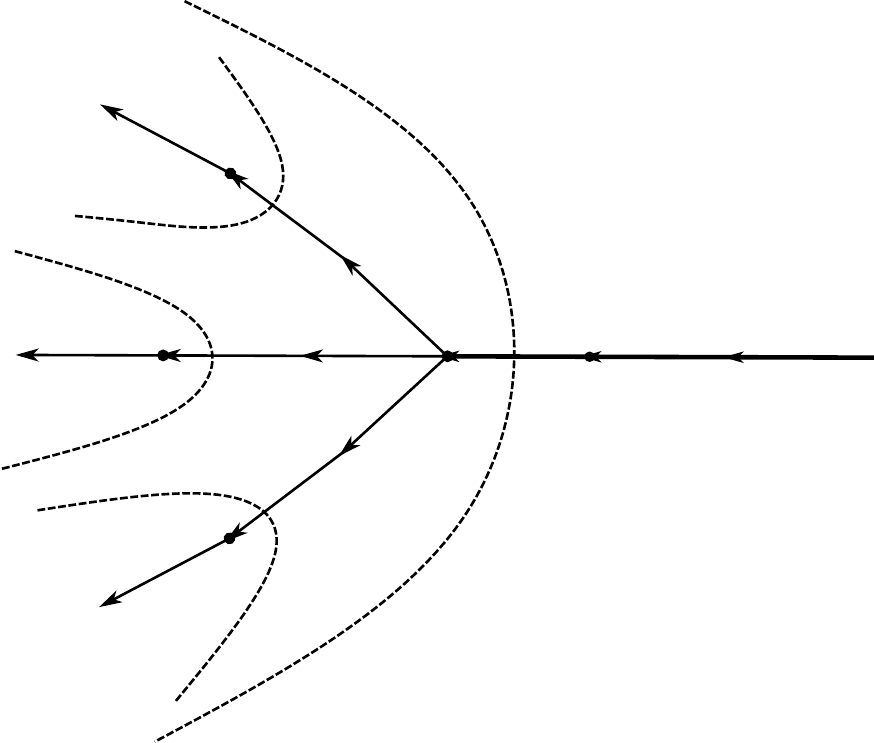}
    \end{center}
    \caption{The three translation axes splitting up}
     \label{fig:halfplanes}
\end{figure}

{\em Proof.} Recall that $L_1,L_2$ and $L_3$ are $2$-way infinite arcs. By abuse of notation we talk about the intersection $L_1 \cap L_2 \cap L_3$ as an $s$-arc for some $s \geq 1$.
Thus $\gamma r' \in H$.
 Since $\gamma t_1, \gamma t_2$ and $\gamma t_3$ are all different by assumption, the half-trees $H t_1$, $H t_2$ and $H t_3$ are all disjoint. For every $r' \in R^{k-1}$ and $i=1,2,3$ we have $H r' t_i \subseteq H t_i$. This proves the claim.
%


\smallskip

{\em Claim 8.}   For every $k\geq 0$ we have $\gamma R^k B = \gamma R^k$.

{\em Proof.} Note that $R^1 = A' g_4$. The equation $R^k B = B R^k$ follows directly from Claim 3. Now we are done since $\gamma \in F$.

\smallskip

{\em Conclusion of proof}.
Think of $T$ as a rooted tree with $\gamma$ as a root.  We will say that vertices $\alpha \neq \beta$ in $T$ are siblings if they have a common neighbour, and this common neighbour is closer to $\gamma$ than $\alpha$ and $\beta$. Since $B$ is open, there exists a finite subtree $F'$ of $T$ with $G_{(F')}\leq B$. It is possible to choose $F'$ such that the vertex set of $F'$ consists of a vertex $\delta$ and all vertices at distance at most $\ell$ from $\delta$ for some number $\ell$.  In particular for all vertices $\alpha \notin F'$ the orbit $\alpha B$ contains all the siblings of $\alpha$. The automorphisms $t_1, t_2, t_3$ are translations, so there is a number $k_0 \geq 0$ such that four vertices that are siblings of each other are contained in $\gamma R^{k_0}B = \gamma R^{k_0}$ (see Claim 8). We assume that $k_0$ is minimal.

 First we note that $k_0\neq 0$ because $R^0 = \{id\}$.  One can also exclude the possibility that $k_0=1$ since $\gamma R^1 = \{\gamma t_1, \gamma t_2, \gamma t_3\}$ has only three elements.
 Write $\gamma_1, \gamma_2, \gamma_3, \gamma_4\in \gamma R^{k_0}$ for the four siblings. Thus there are elements $p_1, p_2, p_3, p_4\in R^{k_0-1}$ and $s_1, s_2, s_3, s_4\in \{t_1, t_2, t_3\}$ such that $\gamma_i=\gamma p_i s_i$.    Since $k_0>1$ there is some $i\in \{1, 2, 3\}$ such that $\{\gamma_1, \gamma_2, \gamma_3, \gamma_4\}\subseteq H t_i$; here we are using that the $\gamma_i$ are siblings. Claim 7 implies that $s_1= s_2= s_3=s_4=t_i$.  Now $\gamma p_1, \gamma p_2, \gamma p_3, \gamma p_4$  are the images of the four siblings $\gamma_1, \gamma_2, \gamma_3, \gamma_4$ under the tree automorphism $t_i^{-1}$. In particular, because $k_0 \geq 2$, they are also siblings, and they are contained in $\gamma R^{k_0-1}$.  This means that $k_0$ was not minimal, and we have reached our final contradiction.

We have now shown that the assumption that $\aut(T_5)$ has a Cayley--Abels graph of degree less than 5 leads to a contradiction.  Thus $\md(\aut(T_5))=5$.  
\end{proof}

\bibliographystyle{abbrv}
\bibliography{references}

\end{document}